%% file: main_arxiv.tex
\documentclass{article}
\usepackage[margin=1in]{geometry}
\usepackage[T1]{fontenc}
\usepackage{graphicx}
\usepackage{amsmath,amssymb,amsfonts,amsthm}
\input{math_commands.tex}
\usepackage{booktabs}
\usepackage{xcolor}
\usepackage[colorlinks=true, urlcolor=blue, citecolor=blue, linkcolor=blue]{hyperref}
\usepackage{algorithm, algpseudocode}

\newtheorem{theorem}{Theorem}
\newtheorem{proposition}{Proposition}
\newtheorem{lemma}{Lemma}
\newtheorem{corollary}{Corollary}
\theoremstyle{remark}
\newtheorem{remark}{Remark}

\renewenvironment{proof}[1][Proof]{\par\noindent\textit{#1.}\ }{\par}

\newcommand{\email}[1]{\href{mailto:#1}{\nolinkurl{#1}}}
\newcommand{\keywords}[1]{%
  \par\medskip\noindent\textbf{Keywords: }%
  \begingroup\def\and{; }#1\endgroup}
\newcommand{\subclass}[1]{%
  \par\smallskip\noindent\textbf{Mathematics Subject Classification: }%
  \begingroup\def\and{, }#1\endgroup}

\begin{document}

\title{Coordinate Optimality Reformulation for Mixed-Integer Convex Programs with Indicators
\thanks{Simge K\"u\c{c}\"ukyavuz and Tong Xu are supported, in part, by ONR N00014-22-1-2602. Salar Fattahi is supported, in part, by NSF grants 2152776 and 2337776, and ONR grant N00014-26-1-2074. Andr\'es G\'omez is supported in part by ONR grant N000142612117 and AFOSR grant FA9550-24-1-0086.}
}

\author{
Tong Xu\thanks{Department of Industrial Engineering and Management Sciences, Northwestern University, Evanston, IL, USA. \email{tongxu2027@u.northwestern.edu}}
\and
Salar Fattahi\thanks{Department of Industrial and Operations Engineering, University of Michigan, Ann Arbor, MI, USA. \email{fattahi@umich.edu}}
\and
Andr{\'e}s G{\'o}mez\thanks{Daniel J. Epstein Department of Industrial and Systems Engineering, University of Southern California, Los Angeles, CA, USA. \email{gomezand@usc.edu}}
\and
Simge K\"u\c{c}\"ukyavuz\thanks{Department of Industrial Engineering and Management Sciences, Northwestern University, Evanston, IL, USA. \email{simge@northwestern.edu}}
}

\date{}

\maketitle

\begin{abstract}
We consider mixed-integer convex optimization problems in which binary indicators control continuous variables. We introduce the \emph{Coordinate Optimality Reformulation} (CORe) framework, which augments standard indicator formulations by incorporating coordinate-wise optimality information. The resulting reformulations preserve global optimality while substantially improving branch-and-bound performance, particularly in sparse and structured settings where the coordinate-wise optimality conditions expose exploitable problem structure. We first develop the main components of CORe, including coordinate-wise optimality conditions, closed-form characterizations, and disjunctive reformulations. We then demonstrate the framework across multiple problem families, including quadratic problems and robust single-index models. Computational experiments show that CORe can substantially improve solver performance compared with standard big-$M$ formulations. 

\keywords{mixed-integer convex optimization \and indicator constraints \and reformulation \and sparse optimization \and disjunctive optimization}
\subclass{90C11 \and 90C20}
\end{abstract}

\section{Introduction}
\label{intro}

We consider a class of mixed-integer convex optimization problems in which binary indicators control the activation of a subset of continuous variables. A canonical formulation takes the form
\begin{equation}
\label{eq:core_template}
\min_{x \in \mathbb{R}^n,\, y \in \mathbb{R}^m,\, z \in \{0,1\}^n}
\quad f(x,y) + \sum_{i=1}^n \lambda_i z_i
\quad \text{s.t.} \quad x_i (1 - z_i) = 0,\; i = 1,\dots,n, 
\end{equation}
where $f$ is convex. The binary variables $z_i, i = 1,\dots,n$ indicate whether the corresponding continuous variables $x_i, i = 1,\dots,n$ are active. When present (i.e., $m\geq 1$), $y$ denotes a collection of continuous variables that are not directly controlled by the indicators, and $\lambda\in\mathbb{R}_+^n$ is the given penalty vector. Throughout, we refer to the constraints $x_i(1 - z_i) = 0$ as indicator constraints. The indicator constraints can be linearly represented by $-M_iz_i \leq x_i\leq M_i z_i$, where $M_i$ is a sufficiently large constant (commonly referred to as a big-M constant) \cite{glover1975improved}.

Quadratic indicator models provide a particularly important special case of
\eqref{eq:core_template} and will serve as one of the main settings in this paper. In this case, $m=0$, and the model reduces to a convex quadratic program with indicators of the form
\begin{equation}
\label{eq:original_core}
\begin{aligned}
\min_{x\in\mathbb{R}^n,\; z\in\{0,1\}^n}\quad
& \frac12 x^\top Qx + c^\top x + \lambda^\top z \\
\text{s.t.}\quad
& -M_iz_i \leq x_i\leq M_i z_i, i=1,\ldots,n,
\end{aligned}
\end{equation}
where \(Q\succeq 0\) is the quadratic cost matrix and \(c\in\mathbb{R}^n\). Despite the convexity of the quadratic objective, the presence of indicator constraints renders the problem NP-hard \cite{chen2014complexity}.

The optimization model \eqref{eq:core_template} encompasses many problems, including learning probabilistic graphical models \cite{manzour2021integer,kucukyavuz2023consistent,JMLR:v26:24-1657,xu2025integer,fattahi2023solution,fattahi2021scalable}, gene regulatory networks \cite{ravikumar2025efficient}, portfolio optimization \cite{bienstock1996computational}, unit commitment \cite{frangioni2006perspective,nguyen2022mixed}, manufacturing \cite{akturk2009strong}, outlier detection \cite{zioutas2005deleting,gomez2021outlier,gomez2025outlier}, sparse regression \cite{bertsimas2016best,mazumder2023subset,dedieu2021learning}, and robust regression \cite{bertsimas2016best,hazimeh2022sparse,gomez2025outlier}. Big-M-constrained formulations are a common starting point for these applications. While broadly applicable, such formulations often provide limited information to the branch-and-bound algorithm: continuous relaxations may be weak, presolve may fail to identify the relevant activation regimes, and the solver may need to explore many suboptimal activation patterns. 

Existing approaches often resort to either strengthening continuous relaxation or developing specialized algorithms that exploit the structural properties of the problem. A major line of work strengthens indicator models through perspective reformulations and related convexification techniques \cite{ceria1999convex,frangioni2006perspective,gunluk2010perspective,atamturk2018strong,wei2022ideal}. Other approaches exploit additional structure in sparse learning and quadratic optimization with indicators, including cutting-plane and first-order methods, low-rank structure, and graph-induced structure in the quadratic matrix \cite{bertsimas2016best,hazimeh2020fast,mazumder2023subset,liu2023graph,bhathena2025parametric,bienstock2024solving}. While these methods can be highly effective within their target settings, they are tailored to specific problem structures. 

\begin{sloppypar}
In this paper, we introduce the \emph{Coordinate Optimality Reformulation} (CORe) framework, which provides a general reformulation principle based on coordinate-wise optimality that can be specialized to a broad range of indicator models, while still delivering partial benefits even in the absence of favorable structures. The key observation is that, under the canonical formulation \eqref{eq:core_template}, each pair $(x_i,z_i)$ defines a one-dimensional coordinate subproblem when the remaining variables are fixed. When this coordinate subproblem admits a tractable optimality characterization, the active and inactive regimes can be compared explicitly, resulting in additional constraints that can only strengthen the formulation. CORe systematically incorporates these comparisons into the mixed-integer formulation, yielding reformulations that preserve global optimality while eliminating activation patterns that cannot occur at any optimum. In doing so, CORe exposes structure that is invisible in the original formulation and can be exploited by presolve, branching, and cutting-plane routines.
\end{sloppypar}

\subsection{Contributions}

The main contributions of the paper are summarized below.
\begin{itemize}
    \item We introduce the \emph{Coordinate Optimality Reformulation} (CORe) framework for a class of mixed-integer convex optimization problems with indicator constraints. The framework has three components: an \emph{optimality-condition (OC) reformulation}, a \emph{closed-form (CF) reformulation}, and a \emph{disjunctive reformulation}. Together, these components incorporate coordinate-wise optimality conditions directly into the mixed-integer formulation in the form of constraints and make explicit a structure that standard formulations typically leave implicit in the continuous subproblem.

    \item We show that our framework applies across several structured model classes. In particular, for quadratic programs with indicators, the coordinate-wise comparison yields explicit threshold rules and disjunctive formulations. We further show that the same principle extends beyond quadratic objectives by applying it to robust single-index models---a class of problems in robust statistics. While the CORe conditions remain valid beyond these structured settings, the strongest computational benefits are expected when favorable underlying problem structures are present.

    \item We characterize special cases in which CORe sharply reduces the number of activation patterns that need to be considered. In convex quadratic problems whose sparsity graph is a star, the regimes of all leaf coordinates are controlled by the single continuous variable at the center, which leads to a linear-size bound on the number of patterns and an $O(n)$ bound on the size of the branch-and-bound tree. More generally, in quadratic problems with an unbalanced bipartite structure, in which the $n$ indicator-controlled variables interact only through $m$ additional continuous variables, the activation decisions are governed by a hyperplane arrangement in dimension $m$, yielding a polynomial $O(n^{m+1})$ bound on the number of branch-and-bound nodes for fixed $m$. An analogous $O(n^{d+1})$ bound holds for robust single-index models with $d$ covariates, a class that includes penalized least trimmed squares and robust logistic regression. In all these cases, the standard big-$M$ formulation can require exponentially many nodes. We further show that such guarantees do not follow from sparsity alone; even with CORe, path-structured instances can induce exponentially many branch-and-bound nodes.
    \item We show computationally that CORe can substantially outperform standard big-$M$ formulations and, in some settings, even specialized dynamic programming algorithms that exploit problem structure.
\end{itemize}

\subsection{Related Work}

\begin{sloppypar}
    \paragraph{General mixed-integer convex optimization.}
Indicator-constrained convex optimization is part of the broader literature on mixed-integer convex and nonlinear optimization. Classical algorithmic approaches include outer approximation and branch-and-bound for convex mixed-integer nonlinear programs (MINLPs) \cite{duran1986outer,quesada1992lp}, as well as hybrid branch-and-bound, branch-and-cut, and outer-approximation frameworks implemented in modern convex MINLP solvers \cite{bonami2008algorithmic,bonami2011algorithms}. Related mixed-integer conic and convex approaches strengthen relaxations through lift-and-project cuts, conic certificates, and extended polyhedral approximations \cite{kilincc2017lift,lubin2018polyhedral,coey2020outer,kilincc2016minimal,kilinckarzan2015twoterm}. These approaches typically reformulate or relax the original feasible set without excluding feasible integer solutions. CORe takes a different route: it adds constraints derived from coordinate-wise comparisons between active and inactive regimes. These constraints may remove feasible but suboptimal activation patterns, while preserving all globally optimal solutions.
\end{sloppypar}

\paragraph{Formulation strengthening for indicator variables.}
Perspective reformulations provide a powerful approach for mixed-integer convex problems with indicator variables.
In diagonal quadratic indicator models, perspective reformulation yields the ideal convex-hull description of the corresponding separable on-off quadratic epigraphs \cite{ceria1999convex,gunluk2010perspective}. The perspective strengthening approach has been extended to settings with nonseparable quadratic objectives, coupling constraints, and structured decompositions \cite{frangioni2006perspective,akturk2009strong,gunluk2010perspective,wei2020convexification,xie2020scalable,wei2022ideal}.  For nonseparable objectives, perspective-based approaches typically decompose or lift the quadratic terms so that separable on-off components can be convexified; the resulting relaxation depends on the chosen decomposition or extended formulation \cite{frangioni2007sdp,zheng2014improving,atamturk2018strong,frangioni2020decompositions}. Conic optimization perspectives have also been used to relate regularization and relaxation approaches in statistical variable selection \cite{dong2015regularization}. Other related developments include projected and approximated perspective reformulations, project-and-lift approaches, and dual-information strengthening for semi-continuous and nonseparable models \cite{frangioni2011projected,frangioni2016approximated,frangioni2017improving}.

\paragraph{Algorithms for sparse statistical learning.}
A separate line of work develops scalable algorithms for sparse statistical learning with indicator variables, including mixed-integer formulations for best subset selection and binary convex reformulations with cutting-plane algorithms for high-dimensional sparse regression \cite{bertsimas2016best,bertsimas2020sparseHD}. Complementary first-order and local-search approaches have been developed for $\ell_0$-regularized regression and classification, including coordinate descent, local combinatorial optimization, and specialized branch-and-bound methods whose node relaxations are solved by first-order algorithms \cite{hazimeh2020fast,hazimeh2022sparse,dedieu2021learning}. These methods exploit the structure of sparse statistical learning problems through specialized search, coordinate updates, or tailored relaxations. CORe uses a different mechanism: it extracts coordinate-wise information from the continuous subproblem and encodes it as constraints in a mixed-integer formulation.

\paragraph{Structure-exploiting methods for quadratic indicator problems.}
Another line of work studies convex quadratic indicator-constrained problems with special structure in the matrix \(Q\). The simplest case is diagonal \(Q\), where the continuous variables decouple and perspective reformulation yields the ideal convex-hull description of the feasible set \cite{ceria1999convex,gunluk2010perspective}. Beyond this fully separable setting, several papers identify classes of matrices for which strong structural results are available. When \(Q\) is Stieltjes or closely related to an \(M\)-matrix, the sign structure of the quadratic couplings can be exploited to derive strong convex formulations and, under additional assumptions, polynomial-time solvability results \cite{atamturk2018strong,liu2025polyhedral}. Rank-one quadratic structure has also been exploited to derive strong lifted relaxations for sparse regression \cite{atamturk2025rank}. Related pairwise convexification approaches study \(2\times 2\) quadratic substructures with indicators \cite{han20232}. Likewise, if \(Q\) admits a sparse factorization \(Q = Q_0^\top Q_0\), polynomial-time solvability can be obtained under appropriate assumptions on the factor structure \cite{del2020subset}. 

A particularly active line of research exploits the sparsity graph of \(Q\), i.e., the graph whose adjacency matrix is given by $Q$. When the sparsity graph of $Q$ is a star, i.e., there exists a center node $r \in [n]:=\{1,\ldots,n\}$ such that $Q_{ij} = 0$ for all $i, j \in [n] \setminus \{r\}$ with $i \neq j$, then the resulting problem is known to be polynomial-time solvable following a parametric method \cite{bhathena2025parametric}. When \(Q\) is tridiagonal, the associated sparsity graph is a path, and dynamic-programming methods can solve the problem to optimality in \(O(n^2)\) time and memory \cite{liu2023graph}. For banded matrices, decision-diagram and approximation approaches exploit temporal or local dependence to obtain tractable algorithms under fixed structural parameters \cite{gomez2024real}. 
When the sparsity graph of $Q$ has a tree structure, exact parametric dynamic programming algorithms solve the problem in \(O(n^2)\) time and memory \cite{bhathena2025parametric}. More recent work extends this parametric viewpoint to structured graphs whose complexity depends on treewidth, volume growth, and related graph parameters \cite{bhathena2026solvingconvexquadraticoptimization}. Related work on structured sparsity under indicator conditions develops approximation algorithms and complexity results for broader block-structured quadratic settings \cite{bienstock2024solving}. More broadly, sparsity and decomposition ideas based on graph structure have played an important role in polynomial and semidefinite optimization \cite{waki2006sums,madani2017finding,bienstock2018lp}.

Taken together, this literature shows that strong computational guarantees are achievable when the problem has a highly exploitable structure. These methods are inherently ad hoc, each tailored to a specific problem class (e.g., quadratic programs from a sparsity graph family). CORe takes a different approach: rather than exploiting global structure, it derives optimality implications from one-dimensional coordinate subproblems, yielding a unified reformulation principle that specializes naturally across model classes.

\paragraph{Disjunctive and geometric formulations.}
CORe also has a geometric and disjunctive interpretation: coordinate optimality conditions partition the continuous space into regions associated with activation regimes. This relates to classical disjunctive programming and convex-hull formulations \cite{balas1998disjunctive,grossmann2003generalized}, as well as to strong formulations for disjunctive constraints and unions of polyhedra \cite{vielma2015formulation,huchette2019combinatorial} and geometric constructions for strong MIP formulations \cite{huchette2019geometric}. While these works are not directly concerned with coordinate optimality, they offer a useful perspective on the regime partitions induced by CORe.

In summary, the related literature provides algorithmic and modeling tools for specific classes of indicator problems, while CORe contributes a unifying reformulation principle grounded in coordinate-wise optimality.

\subsection{Outline}

The remainder of the paper is organized as follows. Section~\ref{sec:cor_framework} introduces the general CORe framework and formalizes the coordinate-optimality logic underlying our approach. Section~\ref{sec:applications_QP} develops CORe for quadratic programs with indicators, including general sparse quadratic models and models with unbalanced bipartite structure. Section~\ref{sec:single_index} extends the same perspective to a class of robust single-index models, which includes penalized least trimmed squares and robust logistic regression as special cases, and establishes a polynomial bound on the size of the branch-and-bound tree when the number of covariates is fixed. Section~\ref{sec:experiments} presents computational experiments. We conclude in Section~\ref{sec:conclusion}.

\section{The Coordinate Optimality Reformulation (CORe) Framework}
\label{sec:cor_framework}

Recall the mixed-integer convex optimization problem
\begin{equation*}
\begin{aligned}
\min_{x \in \mathbb{R}^n,\; y \in \mathbb{R}^m,\; z \in \{0,1\}^n}
\quad
& f(x,y) + \sum_{i=1}^n \lambda_i z_i \\
\text{s.t.}\quad
& x_i(1-z_i)=0,\qquad i=1,\ldots,n.
\end{aligned}
\end{equation*}
For a vector \(x\in\mathbb{R}^n\) and an index \(i\), let \(x_{-i}\) denote all
coordinates of \(x\) except \(x_i\). For fixed \((x_{-i},y)\), define
\[
f_i(\xi;x_{-i},y)
:=
f(x_1,\ldots,x_{i-1},\xi,x_{i+1},\ldots,x_n,y).
\]
The coordinate-restricted subproblem in \((x_i,z_i)\) is thus defined as
\begin{equation}\label{eq::coordinate-subprob}
    \min_{x_i\in\mathbb{R},\; z_i\in\{0,1\}}
\quad
f_i(x_i;x_{-i},y)+\lambda_i z_i
\quad
\text{s.t.}\quad
x_i(1-z_i)=0.
\end{equation}
The optimal solution to this problem can be determined by comparing two regimes:

\medskip
\noindent
\textbf{Off-regime \((z_i=0)\).}
Feasibility forces \(\xi=0\), giving the value
\[
V_i^{\mathrm{off}}(x_{-i},y)
:=
f_i(0;x_{-i},y).
\]

\medskip
\noindent
\textbf{On-regime \((z_i=1)\).}
The coordinate is free, giving the value
\[
V_i^{\mathrm{on}}(x_{-i},y)
:=
\min_{x_i\in\mathbb{R}} f_i(x_i;x_{-i},y)+\lambda_i .
\]
When the infimum is attained, we denote the set of coordinate minimizers by
\[
X_i^\star(x_{-i},y)
:=
\arg\min_{x_i\in\mathbb{R}} f_i(x_i;x_{-i},y).
\]

\begin{proposition}[Coordinate optimality principle]
\label{prop:cor_principle}
Let \((x^\star,y^\star,z^\star)\) be a globally optimal solution to
\eqref{eq:core_template}. Then, for every coordinate \(i\),
\begin{equation*}
    \begin{cases}
    z_i^\star=0
& \quad\Longrightarrow\quad x_i^\star=0
\quad\text{and}\quad
V_i^{\mathrm{off}}(x_{-i}^\star,y^\star)
\le
V_i^{\mathrm{on}}(x_{-i}^\star,y^\star),\\
z_i^\star=1
& \quad\Longrightarrow\quad
x_i^\star\in X_i^\star(x_{-i}^\star,y^\star)
\quad\text{and}\quad
V_i^{\mathrm{on}}(x_{-i}^\star,y^\star)
\le
V_i^{\mathrm{off}}(x_{-i}^\star,y^\star).
\end{cases}
\end{equation*}

\end{proposition}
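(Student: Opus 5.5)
The plan is an exchange argument. Any modification of the single pair \((x_i,z_i)\) that keeps \((x^\star_{-i},y^\star,z^\star_{-i})\) fixed and respects \(x_i(1-z_i)=0\) produces a feasible point of \eqref{eq:core_template}. By global optimality, such a point cannot have a strictly smaller objective. Moreover, the objective of that point equals \(f_i(x_i;x^\star_{-i},y^\star)+\lambda_i z_i\) plus the constant \(\sum_{j\ne i}\lambda_j z_j^\star\). Consequently, \((x_i^\star,z_i^\star)\) must solve the coordinate subproblem \eqref{eq::coordinate-subprob} with \((x_{-i},y)=(x^\star_{-i},y^\star)\). Both cases then reduce to comparing this minimal value with \(V_i^{\mathrm{off}}\) and \(V_i^{\mathrm{on}}\).

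\textbf{Case \(z_i^\star=0\).} Feasibility forces \(x_i^\star=0\), so the objective contribution of coordinate \(i\) is exactly \(V_i^{\mathrm{off}}(x^\star_{-i},y^\star)\). Suppose, for contradiction, that \(V_i^{\mathrm{on}}<V_i^{\mathrm{off}}\). Since \(V_i^{\mathrm{on}}\) is an infimum, there exists \(\xi\) with \(f_i(\xi;x^\star_{-i},y^\star)+\lambda_i<V_i^{\mathrm{off}}\). We may use such a \(\xi\) even if the infimum is not attained. Setting \(x_i=\xi\) and \(z_i=1\) is feasible and strictly improves the objective, which contradicts optimality. Hence \(V_i^{\mathrm{off}}\le V_i^{\mathrm{on}}\).

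\textbf{Case \(z_i^\star=1\).} The pair \((\xi,1)\) is feasible for every \(\xi\in\mathbb{R}\). Optimality therefore gives \(f_i(x_i^\star)\le f_i(\xi)\) for all \(\xi\). Thus the infimum defining \(V_i^{\mathrm{on}}\) is attained at \(x_i^\star\), so \(x_i^\star\in X_i^\star(x^\star_{-i},y^\star)\), and \(V_i^{\mathrm{on}}=f_i(x_i^\star)+\lambda_i\). Comparing with the feasible alternative \((0,0)\), whose coordinate contribution is \(V_i^{\mathrm{off}}\), gives \(V_i^{\mathrm{on}}\le V_i^{\mathrm{off}}\).

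No step is a real obstacle. The only care needed is the attainment issue: in the off case I compare against an \(\varepsilon\)-minimizer rather than an argmin, and in the on case attainment follows from optimality itself. Finiteness of \(f\) at the optimum is implicit in the assumption that a global optimum exists.
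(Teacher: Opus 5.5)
Your proof is correct and follows the same single-coordinate exchange argument as the paper. You handle attainment slightly more carefully than the paper does: the paper's proof tacitly assumes that an on-regime minimizer exists, while you use a strictly improving $\xi$ in the off case and obtain attainment at $x_i^\star$ directly from optimality in the on case.
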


\begin{proof}
Fix a coordinate \(i\) and keep \((x_{-i}^\star,y^\star)\) fixed. If
\(z_i^\star=0\), then feasibility implies \(x_i^\star=0\). If
\[
V_i^{\mathrm{on}}(x_{-i}^\star,y^\star)
<
V_i^{\mathrm{off}}(x_{-i}^\star,y^\star),
\]
then switching only the \(i\)th coordinate to an optimal on-regime solution would
strictly decrease the objective while preserving feasibility, contradicting global
optimality. Therefore
\[
V_i^{\mathrm{off}}(x_{-i}^\star,y^\star)
\le
V_i^{\mathrm{on}}(x_{-i}^\star,y^\star).
\]

If \(z_i^\star=1\), then \(x_i^\star\) must minimize
\(f_i(\cdot;x_{-i}^\star,y^\star)\). Otherwise, replacing \(x_i^\star\) by a
coordinate minimizer would strictly decrease the objective while keeping the same
value of \(z_i^\star\), again contradicting global optimality. Hence
\(x_i^\star\in X_i^\star(x_{-i}^\star,y^\star)\). Finally, if
\[
V_i^{\mathrm{off}}(x_{-i}^\star,y^\star)
<
V_i^{\mathrm{on}}(x_{-i}^\star,y^\star),
\]
then switching to the off-regime \(x_i=0,z_i=0\) would strictly improve the
objective, which is impossible at a global optimum. This proves the result.\qed
\end{proof}

\begin{sloppypar}
    Proposition~\ref{prop:cor_principle} shows that every globally optimal solution
satisfies a coordinate-wise consistency condition. CORe incorporates these necessary optimality conditions into the mixed-integer formulation. The resulting constraints may be implicit at this level of generality, but in the structured models considered later, they reduce to explicit linear constraints, closed-form coordinate updates, or finite disjunctions.
\end{sloppypar}

\subsection{The Optimality Condition (OC) Component}

The OC component of CORe uses the optimality condition
of the coordinate subproblem \eqref{eq::coordinate-subprob}. For each coordinate \(i\), define the off-regime set
\[
\begin{aligned}
\mathcal D_{i, \mathrm{OC}}^{0}
:=
\Bigl\{(x,y,z):\;& z_i=0,\; x_i=0,\\
& f_i(0;x_{-i},y)
\le
f_i(\xi;x_{-i},y)+\lambda_i
\quad \forall \xi\in\mathbb{R}
\Bigr\},
\end{aligned}
\]
and the on-regime set
\[
\begin{aligned}
\mathcal D_{i, \mathrm{OC}}^{1}
:=
\Bigl\{(x,y,z):\;& z_i=1,\;
0\in \partial_{x_i} f_i(x_i;x_{-i},y),\\
& f_i(x_i;x_{-i},y)+\lambda_i
\le
f_i(0;x_{-i},y)
\Bigr\}.
\end{aligned}
\]
The off-regime set enforces that \(x_i=0\) and that switching coordinate \(i\) on
cannot improve the objective. The on-regime set enforces coordinate-wise stationarity
and that the on-regime is no worse than the off-regime.

The OC component of CORe is therefore the disjunctive constraint
\[
(x,y,z)\in
\mathcal D_{i, \mathrm{OC}}^{0}\cup \mathcal D_{i, \mathrm{OC}}^{1},
\qquad i=1,\ldots,n.
\]
This yields the following CORe formulation:
\begin{equation}
\label{eq:core_oc_formulation}
\begin{aligned}
\min_{x,y,z}\quad
& f(x,y)+\sum_{i=1}^n \lambda_i z_i\\
\text{s.t.}\quad
& (x,y,z)\in
\mathcal D_{i, \mathrm{OC}}^{0}\cup \mathcal D_{i, \mathrm{OC}}^{1},
\qquad i=1,\ldots,n,\\
& z\in\{0,1\}^n .
\end{aligned}
\end{equation}
By Proposition~\ref{prop:cor_principle}, formulation \eqref{eq:core_oc_formulation} preserves all globally optimal solutions of
\eqref{eq:core_template}. It may remove feasible but suboptimal points from the original formulation, but only those that violate the coordinate-wise optimality condition.

\subsection{The Closed-Form (CF) Component}

In many structured models, the optimality condition $0\in \partial_{x_i} f_i(x_i;x_{-i},y)$ yields an explicit minimizer for $x_i$ in terms of the remaining variables $(x_{-i}, y)$, that is, a closed-form function $x_i^\star: \mathbb{R}^{n-1}\times \mathbb{R}^m\to \mathbb{R}$ satisfying
\[
x_i^\star(x_{-i},y)
\in
\arg\min_{\xi\in\mathbb{R}} f_i(\xi;x_{-i},y).
\]
When such an expression is available, the on-regime value reduces to
\[
V_i^{\mathrm{on}}(x_{-i},y)
=
f_i(x_i^\star(x_{-i},y);x_{-i},y)+\lambda_i.
\]
The OC condition can then be combined with the closed-form coordinate minimizer, leading to the following off- and on-regime sets: 

\[
\begin{aligned}
\mathcal D_{i, \mathrm{CF}}^{0}
:=
\Bigl\{(x,y,z):\;& z_i=0,\; x_i=0,\\
& f_i(0;x_{-i},y)
\le
f_i(x_i^\star(x_{-i},y);x_{-i},y)+\lambda_i
\Bigr\},
\end{aligned}
\]
and
\[
\begin{aligned}
\mathcal D_{i, \mathrm{CF}}^{1}
:=
\Bigl\{(x,y,z):\;& z_i=1,\;
x_i=x_i^\star(x_{-i},y),\\
& f_i(x_i^\star(x_{-i},y);x_{-i},y)+\lambda_i
\le
f_i(0;x_{-i},y)
\Bigr\}.
\end{aligned}
\]
The resulting CF disjunction is
\[
(x,y,z)\in \mathcal D_{i, \mathrm{CF}}^{0}\cup \mathcal D_{i, \mathrm{CF}}^{1},
\qquad i=1,\ldots,n,
\]
and the corresponding CORe formulation is
\begin{equation}
\label{eq:core_oc_cf_formulation}
\begin{aligned}
\min_{x,y,z}\quad
& f(x,y)+\sum_{i=1}^n \lambda_i z_i\\
\text{s.t.}\quad
& (x,y,z)\in
\mathcal D_{i, \mathrm{CF}}^{0}\cup \mathcal D_{i, \mathrm{CF}}^{1},
\qquad i=1,\ldots,n,\\
& z\in\{0,1\}^n .
\end{aligned}
\end{equation}
Compared with the OC formulation, the CF component replaces the implicit
first-order condition by an explicit equality \(x_i=x_i^\star(x_{-i},y)\).

\subsection{The Disjunctive Reformulation Component}

In this section, we review how the disjunctive constraint in \eqref{eq:core_oc_formulation} or \eqref{eq:core_oc_cf_formulation} can be formulated in practice under certain conditions. To simplify notation, we write \(\mathcal D_i^0\) and \(\mathcal D_i^1\) to denote the two regime sets for coordinate \(i\), whether they arise from the OC formulation \((\mathcal D_{i,\mathrm{OC}}^0,\mathcal D_{i,\mathrm{OC}}^1)\) or from the CF formulation \((\mathcal D_{i,\mathrm{CF}}^0,\mathcal D_{i,\mathrm{CF}}^1)\). The disjunctive construction applies to either case.

Suppose $\mathcal D_i^{j}$ for $j=0,1$ admits a polyhedral representation $\mathcal D_i^{j}=\{(x,y,z): A_i^j (x,y)\le \mathbf b_i^j\}$, for constraint matrix $A_i^j$ and right-hand side vector $\mathbf b_i$ of appropriate dimension for $i=1,\dots,n,j=0,1$. Then, from disjunctive programming and the union of polyhedra \cite{balas1998disjunctive}, the disjunctive constraint can be equivalently represented by introducing two copies of vector $(x,y,z)$ as $(x^j,y^j,z^j), j=0,1$ and linear constraints \cite{balas1998disjunctive}

\[
\begin{aligned}
x=x^0+x^1,\\
y=y^0+y^1,\\
z^0+z^1=1,\\
z=z^1,\\
A_i^0x^0\le b_i^0z^0,\\
A_i^1x^1\le b_i^1z^1.\\
\end{aligned}
\]

In many applications, the on-regime with $z=1$ can be further partitioned into several subregimes. In this case, the disjunctive formulation can be extended similarly. Furthermore, if the sets defining the disjunctions are convex rather than polyhedral, then the union of convex sets may be represented similarly under certain conditions \cite{ceria1999convex}.

Overall, CORe provides a modeling framework for incorporating coordinate-wise optimality into indicator formulations. It starts with the coordinate subproblem, identifies regimes consistent with global optimality, and encodes these regimes via optimality-condition, closed-form, or disjunctive reformulations. The construction preserves all globally optimal solutions of the original model, while possibly removing feasible points that violate these necessary conditions. In the next section, we specify CORe reformulations of several classes of problems that have attracted attention in recent literature.

\begin{remark}[Role of variables $y$]
The CORe framework does not impose structural restrictions on the
variables $y$ beyond those required for convexity.
Additional convex constraints on $y$ can be incorporated without affecting
the validity of the coordinate-wise optimality conditions,
since these conditions are derived by fixing $(x_{-i},y)$
and comparing the active and inactive states of $x_i$.
Thus, the framework applies equally to models in which $y$ is constrained by general convex sets. In settings where the minimization over $y$ admits a closed-form minimizer for fixed $x$, one may further eliminate or explicitly characterize $y$ and substitute this expression into the formulation.
Such elimination is complementary to CORe and can yield additional enhancements, but it is not required for the validity of the framework.
\end{remark}

We next turn from the general CORe framework to concrete problem classes, demonstrating its effectiveness across a range of settings.

\section{CORe for Quadratic Programs with Indicators}
\label{sec:applications_QP}
In this section, we focus on quadratic programs with indicator variables, where the coordinate-wise structure is especially transparent. After fixing the remaining variables, each indicator-controlled coordinate reduces to a one-dimensional convex quadratic subproblem. Therefore, the active and inactive regimes can be compared in closed form, yielding tractable OC, CF, and disjunctive components of CORe.

We first derive the CORe reformulation from a coordinate-wise threshold rule. We then analyze structured sparse cases, such as when the sparsity pattern of the Hessian matrix corresponds to path and star graphs, to illustrate how the sparsity of the induced graph affects the worst-case behavior of branch-and-bound. Finally, we turn to quadratic programs with unbalanced bipartite structure, in which many indicator-controlled variables are coupled only through a few continuous variables, and show that CORe yields a polynomial bound on the size of the branch-and-bound tree.

\subsection{General Quadratic Programs with Indicators}
\label{subsec:quadratic}

Recall the mixed-integer convex quadratic program \eqref{eq:original_core}:
\begin{equation*}
\begin{aligned}
\min_{x\in\mathbb{R}^n,\; z\in\{0,1\}^n}\quad
& \tfrac12 x^\top Q x + c^\top x + \lambda^\top z \\
\text{s.t.}\quad
& -M_iz_i \leq x_i\leq M_i z_i, i=1,\ldots,n,
\end{aligned}
\end{equation*}
where $Q\succeq 0$, $Q_{ii}>0$ for all $i$, $\lambda_i\ge 0$, and $M_i$ is the big-M constant.

To apply CORe, fix a coordinate $i$ and treat all other variables $x_{-i}$ as given.
Define 
\begin{equation}
\label{eq:gdef_cor}
g_i := c_i + \sum_{j\neq i} Q_{ij} x_j .
\end{equation}
Eliminating the terms not involving $x_i$, the problem reduces to the one-dimensional problem
\begin{equation*}
\min_{x_i\in\mathbb{R},\, z_i\in\{0,1\}}
\;\frac12 Q_{ii}x_i^2 + g_i x_i + \lambda_i z_i
\quad
\text{s.t. } x_i(1-z_i)=0.
\end{equation*}

\paragraph{Optimality condition reformulation.}

The off-regime \(z_i=0\) forces \(x_i=0\), and therefore has value $V_i^{\mathrm{off}}(x_{-i})=0.$
In the on-regime \(z_i=1\), the coordinate \(x_i\) is free. Since \(Q_{ii}>0\), the one-dimensional quadratic function is minimized at $x_i^\star=-\frac{g_i}{Q_{ii}},$
with value $V_i^{\mathrm{on}}(x_{-i})
=
-\frac{g_i^2}{2Q_{ii}}+\lambda_i.$
Hence, the coordinate regime comparison gives
\[
V_i^{\mathrm{off}}(x_{-i})\le V_i^{\mathrm{on}}(x_{-i})
\quad\Longleftrightarrow\quad
|g_i|\le \tau_i,
\qquad
\tau_i:=\sqrt{2\lambda_i Q_{ii}}.
\]
Therefore, every globally optimal solution must satisfy the OC regime-selection logic
\[
z_i=0 \;\Longrightarrow\; |g_i|\le \tau_i,
\qquad
z_i=1 \;\Longrightarrow\; |g_i|\ge \tau_i.
\]
Here, the on-regime can be further divided into two subregimes: $g_i\ge \tau_i$ and $g_i\le -\tau_i$. Consequently, the OC  component induces the threshold regimes
\begin{align*}
    \mathcal D_{i, \mathrm{OC}}^0
&=
\{(x,y,z):\; z_i=0,\; x_i=0,\; |g_i|\le \tau_i\},\\
\mathcal D_{i, \mathrm{OC}}^{1}&=\mathcal D_{i, \mathrm{OC}}^{+}\cup \mathcal D_{i, \mathrm{OC}}^{-}, \mbox{where}\\
\mathcal D_{i, \mathrm{OC}}^{+}
&:=
\{(x,y,z):\; z_i=1,\; g_i\ge \tau_i\},\\
\mathcal D_{i, \mathrm{OC}}^{-}
&:=
\{(x,y,z):\; z_i=1,\; g_i\le -\tau_i\}.
\end{align*}

The OC component rules out activation patterns that are inconsistent with the
coordinate-wise regime comparison, but it does not yet enforce the coordinate
minimizer in the on-regime.

\paragraph{Closed-form reformulation.}

Because the active coordinate minimizer is available in closed form, we can further
refine the on-regimes. In the on-regime, coordinate-wise optimality requires $x_i=-\frac{g_i}{Q_{ii}},$
or equivalently, $Q_{ii}x_i+g_i=0.$
Adding this closed-form equality to the two active OC regimes yields the CF
disjunction
\begin{align*}
\mathcal D_{i, \mathrm{CF}}^0
&:= \{(x,y,z):\; z_i=0,\; x_i=0,\; |g_i|\le \tau_i\}, \\
\mathcal D_{i, \mathrm{CF}}^+
&:= \{(x,y,z):\; z_i=1,\; Q_{ii}x_i+g_i=0,\; g_i\ge \tau_i\}, \\
\mathcal D_{i, \mathrm{CF}}^-
&:= \{(x,y,z):\; z_i=1,\; Q_{ii}x_i+g_i=0,\; g_i\le -\tau_i\}.
\end{align*}
Hence every global minimizer of \eqref{eq:original_core} lies in
\[
\bigcap_{i=1}^n
\left(
\mathcal D_{i, \mathrm{CF}}^0
\;\cup\;
\mathcal D_{i, \mathrm{CF}}^+
\;\cup\;
\mathcal D_{i, \mathrm{CF}}^-
\right).
\]

\paragraph{Disjunctive reformulation.}

We next encode the three-term disjunction
\(\mathcal{D}_{i, \mathrm{CF}}^0 \cup \mathcal{D}_{i, \mathrm{CF}}^+ \cup \mathcal{D}_{i, \mathrm{CF}}^-\).
Introduce binary variables \(z_i^+,z_i^- \in \{0,1\}\), where
\(z_i^+\) and \(z_i^-\) indicate the positive- and negative-threshold regimes, $\mathcal{D}_{i, \mathrm{CF}}^+$ and $\mathcal{D}_{i, \mathrm{CF}}^-$,  respectively, and set $z_i = z_i^+ + z_i^- .$
The off-regime is therefore represented by \(z_i=0\). We also introduce disaggregated variables $(0,g_i^0), (x_i^+,g_i^+), (x_i^-,g_i^-),$
with aggregate relations
\[
x_i = x_i^+ + x_i^-,
\qquad
g_i = g_i^0 + g_i^+ + g_i^- .
\]
After intersecting each branch with the original bounds \(-M_iz_i \le x_i \le M_iz_i\), the resulting bounded disjunction can be encoded by the following constraints:
\begin{align}
&-M_iz^+_i \leq x^+_i \leq 0,\qquad  0 \leq x^-_i \leq M_iz^-_i \label{eq:bounds_x_sign_core_Q}\\
&  -\tau_i (1-z_i) \le g_i^0 \le \tau_i (1-z_i), \label{eq:h0_cor_Q}\\
& Q_{ii}x_i^+ + g_i^+ = 0, \qquad \tau_i z_i^+ \le g_i^+, \label{eq:h+_cor_Q}\\
& Q_{ii}x_i^- + g_i^- = 0, \qquad g_i^- \le -\tau_i z_i^-. \label{eq:h-_cor_Q}
\end{align}
Combining the above constraints with the coupling definition
\eqref{eq:gdef_cor} yields the CORe-enhanced formulation:
\begin{equation}
\label{eq:reformulated_core_general_QP}
\begin{aligned}
\min\quad
& \tfrac12 x^\top Q x + c^\top x
+ \lambda^\top z \\
\text{s.t.}\quad
& -M_iz_i \leq x_i\leq M_i z_i ,\qquad i=1,\ldots,n,\\
& g_i = c_i + \sum_{j\neq i} Q_{ij}x_j, \qquad i=1,\ldots,n,\\
& x_i = x_i^+ + x_i^-, \qquad i=1,\ldots,n,\\
& g_i = g_i^0 + g_i^+ + g_i^-, \qquad i=1,\ldots,n,\\
& z_i = z_i^+ + z_i^-, \qquad i=1,\ldots,n,\\
& \eqref{eq:bounds_x_sign_core_Q}-\eqref{eq:h-_cor_Q}, \qquad i=1,\ldots,n,\\
& z_i^+,z_i^- \in \{0,1\}, \qquad i=1,\ldots,n,.
\end{aligned}
\end{equation}

Formulation~\eqref{eq:reformulated_core_general_QP} is an instantiation of the general CORe framework of Section~\ref{sec:cor_framework}, where the CF component together with the disjunctive reformulation leads to tractable {\it linear} constraints. For each coordinate, constraints
\eqref{eq:bounds_x_sign_core_Q}--\eqref{eq:h-_cor_Q} give the standard extended
convex-hull formulation of the bounded three-term disjunction
\(\mathcal D_{i, \mathrm{CF}}^0 \cup \mathcal D_{i, \mathrm{CF}}^+ \cup \mathcal D_{i, \mathrm{CF}}^-\), where boundedness is
provided by the imposed bounds on \(x_i\) and the branch equalities
\(Q_{ii}x_i^\pm+g_i^\pm=0\). Thus, the formulation incorporates the local convex-hull descriptions of the CORe-induced disjunctions into the original indicator model.

\subsubsection{Role of Graph Sparsity}
The coordinate-wise conditions used by CORe are local with respect to the sparsity
graph of \(Q\), defined as an undirected graph \(G_Q=(V,E)\), where $V:=\{1,\dots,n\}$ and \((i,j)\in E\) if and only if \(Q_{ij}\neq 0\) for \(i\neq j\). For the quadratic model, the coordinate subproblem for variable
\(i\) depends on the other variables only through
\[
g_i(x):=c_i+\sum_{j\in N(i)}Q_{ij}x_j,
\]
where \(N(i)\) is the set of neighbors of \(i\) in $G_Q$. Thus the
activation regime of coordinate \(i\) is determined only by variables adjacent to
\(i\) in $G_Q$.

This locality can reduce the number of possible regime patterns when many
coordinates are controlled by a small set of variables. Suppose \(\mathcal I\) and \(\mathcal J\) are
sets of coordinates such that \(N(i)\subseteq \mathcal J\) for every \(i\in \mathcal I\). Then the
regimes of all coordinates in \(\mathcal I\) are determined by the hyperplanes
\[
c_i+\sum_{j\in \mathcal J}Q_{ij}x_j=\pm \tau_i,\qquad i\in \mathcal I,
\]
in the lower-dimensional space of the variables \(x_\mathcal J:=(x_j)_{j\in \mathcal J}\), where \(\tau_i\) is the
activation threshold for coordinate \(i\). Hence, a classical hyperplane-arrangement bound  says that $2|\mathcal I|$ affine hyperplanes in $\mathbb{R}^{|\mathcal J|}$
 divide the space into at most
\[
\sum_{\ell=0}^{|\mathcal J|}\binom{2|\mathcal I|}{\ell}
\]
distinct regime patterns \cite{zaslavsky1975facing}. 
In particular, if \(|\mathcal J|\) is fixed, this number is polynomial in \(|\mathcal I|\), and if
\(|\mathcal J|=1\), it is linear. 

This intuitively explains why sparse structures, such as star or bipartite sparsity graphs, can be favorable for CORe.
In a star graph, all leaf regimes are controlled by the center node $r$, yielding $\mathcal J=\{r\}$ and $\mathcal I = [n]\backslash\{r\}$. Therefore, the
leaf thresholds form a one-dimensional arrangement on this centered variable with only \(O(n)\) intervals. Note that a star graph may be viewed as a bipartite graph with one node (center) in one partition and the remaining nodes in the other partition. More generally, in an unbalanced bipartite graph, the regimes on the larger partition are controlled by a low-dimensional hyperplane arrangement defined over the smaller partition. Sparsity alone, however, is not sufficient: in a path graph, coordinate conditions are governed by overlapping local neighborhoods rather than by a single small set of variables. Thus, the computational benefit of sparsity stems from low-dimensional control over many local regimes, not merely from having few nonzero entries in $Q$. These structures are studied in detail in Sections~\ref{subsec:structured_sparse_qp} and~\ref{subsec:bipartite_qp}, which explain the substantial computational improvements observed in sparse instances while admitting difficult sparse cases in the worst case.

\subsubsection{Role of Regularization}

There are two notions of sparsity at play for mixed-integer convex quadratic programs with indicators: one is that of the sparsity of the Hessian matrix $Q$, and the other is that of the solution sparsity modeled with the regularizer involving the indicator variables.  
In the quadratic case, the CORe activation threshold is \(\tau_i=\sqrt{2\lambda_i Q_{ii}}\). Thus, increasing
\(\lambda_i\) enlarges the off-regime region \(|g_i|\le \tau_i\), making it easier
to certify coordinates as inactive. This does not change the objective relative to the
original formulation or alter the set of global optima; rather, it makes the coordinate-wise optimality conditions more informative precisely when the optimal solution is sparser. Consequently, CORe tends to benefit more from stronger regularization than
the original big-\(M\) formulation, which only captures the larger penalty through the
objective and does not explicitly encode the threshold rule. This distinction will become evident in the experiments presented in Section~\ref{sec:experiments}.

\subsection{Structured Sparse Quadratic Programs with Indicators}
\label{subsec:structured_sparse_qp}

The CORe formulation derived in Section~\ref{subsec:quadratic} is independent of the particular sparsity graph of \(Q\). However, the effect of sparsity on the strength of the CORe formulation has not yet been quantified. In this section, we address this question by analyzing two special classes of sparsity patterns: star- and path-structured problems. Both belong to the broader class of tree-structured problems and both are known to be solvable in polynomial time~\cite{liu2023graph}. Despite this shared tractability, we show that their behavior within a branch-and-bound framework can differ dramatically when the CORe formulation is used. In particular, for star-structured problems, the size of the branch-and-bound tree grows only linearly with the problem dimension, whereas for path-structured problems, it can grow exponentially in the worst case.

\subsubsection{Star Graph}

We show that when the sparsity graph of $Q$ is a star, the CORe formulation admits an $O(n)$ bound on the size of the branch-and-bound tree. This is a substantial improvement over the big-M formulation, whose tree can have
$O(2^n)$ nodes.

\begin{proposition}
\label{prop:star_graph}
Suppose the sparsity graph of $Q$ is a star with a center node $r$. Then the number of feasible CORe regime patterns over the leaf coordinates is $O(n)$. Consequently, a branching algorithm that branches on unresolved disjunctions $\mathcal D_{i, \mathrm{CF}}^0 \cup \mathcal D_{i, \mathrm{CF}}^+ \cup \mathcal D_{i, \mathrm{CF}}^-$ for each $i \ne r$ in the fixed order $i=1,\dots,n$ terminates after $O(n)$ nodes. 

\end{proposition}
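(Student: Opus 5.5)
In a star with center $r$, every leaf $i\neq r$ has $N(i)=\{r\}$, so $g_i=c_i+Q_{ir}x_r$ is an affine function of the single scalar $x_r$. The plan is to show that each CORe regime of leaf $i$ corresponds to an interval of $x_r$, and that the leaf regime pattern is determined by which cell of a one-dimensional arrangement contains $x_r$. If $Q_{ir}=0$ the leaf is isolated: its regime is determined by $c_i$ alone, so at most one regime of $\mathcal D_{i,\mathrm{CF}}^0\cup\mathcal D_{i,\mathrm{CF}}^+\cup\mathcal D_{i,\mathrm{CF}}^-$ is consistent with coordinate optimality, up to ties at $|c_i|=\tau_i$. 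If $Q_{ir}\neq 0$, the conditions $|g_i|\le\tau_i$, $g_i\ge\tau_i$ and $g_i\le-\tau_i$ become three intervals in $x_r$, with breakpoints $t_i^\pm:=(\pm\tau_i-c_i)/Q_{ir}$.

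\textbf{Step 1: counting patterns.} Collect all breakpoints $\{t_i^\pm\}_{i\neq r}$, at most $2(n-1)$ points on the real line. They partition $\mathbb R$ into at most $4n-3$ cells: $2n-1$ open intervals and $2n-2$ points. Within a fixed cell, each leaf's regime set is fixed, except that at a breakpoint two adjacent regimes coincide in their $g_i$ condition. Any feasible CORe point must satisfy one of the three disjunct conditions for every leaf, evaluated at its value of $x_r$. Hence its leaf pattern (the vector of selected regimes) is compatible with the cell containing $x_r$. To avoid double counting ties, I will count patterns as the set of patterns realizable on the closures of the open intervals. Each closed interval determines a unique pattern, because on its interior every leaf's regime is uniquely fixed. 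At a breakpoint the admissible patterns are the union of those of the two adjacent intervals, unless several leaves share the same breakpoint. In that case the number of ties can blow up combinatorially, which is the main obstacle. I would handle it either by a genericity assumption or by noting that the branch-and-bound count only needs the number of regions of the arrangement, which is $\sum_{\ell\le 1}\binom{2(n-1)}{\ell}=2n-1$ by the hyperplane-arrangement bound already cited, with closed cells. In either case the bound is $O(n)$.

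\textbf{Step 2: bounding the branching tree.} Branch on leaves in the fixed order. A node at depth $k$ fixes regimes for leaves $1,\dots,k$, and its feasible set in $x_r$ is the intersection of the chosen intervals, which is again an interval. The node is pruned when this interval is empty, since the LP over the extended formulation \eqref{eq:reformulated_core_general_QP} restricted to that branch is then infeasible in $x_r$. This requires arguing that fixing $z_i^\pm$ forces exactly the corresponding linear constraint on $g_i$, which follows from \eqref{eq:h0_cor_Q}--\eqref{eq:h-_cor_Q}. So the surviving nodes at depth $k$ correspond to nonempty intersections. These are cells of the arrangement generated by the $2k$ breakpoints of the first $k$ leaves, so there are at most $4k+1$ of them. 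Summing over depths would give $O(n^2)$, so a sharper count is needed. Each surviving node at depth $k$ has at most one child whose interval differs from its own, in the sense of being split by the two new breakpoints. More precisely, the three children of a node partition its interval into at most three pieces, and a child survives only if its piece is nonempty. Along depth, the number of surviving nodes grows by at most $2$ per level, since only the cells hit by $t_k^\pm$ split. Even so, this gives $\sum_k O(k)=O(n^2)$ in total.

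To reach $O(n)$, I will instead note that the leaves of the tree (full patterns) number $O(n)$ by Step 1. Every internal node that survives has at least one surviving child. A tree with $L$ leaves has at most $L-1$ branching internal nodes, but unary chains can occur. The total node count is then $O(n)$ only if unary chains are counted as single branching events, which is what the statement's ``nodes'' must mean, namely the nodes where branching actually splits the region. If the literal count includes unary nodes, I would argue that a disjunction already resolved by the current $x_r$-interval is not ``unresolved'' and is not branched on. Then every branching node has at least two surviving children, giving at most $2L-1=O(n)$ nodes. I expect this interpretation of ``unresolved'', together with the tie handling at shared breakpoints, to be the delicate part.
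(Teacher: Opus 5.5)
Your argument follows the paper's route. Each leaf's disjunction becomes three intervals in $x_r$ via $g_i=c_i+Q_{ir}x_r$. The $2(n-1)$ breakpoints give $2n-1$ cells and hence $O(n)$ leaf patterns, and the tree is bounded through its number of leaves.

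On the tree count you are more careful than the paper. Its proof bounds internal nodes by feasible leaves using only that every intermediate node has a feasible child. That does not suffice: branching on every leaf in the fixed order can produce $\sum_k(2k+1)=\Theta(n^2)$ nodes, so your level-by-level count is tight. Your reading of ``unresolved'' is the correct fix: branch on leaf $i$ only if the current interval meets at least two of the three regime intervals of leaf $i$. Then every branching node has at least two feasible children. Also add that such a node has at most one pruned child, since the statement counts all nodes, not only feasible ones.

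On ties, keep only your first remedy. Counting the $2n-1$ closed cells does not help, because cells do not bound patterns. At a breakpoint shared by several leaves, each leaf may independently sit in either adjacent closed regime, and this really occurs in the CORe formulation. For every leaf take $Q_{ii}=Q_{ir}=1$, $c_i=0$, $\lambda_i=\tfrac12$ (so $\tau_i=1$). For the center take $Q_{rr}=n$, $\lambda_r$ small and $c_r=k-n$, and let $M_i\ge 1$ for all $i$. Set $x_r=1$, put any $k$ leaves at $x_i=-1$ in $\mathcal D^+_{i,\mathrm{CF}}$ and the rest at $0$ in $\mathcal D^0_{i,\mathrm{CF}}$. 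This point is feasible for \eqref{eq:reformulated_core_general_QP}: the center has $g_r=-n$, hence $Q_{rr}x_r+g_r=0$ and $g_r\le-\tau_r$. That gives $\binom{n-1}{k}$ feasible leaf patterns, exponential for $k=\lfloor (n-1)/2\rfloor$.

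So a genericity hypothesis (pairwise distinct breakpoints, or bounded multiplicity) is genuinely required. The paper's proof uses it tacitly when it asserts that on each interval the regime of every leaf is fixed.
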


\begin{proof}
\label{sec:proof_star}

Suppose the sparsity graph of \(Q\) is a star on the vertex set \([n]\) with center node \(r\), and let \( [n]\setminus\{r\}\) denote the set of graph leaves.
For each leaf $i \neq r$, the only neighbor of \(i\) is the center node \(r\). Hence we have
\[
g_i = c_i + Q_{ir} x_r,
\]
so the disjunction $\mathcal D_{i, \mathrm{CF}}^0 \cup \mathcal D_{i, \mathrm{CF}}^+ \cup \mathcal D_{i, \mathrm{CF}}^-$ depends only on the scalar variable $x_r$. The three regimes correspond to the intervals
$$
\begin{aligned}
S_i^- &:= \{x_r : c_i + Q_{ir} x_r \le -\tau_i\}, \\
S_i^0 &:= \{x_r : |c_i + Q_{ir} x_r| \le \tau_i\}, \\
S_i^+ &:= \{x_r : c_i + Q_{ir} x_r \ge \tau_i\}.
\end{aligned}
$$

Suppose, without loss of generality, that we branch on the disjunctions  $\mathcal D_{i, \mathrm{CF}}^0 \cup \mathcal D_{i, \mathrm{CF}}^+ \cup \mathcal D_{i, \mathrm{CF}}^-$
 in the order $i=1,\ldots,n$ for $i\ne r$. Each disjunction induces at most two breakpoints in $x_r$, given by $c_i + Q_{ir} x_r = \pm \tau_i$. Hence all disjunctions taken together at the leaf nodes of the branch-and-bound tree partition $\mathbb{R}$ into at most $2(n-1)+1 = O(n)$ intervals. On each such interval, the regime of every leaf is fixed, and therefore all variables $x_i$, $i \neq r$, are uniquely determined (either zero or affine in $x_r$). Thus, each interval corresponds to a feasible terminal node, and the number of feasible branch-and-bound leaves is at most $L = O(n)$.

Next, consider the structure of the branch-and-bound tree. Each node representing a disjunction for an index $i$ corresponds to a feasible interval $I:=[L,U] \subseteq \mathbb{R}$ for $x_r$ determined by the intersection of the resolved disjunctions at this stage of the branch-and-bound search that lead to a feasible subproblem, i.e., $I=\cap_{j=1}^{i-1}S_j^{k}$ where $k\in\{+,-,0\}$ depending on the branching sequence until that node. Branching on a coordinate $i$ replaces $I$ by the three sets
\begin{equation*}
    \begin{aligned}
    I^- &:= I \cap S_i^-= \{x_r\in[L,U] : c_i + Q_{ir} x_r \le -\tau_i\}, \\
I^0 &:= I \cap S_i^0= \{x_r\in[L,U] : |c_i + Q_{ir} x_r| \le \tau_i\}, \\
I^+ &:= I \cap S_i^+= \{x_r\in[L,U]  : c_i + Q_{ir} x_r \ge \tau_i\}.
\end{aligned}
\end{equation*}

Since $S_i^-, S_i^0, S_i^+$ are three consecutive intervals covering $\mathbb{R}$, their intersections with $I$ form at most three consecutive subintervals of $I$ given by $I_i^-, I_i^0, I_i^+$, at least one of which is a feasible interval.

Let $N_{\mathrm{leaf}}^{\mathrm{feas}}$ denote the number of feasible leaves of the final branch-and-bound tree. Then $N_{\mathrm{leaf}}^{\mathrm{feas}} \le O(n).$
Since every intermediate node has at least one feasible child, the number of non-leaf nodes is at most $N_{\mathrm{leaf}}^{\mathrm{feas}}  = O(n)$. Moreover, since each non-leaf node generates at most two children with infeasible intervals, the number of infeasible leaves is also at most $O(n)$.

Combining these bounds, the total number of nodes in the branch-and-bound tree satisfies $N_{\mathrm{total}} = O(n).$ \qed
\end{proof}

The linear bound in Proposition~\ref{prop:star_graph} applies to the tailored regime-branching scheme analyzed above, rather than to the standard branch-and-bound procedure used by commercial solvers. Nevertheless, in the computational experiments of Section~\ref{sec:experiments}, we solve the CORe formulation using Gurobi with its default binary-variable branching strategy. The tested star-graph instances are essentially solved at the root node, with negligible root gaps and only a small number of nodes explored, even for problems of size \(n=10^4\). Moreover, in these instances, CORe is faster than the specialized dynamic-programming benchmark, suggesting that the coordinate-wise reformulation allows a general-purpose solver to exploit the same low-dimensional structure with very little branching overhead.

\begin{remark}
The three-way disjunction in the proposition statement can also be implemented using standard binary-variable branching with a specialized branching rule. Once a coordinate $i$ is selected, one may first branch on $z_i^+$. The branch $z_i^+=1$ fixes $z_i^-=0$ and $z_i=1$, and therefore corresponds to $\mathcal D_i^+$. On the other branch $z_i^+=0$, one then branches on $z_i^-$. The child node $z_i^-=0$ corresponds to $\mathcal D_i^0$, while the child node $z_i^-=1$ corresponds to $\mathcal D_i^-$. Thus, this two-step binary branching rule partitions the feasible region in the same way as the three-way disjunction, up to a constant factor in the number of branch-and-bound nodes. The key requirement for the counting argument is that, once coordinate $i$ is selected, it is fully resolved into one of the three regimes $\mathcal D_i^+$, $\mathcal D_i^0$, or $\mathcal D_i^-$.
\end{remark}

\subsubsection{Path Graph} 
Our next proposition shows that, when the sparsity graph of $Q$ is a connected path, the size of the branch-and-bound tree may grow exponentially in $n$. This in turn implies that Proposition~\ref{prop:star_graph} cannot be extended to general trees.
\begin{proposition}
\label{prop:path_counterexample_exp}
There exists a family of instances of \eqref{eq:original_core} on path graphs for which the number of feasible integer leaves of the branch-and-bound tree applied to the CORe formulation is exponential in \(n\).

\end{proposition}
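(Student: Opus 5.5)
The plan is to construct an explicit family of tridiagonal instances in which exponentially many activation patterns $z\in\{0,1\}^n$ remain feasible for the CORe formulation \eqref{eq:reformulated_core_general_QP}. Concretely, we will exhibit $2^{\Omega(n)}$ distinct points $(x,z)$ that each satisfy every CF disjunction $\mathcal D_{i,\mathrm{CF}}^0\cup\mathcal D_{i,\mathrm{CF}}^+\cup\mathcal D_{i,\mathrm{CF}}^-$ together with the big-$M$ bounds. Each such point is a coordinate-wise stationary, regime-consistent solution, and it is certified only by a node whose integer variables are fully fixed to it. A branch-and-bound tree that branches on the regime disjunctions (or on $z_i^\pm$, as in the remark after Proposition~\ref{prop:star_graph}) and prunes only by infeasibility must therefore contain a distinct feasible integer leaf for each such pattern. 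We will note explicitly that this is the same node-counting model as in Proposition~\ref{prop:star_graph}, so it shows the linear bound fails for paths.

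To make the construction transparent, we use a block design. Take $n=2k$ and split the path into consecutive pairs $(2j-1,2j)$. Choose $Q$ tridiagonal with diagonal entries $a>0$, intra-pair off-diagonal $b$, and inter-pair off-diagonal $\varepsilon$ (small, possibly zero). With $\varepsilon>0$ the sparsity graph is a connected path, and $Q\succeq0$ holds by diagonal dominance if $a>|b|+\varepsilon$. Choose $c$ and $\lambda$ so that the two-variable gadget on each pair has two coordinate-wise consistent configurations. In the first, $z=(1,0)$ with $x_{2j-1}=-c_{2j-1}/a$ and $|c_{2j}+bx_{2j-1}|\le\tau_{2j}$. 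In the second, $z=(0,1)$ symmetrically. The conditions for each configuration become a few strict scalar inequalities. For example, take $c_{2j-1}=c_{2j}=-\gamma$ with $|\gamma|>\tau$ (each coordinate wants to turn on alone), and $b$ of the same sign as needed so that $|\gamma-b\gamma/a|<\tau$ (the other coordinate is suppressed once one is on). Because both conditions are strict, they survive perturbation.

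The main obstacle is the coupling through $\varepsilon$. Each $g_i$ gains a term $\varepsilon x_{i\pm1}$ from the neighboring pair, so the gadget's regime conditions must remain valid simultaneously for every combination of neighbor states. Since all $|x_i|\le \gamma/a$ in the constructed points, it suffices to pick $\varepsilon$ smaller than the slack in the strict inequalities divided by $2\gamma/a$. For the big-$M$ bounds, we take $M_i\ge\gamma/a$. Then every one of the $2^{k}=2^{n/2}$ choices of gadget configurations gives a point feasible for all CF disjunctions.

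A subtlety is that the values $x$ in a combined configuration are not exactly $-c_i/a$ once $\varepsilon\neq0$. We will handle this by solving the linear system $Q_{ii}x_i+g_i=0$ on the active set $S$. Its matrix is the principal submatrix $Q_{SS}$, which is diagonally dominant and near $aI$, so its solution is within $O(\varepsilon)$ of the decoupled one, and the inequalities still hold. Finally, since the patterns are pairwise distinct and each is feasible, any complete enumeration reaches exponentially many feasible leaves. This gives the claim, and the argument is independent of which of these patterns is optimal.
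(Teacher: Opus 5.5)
Your construction is correct, but it takes a different route from the paper's proof.

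\textbf{The paper's route.} The paper uses a single uniform path instance: $Q_{ii}=1$, $Q_{i,i+1}=\tfrac12$, $c_i=-1$, $\tau\in(\tfrac12,1)$. It maps every \emph{maximal independent set} $S$ of $P_n$ to a feasible leaf via $x=\mathbf 1_S$. Because $S$ is independent, $Q_{SS}=I$, so the closed-form equalities hold exactly and no perturbation is needed. The count then comes from the known recurrence $a_n=a_{n-2}+a_{n-3}$, giving growth $\approx 1.3247^n$.

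\textbf{Your route.} Your pair gadget uses the same local mechanism. With $a=1$, $b=\tfrac12$, $\gamma=1$ it reproduces $g=-1$ at the active node and $g=-\tfrac12$ at its suppressed partner. Where you differ is the counting: you replace the combinatorial lemma with a product structure. The count $2^{n/2}$ is immediate and even has a larger base ($\sqrt2\approx1.414$).

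\textbf{What your route costs.} Your active sets are not independent: when pair $j$ picks $2j$ and pair $j+1$ picks $2j+1$, those two nodes are adjacent through the $\varepsilon$-edge. So you genuinely need the $Q_{SS}$ solve and the slack argument. That argument does go through:
\begin{itemize}
\item $Q_{SS}$ has only $\varepsilon$ off-diagonals, since exactly one node per pair is active.
\item Hence $x_i\in\{\gamma/a,\ \gamma/(a+\varepsilon)\}$.
\item Every shift in $g_i$ is below $2\varepsilon\gamma/a$.
\end{itemize}
You also need two-scale weights, and either even $n$ or a padding node.

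The paper's uniform instance has a further advantage: it directly motivates the adversarial experiments, where the margin $\tau-(1-\rho)$ controls difficulty. To make your write-up airtight, fix concrete constants, for example $a=1$, $b=\tfrac12$, $\gamma=1$, $\tau=\tfrac34$, $\varepsilon<\tfrac18$. One checks that these satisfy diagonal dominance and every regime inequality.
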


The proof of Proposition~\ref{prop:path_counterexample_exp} is provided in Appendix~\ref{sec:proof_counter}. Despite this negative worst-case result, we note that it does not reflect the typical behavior observed in our experiments. On the random path-graph instances in Section~\ref{sec:experiments}, CORe solves all tested instances essentially at the root node, with zero root gap and roughly one explored node on average up to \(n=10^4\), and is faster than the specialized dynamic-programming benchmark on the reported instances. Thus, although Proposition~\ref{prop:path_counterexample_exp} rules out a uniform polynomial bound for branch-and-bound on all path graphs, the CORe formulation remains highly effective on the structured random instances considered in our computational study.

\subsection{Quadratic Programs with Unbalanced Bipartite Structure}
\label{subsec:bipartite_qp}

In this section, we study mixed-integer convex quadratic programs in which a large number of indicator-controlled variables interact only through a small number of additional continuous variables. Consider
\begin{subequations}
\label{problem:low_rank_general}
    \begin{align}
        \min_{x\in \mathbb{R}^n ,y\in \mathbb{R}^m, z\in\{0, 1\}^n} \quad
        & \frac{1}{2} x^{\top} D x+\frac{1}{2} y^{\top} G y
        + x^{\top} F y + a^\top x + b^\top y +\lambda^\top z \label{eq:obj}\\
        \text{s.t.}\quad
        & -M_iz_i \leq x_i\leq M_i z_i, \qquad i=1,\ldots,n,
    \end{align}
\end{subequations}
where \(D\in \mathbb{R}^{n\times n}\) is diagonal with positive diagonal entries, \(G\in \mathbb{R}^{m\times m}\), \(F\in \mathbb{R}^{n \times m}\) with \(m\ll n\), and
\[
    \begin{pmatrix}
        D & F\\
        F^\top & G
    \end{pmatrix}
    \succeq 0.
\]
We refer to \eqref{problem:low_rank_general} as an \emph{unbalanced bipartite} formulation: the sparsity graph of the quadratic form in \((x,y)\) is bipartite with partitions of size \(n\) and \(m\), so the \(n\) indicator-controlled coordinates \(x_i\) interact only through the \(m\) continuous variables \(y\). Note that \eqref{problem:low_rank_general} generalizes the star case, as it reduces precisely to this case when $m=1$. In Appendix~\ref{sec:poly-algo}, we show that the bipartite structure can be exploited by a geometric pruning algorithm with \(O(n^{m+1})\) time and memory complexity for fixed \(m\).

We apply CORe to problem~\eqref{problem:low_rank_general}. Fix an index $i$ and treat
$(x_{-i},y)$ as given. Since $D$ is diagonal, the objective terms involving $x_i$
reduce to a one-dimensional convex quadratic. Define the scalar
\[
g_i(y) := F_i y + a_i,
\qquad
d_i := D_{ii}>0,
\]
where $F_i$ denotes the $i$th row of $F$.
Up to terms independent of $(x_i,z_i)$, the coordinate-restricted subproblem is
\begin{equation*}
\min_{x_i\in\mathbb{R},\, z_i\in\{0,1\}}
\ \frac12 d_i x_i^2 + g_i(y)\,x_i + \lambda_i z_i
\quad \text{s.t.}\quad x_i(1-z_i)=0.
\end{equation*}

\paragraph{Optimality condition reformulation.}
The off-regime $z_i=0$ forces $x_i=0$, hence $V_i^{\mathrm{off}}(y)=0.$
In the on-regime $z_i=1$, the unconstrained minimizer is $x_i^\star(y)= -\frac{g_i(y)}{d_i},$
and the optimal on-regime value equals
\[
V_i^{\mathrm{on}}(y)
=\min_{x_i\in\mathbb{R}}\Big(\frac12 d_i x_i^2 + g_i(y)\,x_i\Big)+\lambda_i
= -\frac{g_i(y)^2}{2d_i}+\lambda_i.
\]
Therefore, the regime comparison $V_i^{\mathrm{off}}(y)\le V_i^{\mathrm{on}}(y)$ is equivalent to the
threshold condition
\begin{equation*}
|g_i(y)| \le \tau_i,
\qquad
\tau_i := \sqrt{2d_i\lambda_i}.
\end{equation*}
In other words, every globally optimal solution satisfies the implications
\begin{equation*}
z_i=0 \ \Rightarrow\ |g_i(y)|\le \tau_i,
\qquad
z_i=1 \ \Rightarrow\ |g_i(y)|\ge \tau_i,
\end{equation*}
which eliminates regime assignments inconsistent with coordinate-wise optimality.

\paragraph{Closed-form reformulation.}
Since $x_i^\star(y)$ is available in closed form, CORe additionally yields

\begin{equation}
\label{eq:cf_imp_lowrank}
z_i=1 \ \Rightarrow\ x_i = -\frac{g_i(y)}{d_i},
\qquad
z_i=0 \ \Rightarrow\ x_i=0.
\end{equation}
Imposing \eqref{eq:cf_imp_lowrank} tightens the on-regime by enforcing the
coordinate-wise minimizer whenever $z_i=1$.
For fixed $x$, the minimization over $y$ also admits the closed-form solution $y(x) = -G^{-1}(F^\top x + b)$; consistent with the remark on the role of the $y$ variables in Section~\ref{sec:cor_framework}, we do not substitute this expression into the formulation, since retaining $y$ is precisely what keeps the coordinate couplings low-dimensional.

\paragraph{Disjunctive reformulation.}

For each coordinate $i$, the OC threshold
$|g_i(y)| \le \tau_i$ and the CF mapping
$x_i = -g_i(y)/d_i$ induce the following three-term disjunction:
\[
|g_i(y)| \le \tau_i,\ x_i=0
\quad \vee \quad
g_i(y) \ge \tau_i,\ x_i=-\frac{g_i(y)}{d_i}
\quad \vee \quad
g_i(y) \le -\tau_i,\ x_i=-\frac{g_i(y)}{d_i}.
\]

To obtain a bounded disjunctive formulation, suppose that valid bounds
on the affine quantities $g_i(y)=F_i y+a_i$ are available:
\[
L_i \le g_i(y) \le U_i, \qquad i=1,\dots,n.
\]
If no finite bound is available for a given coordinate, the corresponding bound may be omitted, equivalently taking $L_i=-\infty$ or $U_i=\infty$.

Introduce binary variables $z_i^+, z_i^- \in \{0,1\}$ indicating the two on-regimes, and set $z_i = z_i^+ + z_i^-$ together with $z_i \le 1$, so that the off-regime corresponds to $z_i = 0$. We disaggregate $g_i$ as $t_i = g_i^0 + g_i^+ + g_i^-$, with one copy per regime; since $x_i = 0$ in the off-regime, it suffices to disaggregate $x_i = x_i^+ + x_i^-$. Intersecting each regime with the bounds $L_i \le g_i(y) \le U_i$ and $-M_i z_i \le x_i \le M_i z_i$ yields the regime-local constraints
\begin{align*}
-M_iz^+_i \leq x^+_i \leq 0,\qquad & 0 \leq x^-_i \leq M_iz^-_i, \\
-\tau_i (1-z_i) \le g_i^0 \le \tau_i (1-z_i), \qquad & \\
\tau_i z_i^+ \le g_i^+ \le U_i z_i^+, \qquad & x_i^+ = -\frac{1}{d_i}g_i^+,\\
L_i z_i^- \le g_i^- \le -\tau_i z_i^-, \qquad & x_i^- = -\frac{1}{d_i}g_i^-.
\end{align*}

Applying this construction for all $i$ yields a CORe-enhanced formulation
that tightly encodes the coordinate-wise optimality structure
of problem~\eqref{problem:low_rank_general}:

\begin{subequations}\label{prob:cor_low_rank}
	\begin{align}
    \min_{\substack{y\in\mathbb{R}^m\\
    z\in [0,1]^n,\, z^+,z^-\in\{0,1\}^n\\
    g^0, g^+, g^- \in \mathbb{R}^n\\
    x^+, x^-, x \in \mathbb{R}^n}}  & \frac{1}{2} x^\top D x + \frac{1}{2} y^\top G y + x^\top F y + a^\top x + b^\top y + \lambda^\top z\\
		\text{s.t. }
         &-M_iz_i \leq x_i \leq M_iz_i, \ i=1,\dots,n,\\
        & z_i = z_i^+ + z_i^-, \ i=1,\dots,n,\\
        & F_i y + a_i = g_i^0 + g_i^+ + g_i^-, \ i=1,\dots,n,\\
        & x_i = x_i^+ + x_i^-, \ i=1,\dots,n,\\
        &-M_iz^+_i \leq x^+_i \leq 0,\;  0 \leq x^-_i \leq M_iz^-_i, \ i=1,\dots,n,\\
        & -\tau_i (1-z_i) \le g_i^0 \le \tau_i (1-z_i),  \ i=1,\dots,n,\\
& \tau_i z_i^+ \le g_i^+ \le U_i z_i^+, \ x_i^+ = -\frac{1}{D_{ii}}\, g_i^+, \ i=1,\dots,n,\\
& L_i z_i^- \le g_i^- \le -\tau_i z_i^-, \ x_i^- = -\frac{1}{D_{ii}}\, g_i^-, \ i=1,\dots,n.
	\end{align}
\end{subequations}

Note that the constraint $z_i \le 1$, implicit in the declaration $z\in[0,1]^n$, excludes $z_i^+ = z_i^- = 1$; hence $z_i \in \{0,1\}$ holds automatically and $z$ need not be declared binary. In formulation~\eqref{prob:cor_low_rank}, the constants \(M_i\) are the same as in formulation~\eqref{problem:low_rank_general}, and the bounds \(L_i,U_i\), when available, strengthen the disaggregated representation of the coordinate-wise optimality regimes.

The next result bounds the size of the branch-and-bound tree for the CORe formulation \eqref{prob:cor_low_rank} by counting the cells of the hyperplane arrangement that the coordinate thresholds induce in the space of the $y$ variables.

\begin{proposition}[Polynomial tree bound for fixed $m$]
\label{prop:poly_tree}
Suppose formulation \eqref{prob:cor_low_rank} is solved by a branch-and-bound scheme that branches on the regime variables \(z_i^+\) and \(z_i^-\) and prunes infeasible nodes. If \(m\) is fixed, then the branch-and-bound tree contains \(O(\min\{4^n,n^{m+1}\})\) nodes.
\end{proposition}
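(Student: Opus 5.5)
The plan mirrors the argument of Proposition~\ref{prop:star_graph}, replacing the one-dimensional interval partition by a hyperplane arrangement in $\mathbb{R}^m$.

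\textbf{Trivial bound.} Each coordinate carries two binary regime variables $(z_i^+,z_i^-)$, so there are $2n$ binaries in total. A binary branching tree over $2n$ variables has at most $2^{2n+1}-1=O(4^n)$ nodes. This gives the $4^n$ part of the minimum, and it also handles small $n$, so the remaining work is the $n^{m+1}$ bound.

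\textbf{Geometric reduction.} Fix a node of the tree and let $(\sigma_i)$ be the regime assignment in $\{0,+,-\}$ of the coordinates already resolved there. For instance, $z_i^+=1$ gives $+$, and $z_i^+=z_i^-=0$ gives $0$.

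By the disaggregation in \eqref{prob:cor_low_rank}, the node relaxation is feasible only if there is a $y$ satisfying, for each resolved $i$,
\[
g_i(y)\ge\tau_i \ (\sigma_i=+),\qquad g_i(y)\le-\tau_i\ (\sigma_i=-),\qquad |g_i(y)|\le\tau_i\ (\sigma_i=0).
\]
To see this, note that once $z_i^\pm$ are fixed at $0/1$, the disaggregated copies other than the active one are forced to zero. Hence $g_i(y)=F_iy+a_i$ equals the active copy, which obeys the corresponding threshold. The $x$-variables are then determined by $y$. So a feasible node corresponds to a nonempty polyhedron $P_\sigma\subseteq\mathbb{R}^m$, which is a union of faces and cells of the arrangement $\mathcal A$ of the $2n$ hyperplanes $F_iy+a_i=\pm\tau_i$.

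A fully resolved feasible node corresponds to a regime vector realized by some $y$. Every such vector is constant on a relatively open face of $\mathcal A$, so the number of feasible leaves is at most the number of faces of $\mathcal A$. By the classical bound \cite{zaslavsky1975facing}, an arrangement of $N=2n$ hyperplanes in $\mathbb{R}^m$ has at most
\[
\sum_{k=0}^{m}\binom{N}{k}\sum_{\ell}\binom{N-k}{\ell}=O(N^m)=O(n^m)
\]
faces for fixed $m$.

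I expect this face-counting step to be the main obstacle. Closed inequalities allow boundary points to realize patterns such as simultaneous ties, so counting full-dimensional cells alone does not suffice. I would first try counting all faces, which is still $O(n^m)$, or mapping each realized pattern to a closed cell containing a realizing point. The second approach fails for ties, so the face count is the safer choice.

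\textbf{From leaves to all nodes.} Suppose that, as in Proposition~\ref{prop:star_graph}, coordinates are resolved in a fixed order. At depth $k$, the feasible nodes are in bijection with the realizable partial patterns on the first $k$ coordinates. These are faces of the sub-arrangement of $2k$ hyperplanes, so there are $O(k^m)$ of them. Summing over $k=0,\dots,n$ gives $O(n^{m+1})$ feasible nodes.

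The two-step binary branching of the Remark (branch first on $z_i^+$, then on $z_i^-$) inflates this by only a constant factor. Each feasible node has $O(1)$ children, so infeasible pruned leaves add at most a constant factor as well.

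If the branching order is not fixed, I would instead bound the partial patterns at depth $k$ by the faces of the full arrangement's projections. Alternatively, I would note that each node's partial pattern is determined by the subset of resolved coordinates and a face, and argue along a path. However, the statement's proof presumably follows the fixed-order scheme, and I would adopt that convention explicitly. Combining the two bounds gives $O(\min\{4^n,n^{m+1}\})$.
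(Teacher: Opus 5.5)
\textbf{Gap in the face count.} The step that fails is ``every such vector is constant on a relatively open face of $\mathcal A$, so the number of feasible leaves is at most the number of faces.'' The sign vector is constant on a face, but it does not determine the regime vector. On a face contained in $F_iy+a_i=\tau_i$, coordinate $i$ may be in regime $0$ or in regime $+$. So a face lying in hyperplanes of $k$ distinct coordinates supports up to $2^k$ feasible leaves. Counting faces therefore does not cure the tie problem you rightly flagged.

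Without a nondegeneracy hypothesis, no counting can cure it. Take $m=1$, $D=I$, $F_i=1$, $a_i=0$, $\lambda_i=\tfrac12$ (so $\tau_i=1$), $G=n+1$, any $b$, and $U_i,M_i\ge 1$. All $2n$ hyperplanes collapse onto $y=\pm1$. At $y=1$ every assignment in $\{0,+\}^n$ is feasible for \eqref{prob:cor_low_rank}. A scheme that prunes only infeasible nodes therefore has at least $2^n$ leaves, not $O(n^2)$.

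What you need is general position: $\tau_i>0$, and no point of $\mathbb{R}^m$ lies on more than $m$ of the $2n$ hyperplanes. Then at most $m$ coordinates are tied at any $y$, each face supports at most $2^m$ patterns, and your count becomes $O(2^m n^m)=O(n^m)$. The paper's proof has the same hole. Its sentence ``distinct feasible integer assignments correspond to distinct cells'' silently assumes that every feasible pattern is realized in the interior of some cell, which is exactly what fails at ties. Your face count does not repair this; the assumption has to be stated.

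\textbf{Branching order.} Your count of internal nodes assumes coordinates are resolved in a fixed order. The proposition allows any branching on the $z_i^\pm$, and your fallback for general orders is only sketched. The paper's argument does not depend on the order. The nodes with feasible relaxations form a subtree containing the root, and every feasible node has a feasible child. To see this, take $\bar y$ feasible for the node's relaxation and fix the next regime variable consistently with a regime valid at $\bar y$; this works even at ties. Hence every leaf of that subtree is a distinct, fully resolved feasible pattern. Since the depth is at most $2n$, the subtree has at most $(2n+1)$ times as many nodes as there are feasible patterns, and pruned infeasible children add only a constant factor. Combined with a correct pattern count under general position, this gives $O(n^{m+1})$ for every branching order, without the per-depth sub-arrangement bookkeeping.
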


\begin{proof}
There are $2n$ binary variables, so the branch-and-bound tree has depth at most $2n$ and hence $O(4^n)$ nodes.

For the second bound, each coordinate $i$ induces two hyperplanes in the $y$-space,
\[
F_i y+a_i=\pm\tau_i,\qquad i=1,\dots,n.
\]
These $2n$ hyperplanes partition $\mathbb{R}^m$ into at most $\sum_{\ell=0}^{m}\binom{2n}{\ell}$ cells \cite{zaslavsky1975facing}, which is $O(n^m)$ for fixed $m$. Inside any cell, the signs of $F_i y+a_i-\tau_i$ and $F_i y+a_i+\tau_i$ are fixed, and therefore the regime of every coordinate is uniquely determined; distinct feasible integer assignments of the regime variables thus correspond to distinct cells, and their number is bounded by $O(n^m)$.

Next, we count the nodes of the branch-and-bound tree. The nodes whose relaxations are feasible form a subtree containing the root. A leaf of this subtree either has all regime variables resolved, or is an internal node of the branch-and-bound tree all of whose children are infeasible; the latter cannot occur, because a feasible node contains a point $\bar y$ in some cell of the arrangement, and the child obtained by fixing the next regime variable consistently with the regime pattern of that cell is again feasible. Hence every leaf of the feasible subtree corresponds to a fully resolved regime assignment, and the subtree has at most $O(n^m)$ leaves. Since its depth is at most $2n$, the feasible subtree contains at most $O(n\cdot n^{m})$ nodes. Finally, every node of the branch-and-bound tree is either in the feasible subtree or is a child of a node in it (an infeasible node is pruned immediately and has no children), and each node has at most two children. Therefore, the total number of nodes is $O(n^{m+1})$. \qed
\end{proof}

\section{CORe for Robust Single-Index Models}
\label{sec:single_index}
Moving beyond quadratic models, we next apply CORe to a general class of robust single-index models. Consider the optimization problem
\begin{equation}
\label{eq:sim_original_cor}
\min_{\theta \in \mathbb{R}^d,\; z \in \{0,1\}^n}
\;\sum_{i=1}^n \ell\!\left(r_i,\; f(\langle \phi_i,\theta\rangle)\right)(1-z_i) + \frac{1}{d}\|\theta\|_2^2
+ \sum_{i=1}^n \lambda_i z_i,
\end{equation}
where $\phi_i \in \mathbb{R}^d$ are covariates,
$r_i \in \mathbb{R}$ are responses,
$f:\mathbb{R}\to\mathbb{R}$ is a known link function,
$\ell(\cdot,\cdot)$ is a loss function,
and $z_i$ indicates whether observation $i$ is treated as an outlier. 
In this form, the indicators switch objective terms on and off rather than controlling continuous variables, so \eqref{eq:sim_original_cor} does not immediately fit template \eqref{eq:core_template}; we first rewrite it in indicator-constraint form. Two special cases of \eqref{eq:sim_original_cor} will play a central role in this section: penalized least trimmed squares, obtained with the identity link and quadratic loss (Section~\ref{subsec:penalized_lts}), and robust logistic regression, obtained with the sigmoid link and cross-entropy loss (Section~\ref{subsec:classification}). In Section~\ref{subsec:sim_poly}, we show that when the number of covariates $d$ is fixed, the coordinate-optimality structure yields a polynomial bound on the size of the branch-and-bound tree for the entire class, together with a polynomial-time solution scheme under a mild convexity condition.

\begin{assumption}
\label{assump:loss_cor}
For each $i$, the function
$\ell_i:=\ell(r_i,\cdot)$
is convex, nonnegative, and satisfies $\ell_i(r_i)=0$. Moreover, the link function \(f\) is monotone.
\end{assumption}

Assumption~\ref{assump:loss_cor} is mild and is satisfied by most common choices of loss and link functions. On the loss side, it is satisfied by quadratic, Huber, and cross-entropy losses, covering standard regression, robust regression, and classification settings, respectively. On the link function side, it is satisfied by the identity, ReLU, and sigmoid, corresponding to linear, rectified, and logistic models, respectively.

To expose coordinate-wise structure, we first provide a reformulation of~\eqref{eq:sim_original_cor}.
Under Assumption~\ref{assump:loss_cor}, and for constants $M_i \ge \sup_\theta\, |r_i - f(\langle \phi_i,\theta\rangle)|$, where the supremum is taken over the relevant range of $\theta$ (see the bounds derived in Section~\ref{subsec:classification}), we can rewrite \eqref{eq:sim_original_cor} as
\begin{equation}
\label{eq:sim_w_cor}
\min_{\theta,w,z}
\;\sum_{i=1}^n \ell_i\!\left(f(\langle \phi_i,\theta\rangle)+w_i\right) + \frac{1}{d}\|\theta\|_2^2
+ \sum_{i=1}^n \lambda_i z_i
\quad
\text{s.t. } -M_iz_i \leq w_i \leq M_iz_i.
\end{equation}
To see this equivalence, note that $z_i = 0$ enforces $w_i = 0$ in~\eqref{eq:sim_w_cor}, so the corresponding objective term reduces to $\ell_i(f(\langle \phi_i, \theta\rangle))$, matching~\eqref{eq:sim_original_cor}. When $z_i = 1$ and $M_i$ is sufficiently large, the variable $w_i$ takes the value $w_i = r_i - f(\langle \phi_i, \theta\rangle)$ at optimality, which gives $\ell_i(f(\langle \phi_i, \theta\rangle) + w_i) = 0$ by Assumption~\ref{assump:loss_cor}, effectively removing $r_i$ from the objective.

Under Assumption~\ref{assump:loss_cor}, the set
\[
S_i(\lambda_i):=\{t\in\mathbb R:\ell_i(f(t))\le \lambda_i\}
\]
is a (possibly unbounded) interval. This follows from the convexity of $\ell_i$, which ensures that all sublevel sets of $\ell_i$ are intervals, and the fact that the preimage of an interval under a monotone function is again an interval.

As we show next, this interval structure is exactly what makes the CORe regime comparison representable by linear constraints.

\paragraph{Optimality condition reformulation.}

Fix \(\theta\) and consider the coordinate subproblem in \((w_i,z_i)\):
\[
\min_{w_i,\,z_i\in\{0,1\}}
\;\ell_i\!\left(f(\langle \phi_i,\theta\rangle)+w_i\right)
+\lambda_i z_i
\quad
\text{s.t.}\quad
-M_iz_i\le w_i\le M_iz_i .
\]
If \(z_i=0\), then \(w_i=0\), and the off-regime value is $V_i^{\mathrm{off}}(\theta)
=
\ell_i\!\left(f(\langle \phi_i,\theta\rangle)\right).$
If \(z_i=1\), then \(w_i\) is free within the big-\(M\) bounds, and a minimizer is $w_i^\star(\theta)
=
r_i-f(\langle \phi_i,\theta\rangle)$,
which is feasible by the choice of $M_i$. Hence, since $\ell_i\ge 0$ and $\ell_i(r_i)=0$ by Assumption~\ref{assump:loss_cor}, the on-regime has value $V_i^{\mathrm{on}}(\theta)=\lambda_i$.
Thus, the coordinate-wise comparison is
\[
V_i^{\mathrm{off}}(\theta)\le V_i^{\mathrm{on}}(\theta)
\quad\Longleftrightarrow\quad
\ell_i\!\left(f(\langle \phi_i,\theta\rangle)\right)\le \lambda_i\quad\Longleftrightarrow\quad \langle \phi_i,\theta\rangle\in S_i(\lambda_i).
\]
Recall that $S_i(\lambda_i)$ is an interval; write its endpoints as $a_i \in \mathbb{R} \cup \{-\infty\}$ and $b_i \in \mathbb{R} \cup \{+\infty\}$, with the interval open at any infinite endpoint. The coordinate-wise comparison induces three regimes:
\begin{align*}
    \mathcal D_{i, \mathrm{OC}}^0 &= \{(\theta, w,z): z_i=0,\; w_i=0,\; a_i\le \langle \phi_i,\theta\rangle\le b_i \},\\
    \mathcal D_{i, \mathrm{OC}}^- &= \{(\theta, w,z): z_i=1,\; \langle \phi_i,\theta\rangle\le a_i\}, \\
    \mathcal D_{i, \mathrm{OC}}^+ &= \{(\theta, w,z): z_i=1,\; \langle \phi_i,\theta\rangle\ge b_i\}.
\end{align*}
If $a_i = -\infty$ or $b_i = +\infty$, the corresponding sets $\mathcal{D}_{i, \mathrm{OC}}^-$ or $\mathcal{D}_{i, \mathrm{OC}}^+$ are empty.

We next encode the disjunction $\mathcal D_{i, \mathrm{OC}}^0\cup \mathcal D_{i, \mathrm{OC}}^-\cup \mathcal D_{i, \mathrm{OC}}^+$.
Let $t_i=\langle \phi_i,\theta\rangle$,
and assume known bounds \(L_i\le t_i\le U_i\). Let the binary variables
\(z_i^-,z_i^+\in\{0,1\}\) represent the left and
right on-regimes, respectively, and set $z_i=z_i^-+z_i^+$.
The off-regime is represented by \(z_i=0\). Finally, we introduce the disaggregated variables $t_i^0, t_i^-, t_i^+,$
with $t_i=t_i^0+t_i^-+t_i^+.$
After intersecting the three regimes with the bounds \(L_i\le t_i\le U_i\), the bounded disjunction can be encoded as
\begin{align}
& a_i(1-z_i)\le t_i^0\le b_i(1-z_i), \label{eq:sim_t0_core}\\
& L_i z_i^-\le t_i^-\le a_i z_i^-, \label{eq:sim_tminus_core}\\
& b_i z_i^+\le t_i^+\le U_i z_i^+. \label{eq:sim_tplus_core}
\end{align}
Combining these constraints with the original formulation yields the CORe-enhanced
formulation
\begin{equation}
\label{eq:sim_core_reformulation}
\begin{aligned}
\min_{\theta,w,z,z^-,z^+,t,t^0,t^-,t^+}\quad
& \sum_{i=1}^n
\ell_i\!\left(f(t_i)+w_i\right)
+\frac{1}{d}\|\theta\|_2^2
+\sum_{i=1}^n \lambda_i z_i \\
\text{s.t.}\quad
& t_i=\langle \phi_i,\theta\rangle,
\qquad i=1,\ldots,n,\\
& -M_iz_i\le w_i\le M_iz_i,
\qquad i=1,\ldots,n,\\
& z_i=z_i^-+z_i^+,
\qquad i=1,\ldots,n,\\
& t_i=t_i^0+t_i^-+t_i^+,
\qquad i=1,\ldots,n,\\
& \eqref{eq:sim_t0_core}-\eqref{eq:sim_tplus_core},
\qquad i=1,\ldots,n,\\
& z_i^-,z_i^+\in\{0,1\},
\qquad i=1,\ldots,n.
\end{aligned}
\end{equation}

In the above CORe-enhanced formulation, all constraints are linear except for the integrality constraints on $(z^-,z^+)$; the formulation is therefore directly amenable to branch-and-bound whenever the composite terms $\ell_i(f(t_i)+w_i)$ are convex, as is the case for affine links. For nonlinear links the composition is nonconvex in general; however, convexity can sometimes be recovered by eliminating the correction variables $w$ and expressing the loss directly in terms of $t_i$, as we illustrate for robust logistic regression in Section~\ref{subsec:classification}.

\begin{remark}\label{remark:CF-single-index}
    Although $w_i$ admits a closed-form minimizer $w_i^\star(\theta) = r_i - f(\langle \phi_i,\theta\rangle)$ on the on-regime, we do not encode this equality directly in the formulation: unless $f$ is affine, the constraint $w_i = r_i - f(\langle \phi_i,\theta\rangle)$ is nonlinear and does not yield a convex set. This highlights a key limitation of CORe for general robust single-index models relative to quadratic programs with indicators. For the latter, CORe can be strengthened by both the optimality-condition and closed-form reformulations; for the former, only the optimality-condition reformulation is tractable. As a result, CORe is expected to be less effective on general robust single-index models.
\end{remark}

\subsection{Penalized Least Trimmed Squares}
\label{subsec:penalized_lts}

An important special case of \eqref{eq:sim_original_cor} arises with the identity link $f(t)=t$ and the quadratic loss $\ell(r,u)=\frac12(r-u)^2$, for which the model reads
\begin{equation}
\label{eq:penalized_lts}
\min_{\theta\in\mathbb{R}^d,\; z\in\{0,1\}^n}
\;\sum_{i=1}^n \tfrac12\bigl(r_i-\langle \phi_i,\theta\rangle\bigr)^2(1-z_i)
+\frac{1}{d}\|\theta\|_2^2
+\sum_{i=1}^n \lambda_i z_i .
\end{equation}
Problem \eqref{eq:penalized_lts} is a penalized form of least trimmed squares (LTS) regression: each observation is either fit by least squares or discarded at cost $\lambda_i$, so the estimator automatically trims observations whose residuals are too large to be worth fitting \cite{zioutas2005deleting,gomez2021outlier,meng2026computation}. Assumption~\ref{assump:loss_cor} holds, $\ell_i(f(t))=\frac12(r_i-t)^2$, and the interval in the regime comparison is
\[
S_i(\lambda_i)=\bigl[\,r_i-\sqrt{2\lambda_i},\; r_i+\sqrt{2\lambda_i}\,\bigr],
\]
i.e., observation $i$ is fit at an optimal solution if its index $\langle \phi_i,\theta\rangle$ lies within $\sqrt{2\lambda_i}$ of the response $r_i$, and trimmed otherwise. Moreover, in its lifted form \eqref{eq:sim_w_cor}, problem \eqref{eq:penalized_lts} becomes
\begin{equation}
\label{eq:lts_lifted}
\min_{\theta,\,w,\,z}\;
\tfrac12\|r-\Phi\theta-w\|_2^2+\frac{1}{d}\|\theta\|_2^2+\lambda^\top z
\quad\text{s.t.}\quad -M_iz_i\le w_i\le M_iz_i,\; i=1,\ldots,n,
\end{equation}
where $\Phi\in\mathbb{R}^{n\times d}$ collects the covariates $\phi_1,\ldots,\phi_n$ as rows. Formulation \eqref{eq:lts_lifted} is precisely an instance of the unbalanced bipartite quadratic model \eqref{problem:low_rank_general} of Section~\ref{subsec:bipartite_qp}, with the correction variables $w$ in the role of $x$ and the regression coefficients $\theta$ in the role of $y$. The developments of this section can therefore be seen as an extension of Section~\ref{subsec:bipartite_qp} beyond quadratic objectives.

\paragraph{Relation to \cite{meng2026computation}.}
The closest work to this section is \cite{meng2026computation}, which develops a branch-and-bound framework for least trimmed squares enhanced by hyperplane arrangements. The two approaches share a central insight: at an optimal solution, whether observation $i$ is trimmed is determined by the position of the index $\langle \phi_i,\theta\rangle$ relative to explicit thresholds, and the arrangement of the resulting $2n$ threshold hyperplanes in the $d$-dimensional coefficient space bounds the number of trimming patterns that can occur at optimality by $O(n^d)$ for fixed $d$. Unlike the methods in \cite{meng2026computation}, which are tailored to the quadratic loss in the special case of the penalized trimmed least squares problem, the CORe conditions are derived from the general coordinate-optimality principle of Section~\ref{sec:cor_framework} and apply to any loss and link satisfying Assumption~\ref{assump:loss_cor}, including the logistic model of Section~\ref{subsec:classification}. Proposition~\ref{prop:poly_sim} below extends the resulting polynomiality guarantees from least trimmed squares to the general robust single-index class.

\subsection{Robust Logistic Regression}\label{subsec:classification}
As a special case, we consider robust binary classification via logistic
regression. Here the response $r \in \{0,1\}^n$ collects binary labels, the
link function is the sigmoid
\[
    f(t) := \frac{1}{1+e^{-t}},
\]
and the loss is binary cross-entropy
\[
    \ell(r,p) := -r\log p - (1-r)\log(1-p), \qquad p \in (0,1).
\]
To apply the CORe formulation~\eqref{eq:sim_core_reformulation}, we
first characterize
\[
    S_i(\lambda_i) := \{t \in \mathbb{R} : \ell_i(f(t)) \le \lambda_i\}.
\]
If $r_i = 1$, then $\ell_i(f(t)) = -\log(f(t))$. This leads to
\[
    \ell_i(f(t)) \le \lambda_i
    \;\Longleftrightarrow\;
    t \ge -\log(e^{\lambda_i}-1),
\]
giving $S_i(\lambda_i) = [-\log(e^{\lambda_i}-1),\, +\infty)$. Similarly, if $r_i = 0$,
then $\ell_i(f(t)) = -\log(1-f(t))$. This yields
\[
    \ell_i(f(t)) \le \lambda_i
    \;\Longleftrightarrow\;
    t \le \log(e^{\lambda_i}-1),
\]
giving $S_i(\lambda_i) = (-\infty,\, \log(e^{\lambda_i}-1)]$. Both cases
unify compactly as
\[
    (2r_i-1)\,t \ge \tau_i, \qquad \tau_i := -\log(e^{\lambda_i}-1).
\]
Note that, in this subsection, $\tau_i$ denotes this logistic threshold, which may take either sign, in contrast with the nonnegative thresholds of Section~\ref{sec:applications_QP}. Hence each coordinate induces a two-way split,
\begin{align*}
    \mathcal{D}_{i, \mathrm{OC}}^0 &:= \left\{(\theta,w,z):\; z_i=0,\; w_i=0,\; (2r_i-1)\langle \phi_i,\theta\rangle\ge \tau_i\right\},\\
    \mathcal{D}_{i, \mathrm{OC}}^1 &:= \left\{(\theta,w,z):\; z_i=1,\; (2r_i-1)\langle \phi_i,\theta\rangle \le \tau_i\right\}.
\end{align*}

For the logistic model, the loss admits the compact representation $\ell_i(f(t)) = \log\bigl(1 + e^{-(2r_i-1)t}\bigr)$. Introduce disaggregated variables $t_i^0, t_i^1$ with $(2r_i-1)\langle \phi_i,\theta\rangle = t_i^0 + t_i^1$, where $t_i^1$ is the on-regime copy. Eliminating the correction variable $w_i$---whose only role at optimality is to nullify the loss of an outlying observation---we charge observation $i$ the loss $\log(1+e^{-t_i^0})$ evaluated at the off-regime copy. When $z_i = 1$, the disjunctive bounds force $t_i^0 = 0$, so this term contributes the constant $\log 2$; correcting the outlier penalty to $\lambda_i - \log 2$ makes the total contribution exactly $\lambda_i$. (In particular, the resulting formulation remains valid for any $\lambda_i > 0$, even when the coefficient $\lambda_i - \log 2$ is negative.) This yields the following two-regime model.
\begin{equation} \label{eq:robust_logistic_core} 
\begin{aligned} \min_{\theta\in\mathbb{R}^d,\; t^0,t^1\in\mathbb{R}^n,\; z\in\{0,1\}^n}\quad & \sum_{i=1}^n \log\!\left(1+\exp(-t_i^0)\right) + \frac{1}{d}\|\theta\|_2^2 + \sum_{i=1}^n (\lambda_i-\log 2)z_i \\ \text{s.t.}\quad & (2r_i-1)\langle \phi_i,\theta\rangle = t_i^0+t_i^1, \quad i=1,\ldots,n,\\ & \tau_i(1-z_i)\le t_i^0\le U_i(1-z_i), \quad  i=1,\ldots,n,\\ & L_i z_i\le t_i^1\le \tau_i z_i, \quad  i=1,\ldots,n. 
\end{aligned} 
\end{equation}

Indeed, when $z_i=0$, we have $t_i^1=0$, so $t_i^0=(2r_i-1)\langle\phi_i, \theta\rangle$ and the objective includes the logistic loss. When $z_i=1$, it gives $t_i^0=0$, so the objective contribution is $\log 2+(\lambda_i-\log 2)=\lambda_i$,
which is exactly the cost of treating observation $i$ as an outlier.

Formulation \eqref{eq:robust_logistic_core} therefore encodes the coordinate-wise CORe logic for
robust binary classification. In contrast to the earlier example
based on bounded intervals, the present example yields a one-sided half-space condition, and hence a two-term disjunction for each observation.

Since $(\theta,z)=(0,0)$ is feasible for \eqref{eq:sim_original_cor} with objective value $n\log 2$, and the CORe reformulation preserves the optimal value, any optimal solution $\theta^\star$ satisfies
\[
\frac{1}{d}\|\theta^\star\|_2^2 \le n\log 2,
\]
and hence
\[
\|\theta^\star\|_2 \le \sqrt{nd\log 2}.
\]
It follows that
\[
\bigl|(2r_i-1)\phi_i^\top \theta^\star\bigr|
\le \|\phi_i\|_2\,\|\theta^\star\|_2
\le \|\phi_i\|_2\sqrt{nd\log 2}.
\]
Thus, a valid choice for the bounds on $(2r_i-1)\langle \phi_i,\theta\rangle $ is
\[
L_i := -\|\phi_i\|_2\sqrt{nd\log 2},
\qquad
U_i := \|\phi_i\|_2\sqrt{nd\log 2},
\qquad i=1,\dots,n.
\]
As we show next in Section~\ref{subsec:sim_poly}, the convexity of the logistic loss further implies that robust logistic regression can be solved in polynomial time when the number of covariates $d$ is fixed.

\subsection{Polynomial Complexity for Fixed Dimension}
\label{subsec:sim_poly}

We now show that the polynomial bounds established for the unbalanced bipartite quadratic model in Proposition~\ref{prop:poly_tree} are not an artifact of the quadratic objective: they extend to the entire robust single-index class, covering in particular the two special cases discussed above. The driving structure is the same. The coordinate-wise optimality conditions constrain $\theta$ only through the $n$ scalars $\langle \phi_i,\theta\rangle$, and the finite endpoints of the intervals $S_i(\lambda_i)$ define at most $2n$ hyperplanes
\[
\langle \phi_i,\theta\rangle=a_i,
\qquad
\langle \phi_i,\theta\rangle=b_i,
\qquad i=1,\ldots,n,
\]
in $\mathbb{R}^d$. Within any cell of the arrangement of these hyperplanes, the position of every index $\langle \phi_i,\theta\rangle$ relative to $S_i(\lambda_i)$ is fixed, and hence so is the regime of every observation.

\begin{proposition}[Polynomial bounds for robust single-index models]
\label{prop:poly_sim}
Suppose Assumption~\ref{assump:loss_cor} holds and the number of covariates $d$ is fixed.
\begin{enumerate}
\item[(i)] At most $O(n^d)$ regime patterns are consistent with the coordinate-wise optimality conditions. Consequently, a branch-and-bound scheme applied to formulation \eqref{eq:sim_core_reformulation} that branches on the regime variables $z_i^-, z_i^+$ and prunes infeasible nodes explores $O(n^{d+1})$ nodes.
\item[(ii)] If, in addition, the functions $t\mapsto \ell_i(f(t))$ are convex for all $i$, then problem \eqref{eq:sim_original_cor} can be solved by solving $O(n^{d})$ convex optimization problems, and hence in polynomial time.
\end{enumerate}
\end{proposition}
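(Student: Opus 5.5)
The plan is to mirror the proof of Proposition~\ref{prop:poly_tree}, with $\theta\in\mathbb{R}^d$ playing the role of $y$. For part (i), we consider the at most $2n$ affine hyperplanes $\langle\phi_i,\theta\rangle=a_i$ and $\langle\phi_i,\theta\rangle=b_i$, keeping only the finite endpoints of $S_i(\lambda_i)$. By the arrangement bound of \cite{zaslavsky1975facing}, they partition $\mathbb{R}^d$ into at most $\sum_{\ell=0}^{d}\binom{2n}{\ell}=O(n^d)$ faces for fixed $d$. Here we count all relatively open faces of the arrangement, not just full-dimensional cells, since a point on a hyperplane can satisfy more than one regime; the face count is still $O(n^d)$.

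On a face, the sign of each $\langle\phi_i,\theta\rangle-a_i$ and $\langle\phi_i,\theta\rangle-b_i$ is fixed. Hence the set of regimes in $\{0,-,+\}$ admissible for observation $i$ under \eqref{eq:sim_t0_core}--\eqref{eq:sim_tplus_core} is fixed. In the interior of a cell this set is a single regime; on boundary points it can contain two. To avoid this ambiguity, I will count the regime patterns $(z^-,z^+)$ for which the feasible set of $\theta$ in the projection of \eqref{eq:sim_core_reformulation} is nonempty. Each such pattern defines the polyhedron $P(z)=\{\theta:\langle\phi_i,\theta\rangle\in \text{regime interval}\}$. Two patterns with the same relative-interior face would be distinct polyhedra, so the cleanest route is the one used implicitly in Proposition~\ref{prop:poly_tree}: for fixed $d$, the number of distinct sign vectors in $\{<,=,>\}^{2n}$ realized by points of $\mathbb{R}^d$ is $O(n^d)$. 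Any nonempty $P(z)$ contains a point $\bar\theta$, and that point's sign vector determines the pattern up to at most $2^{k}$ choices, where $k$ is the number of equalities. We only need optimality-consistent patterns, though. So I will instead define the pattern of $\bar\theta$ canonically, choosing the off regime at ties, and note that an optimal solution can always be taken with that tie-breaking since both regimes have equal value. This gives $O(n^d)$ patterns.

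The branch-and-bound count then repeats Proposition~\ref{prop:poly_tree} verbatim. The feasible nodes form a subtree rooted at the root. No feasible node has all children infeasible, because a feasible $\bar\theta$ at that node admits a consistent setting of the next regime variable. Leaves of this subtree are therefore fully resolved feasible patterns; there are $O(n^d)$ of them, and the depth is $2n$. Adding at most two infeasible children per node gives $O(n^{d+1})$ nodes. I expect the main obstacle to be the boundary and tie issue for leaf counting: a branch-and-bound scheme may reach patterns feasible only on a boundary face. I would handle this by counting faces rather than cells, which still gives $O(n^d)$, and by noting that each fully resolved feasible pattern $z$ is determined by the sign vector of a point of its polyhedron, since ties leave at most a choice per observation. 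To avoid an exponential blowup there, I will argue that the set of patterns with nonempty $P(z)$ injects into faces of the refined arrangement that includes the additional hyperplanes. This is still $O(n^d)$ hyperplane-type objects.

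For part (ii), fix a pattern from (i), i.e.\ a partition of the observations into fit ($z_i=0$) and trimmed ($z_i=1$). With $z$ fixed, \eqref{eq:sim_original_cor} reduces to minimizing $\sum_{i:z_i=0}\ell_i(f(\langle\phi_i,\theta\rangle))+\frac1d\|\theta\|_2^2+\text{const}$, which is convex under the added hypothesis. Proposition~\ref{prop:cor_principle} guarantees that some global optimum's pattern is among the $O(n^d)$ enumerated ones. The arrangement's faces, and a point in each, can be enumerated in $O(n^d)$ time for fixed $d$ by standard incremental construction. We solve each convex problem and return the best, which gives polynomial time.
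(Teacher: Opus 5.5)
\textbf{Part (ii) is sound. Part (i) has a genuine gap in the branch-and-bound half, at exactly the step you flag, and it is not closed by your proposed fix.}

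Your off-at-ties convention gives a valid $O(n^d)$ count of \emph{canonical} patterns, which is all (ii) needs. Branch-and-bound, however, applies no tie-breaking: every $z$ with $P(z)\neq\emptyset$ is a feasible fully resolved leaf. Your closing claim, that these patterns inject into the faces of some refined arrangement, is not justified. It is in fact false without a nondegeneracy assumption.

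Take $d=1$, $\phi_i=1$, $r_i=0$, quadratic loss, $\lambda_i=\lambda>0$, and any bounds with $U_i\ge\sqrt{2\lambda}$. All hyperplanes coincide with the two points $\theta=\pm\sqrt{2\lambda}$. Yet at $\theta=\sqrt{2\lambda}$ every observation may sit in either the off regime or the right on-regime. So all $2^n$ such assignments are feasible for \eqref{eq:sim_core_reformulation}, and all the mixed ones share the same $P(z)=\{\sqrt{2\lambda}\}$. Here $k=n$ in your $2^k$ count, and infeasibility pruning alone leaves at least $2^n$ nodes, not $O(n^2)$.

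The paper's own proof does not close this either. It counts only open cells and asserts that every consistent pattern arises from one, which silently discards patterns realizable only on lower-dimensional faces. What is actually needed is a nondegeneracy hypothesis, for example that every $\theta$ satisfies $\langle\phi_i,\theta\rangle\in\{a_i,b_i\}$ for at most $d$ indices $i$. Under it:
\begin{itemize}
\item the admissible regimes of each observation are constant on each relatively open face, and at most $d$ observations admit more than one regime there;
\item so each of the $O(n^d)$ faces carries at most $3^d$ fully resolved feasible patterns;
\item and your feasible-subtree argument then yields $O(n^{d+1})$ nodes.
\end{itemize}
If, moreover, the normals $\phi_i$ of the hyperplanes through any point are linearly independent, every such pattern is realized in an adjacent open cell, which is what the paper's cell count implicitly uses. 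A minor point: $\sum_{\ell\le d}\binom{2n}{\ell}$ counts regions, not all faces, although the total face count is also $O(n^d)$.

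For (ii), your route differs mildly from the paper's, and both are correct. You solve the fixed-pattern convex problem over all of $\mathbb{R}^d$ for the canonical pattern of each face. This works because each such value is at least the optimum, and the canonical pattern of an optimal $\theta^\star$ attains it. The paper instead minimizes $\sum_i\min\{\ell_i(f(\langle\phi_i,\theta\rangle)),\lambda_i\}+\frac1d\|\theta\|_2^2$ over each closed cell, where it coincides with a convex function. The paper's version needs only cells; yours needs faces but avoids the piecewise-agreement argument.
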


\begin{proof}
(i) The at most $2n$ hyperplanes $\langle \phi_i,\theta\rangle=a_i$ and $\langle \phi_i,\theta\rangle=b_i$ (only finite endpoints contribute) partition $\mathbb{R}^d$ into at most $\sum_{\ell=0}^{d}\binom{2n}{\ell}=O(n^d)$ cells \cite{zaslavsky1975facing}. Fix a cell $C$. For every $\theta\in C$, the signs of $\langle \phi_i,\theta\rangle-a_i$ and $\langle \phi_i,\theta\rangle-b_i$ are constant, so the regime of every observation dictated by the coordinate-wise optimality conditions is the same for all $\theta\in C$. Each cell is therefore consistent with exactly one regime pattern, and every pattern consistent with the coordinate-wise optimality conditions arises from at least one cell. This proves the $O(n^d)$ bound on the number of patterns.

For the node bound, observe that all regime constraints in \eqref{eq:sim_core_reformulation} are linear in $(\theta,w,t,t^0,t^-,t^+,z,z^-,z^+)$. As in the proof of Proposition~\ref{prop:poly_tree}, the nodes with feasible relaxations form a subtree containing the root; its leaves correspond to distinct consistent regime patterns, so the subtree has at most $O(n^d)$ leaves and, having depth at most $2n$, at most $O(n\cdot n^{d})$ nodes. Every pruned node is a child of a node of this subtree, and each node has at most two children, so the total number of nodes is $O(n^{d+1})$.

(ii) Eliminating $w$ and $z$ coordinate-wise, as in the derivation of the on- and off-regime values, problem \eqref{eq:sim_original_cor} is equivalent to
\[
\min_{\theta\in\mathbb{R}^d}\;
\sum_{i=1}^n \min\Bigl\{\ell_i\bigl(f(\langle \phi_i,\theta\rangle)\bigr),\,\lambda_i\Bigr\}
+\frac{1}{d}\|\theta\|_2^2 .
\]
On the closure $\bar C$ of any cell $C$, each term $\min\{\ell_i(f(\langle \phi_i,\theta\rangle)),\lambda_i\}$ coincides with a single branch: with $\ell_i(f(\langle \phi_i,\theta\rangle))$ if $\langle \phi_i,\theta\rangle\in S_i(\lambda_i)$ on $C$, and with the constant $\lambda_i$ otherwise. (On the boundary of $\bar C$, the two branches agree, since, there, $\ell_i(f(\langle \phi_i,\theta\rangle))=\lambda_i$.) Under the convexity hypothesis, the restricted objective is therefore convex on the polyhedron $\bar C$. Since the closure of some cell contains a global minimizer, it suffices to minimize the restricted objective over $\bar C$ for each of the $O(n^d)$ cells and return the best solution found. For fixed $d$, the cells and their associated regime patterns can be enumerated in polynomial time, e.g., by the facet-parametrization procedure of Appendix~\ref{sec:poly-algo}. \qed
\end{proof}

The convexity hypothesis in part (ii) is satisfied, in particular, by any convex loss combined with an affine link---covering penalized least trimmed squares---and by the logistic model of Section~\ref{subsec:classification}, for which $\ell_i(f(t))=\log\bigl(1+e^{-(2r_i-1)t}\bigr)$ is convex. For least trimmed squares, Proposition~\ref{prop:poly_sim} recovers guarantees of the type established in \cite{meng2026computation}; the proposition shows that they persist far beyond the quadratic loss. In contrast, without the convexity hypothesis, part (i) still bounds the combinatorial search, but the node relaxations of \eqref{eq:sim_core_reformulation} need not be tractable, in line with the limitations discussed in Remark~\ref{remark:CF-single-index}.

\paragraph{Computational tradeoffs.}
Before turning to the experiments, we emphasize that CORe introduces a tradeoff. On the one hand, the added threshold, stationarity, and disjunctive constraints make the regime logic explicit, which can help general-purpose solvers through presolve, bound tightening, variable fixing, and cut generation, as well as through more informative branching. On the other hand, they also enlarge the model, and the resulting relaxations can be more expensive to solve. Thus, CORe is most beneficial when the coordinate-wise conditions are informative, as in sparse or low-dimensional-control settings, but its overhead may dominate when the coordinate-wise conditions are uninformative or when each coordinate is coupled with many others.

\section{Experiments}
\label{sec:experiments}

\subsection{Experimental Setup}

All experiments are implemented in Python 3.9 and solved using Gurobi 12.0.3 through its Python API. The computations are conducted on a single compute node equipped with an Intel Xeon Platinum 8592+ processor, using 8 CPU cores and 12 GB of RAM. Unless otherwise specified, Gurobi parameters are kept at their default values. Time limits are set to 3600 seconds for most instances and 600 seconds for structured instances such as path, star, and tree graphs. All reported run times correspond to solver time and exclude model construction overhead. The results reported in the tables and figures are averaged over five independent trials.

\subsection{Quadratic Programs with Indicators}
\label{sec:qp_experiments}

We evaluate the computational performance of the proposed formulation on a family of synthetic quadratic programs with indicator variables of the form
\[
\min_{x \in \mathbb{R}^n,\, z \in \{0,1\}^n}
\frac{1}{2}(d - x)^\top Q (d - x) + \lambda^\top z
\quad \text{s.t.} \quad x_i(1 - z_i) = 0,\; i=1,\dots,n.
\]

To systematically study the effect of sparsity, we generate $Q$ from a weighted Erd\H{o}s--R\'enyi random graph. 
For each pair $(i,j)$ with $i<j$, an edge is included independently with probability $\delta$; whenever an edge is present, its weight $W_{ij}$ is drawn independently from a uniform distribution on $[0.1,1]$, and we set $W_{ji}=W_{ij}$ to ensure symmetry, while $W_{ij}=0$ if no edge is present. The weighted graph Laplacian $L$ is constructed as
\[
L = D - W,
\]
where $W$ is the weighted adjacency matrix and $$D=\mathrm{diag}(\sum_{j=1}^nW_{1j},\ldots,\sum_{j=1}^nW_{nj})$$ is the diagonal degree matrix. 
The Laplacian is positive semidefinite by construction. 
To control conditioning, we define
\[
Q = L + \alpha I,
\]
where $\alpha$ is chosen such that the condition number satisfies $\kappa(Q) = 100$. 
Finally, $Q$ is scaled so that $\operatorname{tr}(Q)/n = 1$, ensuring comparable magnitude across different sparsity levels. 
The vector $d \in \mathbb{R}^n$ is generated independently from a standard Gaussian distribution and then normalized to satisfy $\|d\|_2 = 1.$
 
All instances are solved using identical solver settings. We report the following
metrics. ``Root gap'' is the relative optimality gap at the root node. ``End UB'' and ``End LB'' are the final upper and lower bounds returned by the solver, and ``End gap'' is the final relative optimality gap. In tables where the compared formulations attain the same final upper bound up to numerical tolerance, we omit the ``End UB'' and ``End LB'' columns and report only the ``End gap,'' since the absolute bound values provide little additional information for the computational comparison. The column ``Active (\%)'' reports \(100\|z\|_0/n\), the percentage of selected coordinates. ``Nodes'' is the number of branch-and-bound nodes explored, and
``Time (s)'' is the solution time in seconds. The notation ``TL'' indicates that
the solver reached the time limit.

For all experiments involving big-$M$ formulations, a common heuristic to choose $M_i$ is to initialize the parameter $M_i$ with a conservative value and then tighten it using the solution of a continuous relaxation \cite{park2017bayesian,manzour2021integer,kucukyavuz2023consistent,xu2025integer}. Specifically, if $x^{\mathrm{relax}}$ denotes the relaxation solution, we update
\[
M \leftarrow \min\Bigl\{M,\; 2\max_i |x_i^{\mathrm{relax}}|\Bigr\}.
\]
This tightening is applied uniformly across all experiments as a practical way to reduce the weakness of the big-$M$ constraints.

\paragraph{Problem size $n$.}

\begin{table}[t]
\centering
\caption{Average computational results for QP with indicators with fixed graph sparsity \(\delta=0.01\).}
\label{tab:qp_problem_size_delta001_full}
\resizebox{\columnwidth}{!}{\begin{tabular}{ccrrrrr}
\toprule
$n$ & Formulation & Root gap & End gap & Active (\%) & Nodes & Time (s) \\
\midrule
100  & CORe     & 0.1\%& 0& 13.6\%& 8& 0.08
\\
100  & Original & 19.5\%& 0& 13.6\%& 25595& 0.70
\\
\midrule
200  & CORe     & 1.8\%& 0& 13.6\%& 152& 0.16
\\
200  & Original & 58.3\%& 29.2\%& 13.4\%& 48282010& TL\\
\midrule
300  & CORe     & 1.2\%& 0& 14.1\%& 127& 0.18
\\
300  & Original & 60.6\%& 42.2\%& 13.8\%& 24356213& TL\\
\midrule
500  & CORe     & 0.5\%& 0& 11.7\%& 84.8& 0.43
\\
500  & Original & 63.6\%& 52.2\%& 11.7\%& 10426955& TL\\
\midrule
800  & CORe     & 0.1\%& 0& 10.1\%& 17.6& 3.10
\\
800  & Original & 62.1\%& 56.9\%& 10.1\%& 2639442& TL\\
\midrule
1000 & CORe     & 40.0\%& 0& 9.9\%& 1905& 429.63
\\
1000 & Original & 64.2\%& 60.7\%& 10.0\%& 1207135& TL\\
\bottomrule
\end{tabular}}
\end{table}

Table~\ref{tab:qp_problem_size_delta001_full} reports the performance of the CORe and original formulations for QP with indicators under a sparse graph structure (\(\delta=0.01\)) as the problem size \(n\) increases. The results demonstrate a clear and consistent advantage of the CORe formulation across all instance sizes. For small instances (\(n=100\)), both formulations solve the problem to optimality, although CORe already requires several orders of magnitude fewer branch-and-bound nodes. As \(n\) increases, the gap between the two formulations becomes dramatic. The CORe formulation maintains near-zero root gaps and consistently solves all instances to optimality, with solution times remaining below one second up to \(n=500\) and only modestly increasing for larger instances. In contrast, the original formulation exhibits large root gaps (above \(58\%\)) and fails to close the optimality gap for all instances with \(n \geq 200\), hitting the time limit in every such case and leaving substantial final gaps.

The difference in computational effort is further reflected in the sizes of the branch-and-bound trees. CORe requires only tens to a few thousand nodes for the largest instance, whereas the original formulation explores millions to tens of millions of nodes without reaching optimality. Notably, the number of nodes required by CORe remains extremely small up to \(n=800\), suggesting that the enhanced formulation effectively prunes most suboptimal choices of the indicator variables, leaving only a few candidates that require branching and ultimately leading to far fewer nodes in the branch-and-bound tree. Overall, these results indicate that for sparse interaction structures, the CORe formulation is more scalable than the standard big-$M$ formulation.

These gains appear to stem not only from a stronger lower bound but also from the CORe formulation being much more informative to the solver during presolve. In particular, the added threshold and stationarity relations explicitly expose the regime logic linking the continuous variables and the binary indicators, allowing presolve to tighten bounds, fix variables, eliminate redundant constraints, and simplify large portions of the model before branching. The same structural information can also support stronger implied inequalities and cut generation during the search. This effect is especially pronounced in sparse instances, where the local coordinate-wise conditions are highly informative, and helps explain why CORe often solves near the root even when the standard big-$M$ formulation still requires exploring a very large branch-and-bound tree.

\paragraph{Graph sparsity level $\delta$.}

\begin{table}[t]
\centering
\caption{Average computational results for QP with indicators with fixed problem size and varying graph sparsity $\delta$.}
\label{tab:qp_graph_sparsity}
\resizebox{\columnwidth}{!}{\begin{tabular}{ccrrrrr}
\toprule
$\delta$ & Formulation & Root gap & End gap & Active (\%) & Nodes & Time (s) \\
\midrule
0.01 & CORe     & 0& 0& 5.04\% & 1        & 1.79 \\
0.01 & Original & 53.0\%& 40.1\%& 5.04\% & 8556024  & TL \\
\midrule
0.02 & CORe     & 0& 0& 4.48\% & 1        & 1.08 \\
0.02 & Original & 50.5\%& 38.6\%& 4.44\% & 7952371  & TL \\
\midrule
0.05 & CORe     & 17.9\%& 0& 4.04\% & 6203     & 166.04 \\
0.05 & Original & 51.9\%& 40.4\%& 4.04\% & 5327534  & TL \\
\midrule
0.10 & CORe     & 21.7\%& 0& 3.48\% & 8897     & 111.82 \\
0.10 & Original & 52.1\%& 41.8\%& 3.48\% & 3004007  & TL \\
\midrule
0.20 & CORe     & 41.4\%& 0& 4.24\% & 15604    & 459.59 \\
0.20 & Original & 53.7\%& 42.7\%& 4.24\% & 2144167  & TL \\
\midrule
0.50 & CORe     & 51.6\%& 0& 3.36\% & 20942    & 691.38 \\
0.50 & Original & 54.1\%& 43.9\%& 3.36\% & 1383747  & TL \\
\bottomrule
\end{tabular}}
\end{table}

Table~\ref{tab:qp_graph_sparsity} reports the performance of the CORe and original formulations as the graph sparsity parameter \(\delta\) varies while the problem size is fixed at $n=500$. The results reveal a clear dependence of the CORe formulation on the sparsity structure of the underlying graph. When the graph is highly sparse (\(\delta = 0.01, 0.02\)), CORe exhibits extremely strong performance, solving all instances to optimality almost immediately with a single branch-and-bound node and negligible computation time. In contrast, the original formulation exhibits large root gaps (around \(50\%\)) and fails to close the optimality gap within the time limit, even after exploring millions of nodes.

As the graph becomes denser, the performance of CORe gradually deteriorates. While it continues to solve all instances to optimality, the root gap increases significantly (from near zero to above \(50\%\)), and both the number of nodes and solution time grow substantially. For example, at \(\delta = 0.50\), CORe requires over \(20{,}000\) nodes and several hundred seconds to solve the problem. Nevertheless, it consistently outperforms the original formulation, which remains unable to close the gap at all sparsity levels and continues to reach the time limit, resulting in large final gaps (around \(40\%\)).

These results highlight an important structural property of the CORe formulation: its effectiveness is strongly tied to sparsity. In highly sparse settings, the coordinate-wise optimality conditions nearly determine the global structure, leading to extremely small branch-and-bound trees. As the graph becomes denser, interactions between coordinates weaken the effectiveness of local conditions, leading to larger branch-and-bound trees and increased computational effort. However, even in these more challenging regimes, CORe maintains a significant advantage over the standard big-$M$ formulation.

\paragraph{Sparsity penalty level $\lambda$.}

\begin{table}[t]
\centering
\caption{Average computational results for QP with indicators under varying penalty parameter $c_1$.}
\label{tab:qp_tau_sparsity}
\resizebox{\columnwidth}{!}{\begin{tabular}{ccrrrrr}
\toprule
$c_1$ & Formulation & Root gap & End gap & Active (\%) & Nodes & Time (s) \\
\midrule
0.01 & CORe     & 100.0\%& 5.3\%& 43.48\% & 1646377  & TL \\
0.01 & Original & 79.8\%& 66.4\%& 43.72\% & 7507499  & TL \\
\midrule
0.02 & CORe     & 27.6\%& 1.6\%& 29.00\% & 6185927  & TL \\
0.02 & Original & 75.8\%& 63.6\%& 29.04\% & 10060650 & TL \\
\midrule
0.05 & CORe     & 0.5\%& 0& 13.04\% & 149      & 2.49 \\
0.05 & Original & 64.6\%& 54.1\%& 12.96\% & 8012889  & TL \\
\midrule
0.08 & CORe     & 0& 0& 7.08\%  & 1        & 0.20 \\
0.08 & Original & 57.6\%& 45.3\%& 7.00\%  & 10523945 & TL \\
\midrule
0.12 & CORe     & 0& 0& 4.00\%  & 1        & 1.20 \\
0.12 & Original & 49.2\%& 35.2\%& 4.00\%  & 9643881  & TL \\
\bottomrule
\end{tabular}}
\end{table}

Table~\ref{tab:qp_tau_sparsity} reports the performance of the CORe and original formulations as the parameter \(c_1\), which controls the sparsity parameter $\lambda$, varies where \(\lambda = c_1 \log(n)/n\), following a standard high-dimensional model-selection scaling that balances the \(O(1/n)\) per-coordinate signal size with a \(\log(n)\) correction for selecting among \(n\) variables. Here, the problem size is fixed at $n = 500$. As \(c_1\) increases, the effective penalty becomes stronger, which encourages sparser solutions; this is reflected in the decrease of the number of nonzeros from about \(218\) at \(c_1=0.01\) to \(20\) at \(c_1=0.12\). The results show that the performance of CORe improves markedly as the penalty increases. For very small values of \(c_1\), where the penalty is weak and the resulting solution is relatively dense, the problem remains challenging even for CORe: at \(c_1=0.01\), CORe reaches the time limit, but still substantially outperforms the original formulation by attaining a much smaller final gap (\(5.3\%\) versus \(66.4\%\)) and exploring far fewer nodes. At \(c_1=0.02\), CORe nearly solves the problem to optimality, reducing the final gap to \(1.6\%\), while the original formulation again times out with a large gap.

As \(c_1\) increases, the penalty parameter \(\lambda=c_1\log(n)/n\) becomes larger,
which encourages sparser optimal solutions. This tends to make the problem easier for
both formulations, but the effect is much more pronounced for CORe because the
penalty also appears in the coordinate-wise threshold
\(\tau_i=\sqrt{2\lambda_i Q_{ii}}\). Larger values of \(\lambda_i\) enlarge the
off-regime region \(|g_i|\le \tau_i\), allowing more coordinates to be certified as
inactive through the CORe conditions. This effect is reflected in
Table~\ref{tab:qp_tau_sparsity}. Once \(c_1\) becomes moderate, the advantage of
CORe becomes dominant. For \(c_1 \geq 0.05\), CORe solves all instances to
optimality, and its computational effort drops sharply: the node count decreases from
millions to only \(149\) nodes at \(c_1=0.05\), and to a single node at
\(c_1=0.08\) and \(c_1=0.12\). In contrast, the original formulation continues to
exhibit weak root relaxations, explores millions of nodes in every setting, and fails
to close the gap within the time limit. Overall, these results suggest that CORe is
particularly effective when many coordinates can be ruled out early in the
branch-and-bound process.

\paragraph{Two-dimensional Markov random fields.}

To further evaluate the proposed reformulation on a structured sparse quadratic model beyond random graphs considered earlier, we consider a two-dimensional Gaussian Markov random field (2D-GMRF) image denoising problem following the experimental setup of \cite{he2024comparing}. Here, the term ``image'' refers to a \(p\times p\) array of signal values: each entry \(X_{ij}\) can be interpreted as the intensity of a pixel, and the goal is to recover the latent signal \(X\) from noisy observations. The grid structure of the resulting sparsity graph induces spatial dependence, since neighboring pixels are expected to have similar values.

Specifically, we generate the latent signal \(X\in\mathbb{R}^{p\times p}\) as follows. Let \(s\) denote the spike size and \(h\) denote the number of Gaussian shocks. We initialize \(X=0\) and then add \(h\) local Gaussian shocks. For each shock, we choose the upper-left corner \((i_0,j_0)\) of an \(s\times s\) block uniformly at random from
\[
\{1,\ldots,p-s+1\}\times \{1,\ldots,p-s+1\}.
\]
We then generate \(W\in\mathbb{R}^{s\times s}\) with $\operatorname{vec}(W)\sim N(0,\Omega_s^{-1}),$
where \(\Omega_s\in\mathbb{R}^{s^2\times s^2}\) is defined on the \(s\times s\) grid by
$(\Omega_s)_{uv,uv}=4$ for every $u,v\in [s]$, and $
(\Omega_s)_{uv,u'v'}=-1 $ if $(u,v)$ and  $(u',v')$ are horizontally or vertically adjacent. All other entries of $\Omega$ are set to zero. The shock is added to the selected block:
\[
X_{i_0:i_0+s-1,\;j_0:j_0+s-1}
\leftarrow
X_{i_0:i_0+s-1,\;j_0:j_0+s-1}+W.
\]
The resulting signal is sparse at the pixel level, with nonzero values concentrated in a small number of contiguous \(s\times s\) blocks. Finally, we observe
\[
\widetilde X_{ij}=X_{ij}+\epsilon_{ij},\qquad
\epsilon_{ij}\overset{\mathrm{i.i.d.}}{\sim}N(0,\sigma^2).
\]
Given the noisy observations \(\widetilde X\), we estimate the latent signal by solving the exact \(\ell_0\)-regularized GMRF recovery problem
\[
\begin{aligned}
    \min_{X\in\mathbb{R}^{p\times p}} \;
\sum_{i=1}^{p}\sum_{j=1}^{p} \frac{1}{\sigma^2}(\widetilde{X}_{ij}-X_{ij})^2
+ \sum_{i=1}^{p-1}\sum_{j=1}^{p} (X_{ij}-X_{i+1,j})^2
+\\ \sum_{i=1}^{p}\sum_{j=1}^{p-1} (X_{ij}-X_{i,j+1})^2
+ \lambda \|X\|_0 .
\end{aligned}
\]
The first term measures fidelity to the observed noisy data. The second and third terms are the GMRF smoothness penalties: they penalize differences between vertically and horizontally adjacent pixels, respectively, thereby encouraging neighboring pixels to have similar values. The last term penalizes the number of nonzero pixels and reflects the prior that the true signal is spatially localized and sparse.

Let \(n=p^2\), and vectorize the matrices $X$ and $\widetilde X$ as
\[
x=\operatorname{vec}(X)\in\mathbb{R}^n,\qquad
\tilde x=\operatorname{vec}(\widetilde X)\in\mathbb{R}^n.
\]
Introducing binary variables \(z\in\{0,1\}^n\) to encode the support of \(x\), and dropping the additive constant term, the estimator can be written in the standard quadratic form
\[
\min_{x,z} \; \frac{1}{2}x^\top Qx + c^\top x + \lambda\mathbf{1}^\top z
\quad \text{s.t.} \quad
x_i(1-z_i)=0,\; i=1,\ldots,n .
\]
Here, \(Q\) and \(c\) are obtained by collecting the quadratic and linear terms in
\[
\frac{1}{\sigma^2}\|\tilde x-x\|_2^2
+
\sum_{i=1}^{p-1}\sum_{j=1}^{p} (X_{ij}-X_{i+1,j})^2
+
\sum_{i=1}^{p}\sum_{j=1}^{p-1} (X_{ij}-X_{i,j+1})^2 .
\]
In particular, $c=-\frac{2}{\sigma^2}\tilde x.$
The matrix \(Q\) is sparse because each pixel is coupled only with its horizontal and vertical neighbors. It is also positive definite, since the squared-error term contributes a positive diagonal quadratic term and the neighboring-difference penalties are positive semidefinite.

This instance class is useful for evaluating CORe beyond random sparse problems. Although the quadratic matrix is sparse due to local pixel interactions, its sparsity pattern is not random but follows a two-dimensional grid. 

In the experiments in Table~\ref{tab:2dmrf_results}, we compare the original indicator formulation with the CORe reformulation on this instance class. We evaluate statistical recovery using the true positive rate (TPR), false positive rate (FPR), and $F_1$-score, computed with respect to the true support of the signal. We set the $\ell_0$ regularization parameter (equivalent to the sparsity penalty level) to $\lambda = 0.5 \log(n).$
The constant \(0.5\) was chosen from the grid \(\{0.1,0.2,\ldots,1.0\}\) based on the overall $F_1$ score on the generated 2D-MRF instances. Since statistical recovery is not the primary focus of this experiment, this simple tuning rule is sufficient for producing representative sparse recovery instances for comparing the formulations.

\begin{table}[t]
\centering
\caption{Average computational results for the 2D-MRF instances under different noise levels.}
\label{tab:2dmrf_results}
\resizebox{\columnwidth}{!}{\begin{tabular}{cccrrrrrrr}
\toprule
$n$ & $\sigma^2$ & Formulation & End gap & TPR & FPR & $F_1$ & Active (\%) & Nodes & Time (s) \\
\midrule
100 & 0.01 & CORe     & 0& 0.776 & 0.134 & 0.629 & 24.0\%& 0.8 & 0.02 \\
100 & 0.01 & Original & 0& 0.776 & 0.134 & 0.629 & 24.0\%& 5.4 & 0.28 \\
100 & 0.02 & CORe     & 0& 0.701 & 0.130 & 0.599 & 22.6\%& 1.0 & 0.02 \\
100 & 0.02 & Original & 0& 0.701 & 0.130 & 0.599 & 22.6\%& 2.8 & 0.24 \\
100 & 0.05 & CORe     & 0& 0.633 & 0.088 & 0.623 & 18.6\%& 0.6 & 0.02 \\
100 & 0.05 & Original & 0& 0.633 & 0.088 & 0.623 & 18.6\%& 2.6 & 0.40 \\
\midrule
400 & 0.01 & CORe     & 0& 0.755 & 0.085 & 0.630 & 16.3\%& 1.0 & 0.04 \\
400 & 0.01 & Original & 0& 0.755 & 0.085 & 0.630 & 16.3\%& 34.4 & 1.09 \\
400 & 0.02 & CORe     & 0& 0.689 & 0.073 & 0.616 & 14.5\%& 1.0 & 0.04 \\
400 & 0.02 & Original & 0& 0.689 & 0.073 & 0.616 & 14.5\%& 64.0 & 1.08 \\
400 & 0.05 & CORe     & 0& 0.569 & 0.055 & 0.555 & 10.8\%& 1.0 & 0.04 \\
400 & 0.05 & Original & 0& 0.569 & 0.055 & 0.555 & 10.8\%& 40.8 & 1.57 \\
\midrule
1600 & 0.01 & CORe     & 0& 0.770 & 0.053 & 0.641 & 10.8\%& 1.0 & 0.14 \\
1600 & 0.01 & Original & 0& 0.770 & 0.053 & 0.641 & 10.8\%& 157.4 & 5.79 \\
1600 & 0.02 & CORe     & 0& 0.639 & 0.046 & 0.583 & 9.1\%& 1.0 & 0.14 \\
1600 & 0.02 & Original & 0& 0.639 & 0.046 & 0.583 & 9.1\%& 917.2 & 8.89 \\
1600 & 0.05 & CORe     & 0& 0.487 & 0.040 & 0.491 & 7.3\%& 1.0 & 0.17 \\
1600 & 0.05 & Original & 0& 0.487 & 0.040 & 0.491 & 7.3\%& 2012860.6 & 1982 \\
\bottomrule
\end{tabular}}
\end{table}

Table~\ref{tab:2dmrf_results} shows that the two formulations return the same statistical recovery performance and essentially the same optimal objective values, as expected, since they model the same \(\ell_0\)-regularized estimator. In all tested instances, both formulations solve to zero optimality gap, and the TPR, FPR, \(F_1\)-score, and active fraction are identical across the two formulations. The main difference is computational. CORe solves all instances essentially at the root node, with an average node count close to one and runtimes below \(0.2\) seconds even when \(n=1600\). In contrast, the original formulation requires substantially more branching, and its performance deteriorates as the problem size increases. For example, when \(n=1600\) and \(\sigma^2=0.05\), the original formulation explores over two million nodes and takes \(1982\) seconds, whereas CORe solves the same instances in \(0.17\) seconds on average. 
As $\sigma^2$ increases, the data-fidelity term contributes a smaller diagonal component to $Q$, making the quadratic objective less diagonally dominant relative to the coupling terms. This weakens the information available to the standard big-M relaxation. In contrast, CORe explicitly encodes the coordinate-wise threshold rule, so the solver can still infer many support decisions directly from the local optimality conditions. These results indicate that the coordinate-wise optimality structure exploited by CORe remains highly effective on two-dimensional grid instances, showcasing its applicability beyond random instances.

\subsection{Structural Sensitivity}

In this section, we study how the sparsity structure of $Q$ affects the performance of the CORe framework.

\subsubsection{Star graphs}

We generate synthetic instances using a star-graph structure. For a given problem size \(n\), we construct a sparse matrix \(Q \in \mathbb{R}^{n\times n}\) whose sparsity pattern corresponds to the star graph \(S_n\), with node \(1\) as the center and nodes \(2,\ldots,n\) as leaves. The data generation process is the same as in Section \ref{sec:qp_experiments} except for the weight matrix: for each edge connecting the center node to a leaf, we draw an independent weight
$$
W_{1j} \sim \mathrm{Unif}(0.1,1),
\quad W_{j1}=W_{1j}, \qquad j=2,\ldots,n,
$$
and set all other off-diagonal entries to zero. Using these weights, we construct the corresponding weighted star-graph Laplacian and then normalize the resulting matrix \(Q\) as described in Section \ref{sec:qp_experiments}.

\begin{table}[t]
\centering
\caption{Average computational results for QP with indicators on star graphs as the problem size $n$ varies.}
\label{tab:star_problem_size}
\resizebox{\columnwidth}{!}{\begin{tabular}{ccrrrrr}
\toprule
$n$ & Formulation & Root gap & End gap & Active (\%) & Nodes & Time (s) \\
\midrule
100   & CORe        & 0       & 0        & 10.80\%& 0.2      & 0.02 \\
100   & parametric  & -       & 0        & 10.80\%& -        & 0.07 \\
100   & Original    & 14.0\%& 0        & 10.80\%& 1282.8   & 0.34 \\
\midrule
200   & CORe        & 0       & 0        & 8.30\%& 0.4      & 0.04 \\
200   & parametric  & -       & 0        & 8.30\%& -        & 0.13 \\
200   & Original    & 2.4\%& 0        & 8.30\%& 1065.6   & 0.54 \\
\midrule
300   & CORe        & 0       & 0        & 9.00\%& 0.0      & 0.05 \\
300   & parametric  & -       & 0        & 9.00\%& -        & 0.24 \\
300   & Original    & 4.5\%& 0        & 9.00\%& 730.0    & 0.71 \\
\midrule
500   & CORe        & 0       & 0        & 9.32\%& 0.0      & 0.08 \\
500   & parametric  & -       & 0        & 9.32\%& -        & 0.53 \\
500   & Original    & 1.6\%& 0        & 9.32\%& 7542.0   & 3.05 \\
\midrule
1000  & CORe        & 0       & 0        & 8.62\%& 0.2      & 0.19 \\
1000  & parametric  & -       & 0        & 8.62\%& -        & 1.88 \\
1000  & Original    & 3.8\%& 0.01   & 8.60\%& 8063.8   & 6.57 \\
\midrule
2000  & CORe        & 0       & 0        & 7.47\%& 0.0      & 0.40 \\
2000  & parametric  & -       & 0        & 7.47\%& -        & 7.95 \\
2000  & Original    & 1.4\%& 0.07   & 7.47\%& 307635.8 & 11.08 \\
\midrule
5000  & CORe        & 0       & 0        & 6.22\%& 1.0      & 1.91 \\
5000  & parametric  & -       & 0        & 6.22\%& -        & 42.93 \\
5000  & Original    & 0.6\%& 0.01   & 6.22\%& 1178.8   & 41.93 \\
\midrule
10000 & CORe        & 0       & 0        & 5.48\%& 1.8      & 7.31 \\
10000 & parametric  & -       & 0        & 5.48\%& -        & 179.03 \\
10000 & Original    & 0.5\%& 0.02   & 5.48\%& 16959.7  & TL \\
\bottomrule
\end{tabular}}
\end{table}

Table~\ref{tab:star_problem_size} reports the computational performance on star-graph instances. Across all problem sizes, the CORe formulation solves the instances with zero optimality gap and only a negligible number of branch-and-bound nodes. Its running time remains very small, increasing from 0.02 seconds for $n=100$ to 7.31 seconds for $n=10{,}000$. The specialized parametric algorithm of \cite{bhathena2025parametric} also solves all instances to optimality and produces the same objective values and active-set sizes as CORe. However, its running time grows more quickly with \(n\), reaching 179.03 seconds at \(n=10{,}000\), and is consistently slower than CORe for larger instances. The original big-M formulation also benefits from the simple star structure: although its root gap remains nonzero, it is relatively small for most larger instances, mostly below \(5\%\), and it closes the final gap to zero or near zero for all but the largest instance. Nevertheless, it still requires substantially more branching than CORe, with node counts reaching over \(3\times 10^5\) at \(n=2000\), and it hits the time limit at \(n=10{,}000\). Overall, these results show that star graphs are favorable to both the original formulation and the parametric algorithm, but CORe remains the most scalable approach by combining exactness with near-root solution behavior and very low running times.

\subsubsection{Path graphs}

We generate synthetic instances of quadratic programs with indicator variables using a path-graph structure. For a given problem size $n$, we construct a tridiagonal matrix $Q \in \mathbb{R}^{n \times n}$ whose sparsity pattern corresponds to the path graph $P_n$. 

The data generation process is the same as in Section \ref{sec:qp_experiments} except for the weight matrix: for each edge, we draw an independent weight
\[
W_{i,i+1} \sim \mathrm{Unif}(0.1,1),
\quad W_{i+1,i} = W_{i,i+1},
\]
and set all other off-diagonal entries to zero.

\begin{table}[t]
\centering
\caption{Average computational results for QP with indicators on path graphs as the problem size $n$ varies.}
\label{tab:path_problem_size}
\resizebox{\columnwidth}{!}{\begin{tabular}{ccrrrrr}
\toprule
$n$ & Formulation & Root gap & End gap & Active (\%) & Nodes & Time (s) \\
\midrule
100   & CORe     & 0& 0& 16.70\%  & 1        & 0.08\\
100   & parametric     & -& 0& 16.70\%  & -        & 0.08\\
100   & Original & 59.1\%& 13.2\%& 16.70\%  & 19939440 & TL \\
\midrule
200   & CORe     & 0& 0& 12.30\%  & 1        & 0.11 \\
200   & parametric     & -& 0& 12.30\%  & -        & 0.13\\
200   & Original & 62.2\%& 36.8\%& 12.30\%  & 12222982 & TL \\
\midrule
300   & CORe     & 0& 0& 10.77\%  & 1        & 0.12\\
300   & parametric     & -& 0& 10.77\%  & -        & 0.19\\
300   & Original & 58.8\%& 42.7\%& 10.77\%  & 8957234  & TL \\
\midrule
500   & CORe     & 0& 0& 8.80\%  & 1        & 0.15\\
500   & parametric     & -& 0& 8.80\%  & -        & 0.32\\
500   & Original & 56.9\%& 47.5\%& 8.78\%  & 5828057  & TL \\
\midrule
1000  & CORe     & 0& 0& 7.26\%  & 1        & 0.28\\
1000  & parametric     & -& 0& 7.26\%  & -        & 0.68\\
1000  & Original & 59.1\%& 54.5\%& 7.25\%  & 2598710  & TL \\
\midrule
2000  & CORe     & 0& 0& 5.60\% & 0.8      & 0.48\\
2000  & parametric     & -& 0& 5.60\% & -      & 1.27\\
2000  & Original & 50.7\%& 48.6\%& 5.58\% & 1191785  & TL \\
\midrule
5000  & CORe     & 0& 0& 4.41\% & 1        & 1.34\\
5000  & parametric     & -& 0& 4.41\% & -        & 3.60\\
5000  & Original & 47.8\%& 47.1\%& 4.41\% & 340304   & TL \\
\midrule
10000 & CORe     & 0& 0& 3.86\% & 1        & 2.44\\
10000 & parametric     & -& 0& 3.86\% & -        & 8.19\\
10000 & Original & 41.2\%& 40.9\%& 3.86\% & 139643   & TL \\
\bottomrule
\end{tabular}}
\end{table}

\begin{figure}[t]
    \centering
    \includegraphics[width=0.72\linewidth]{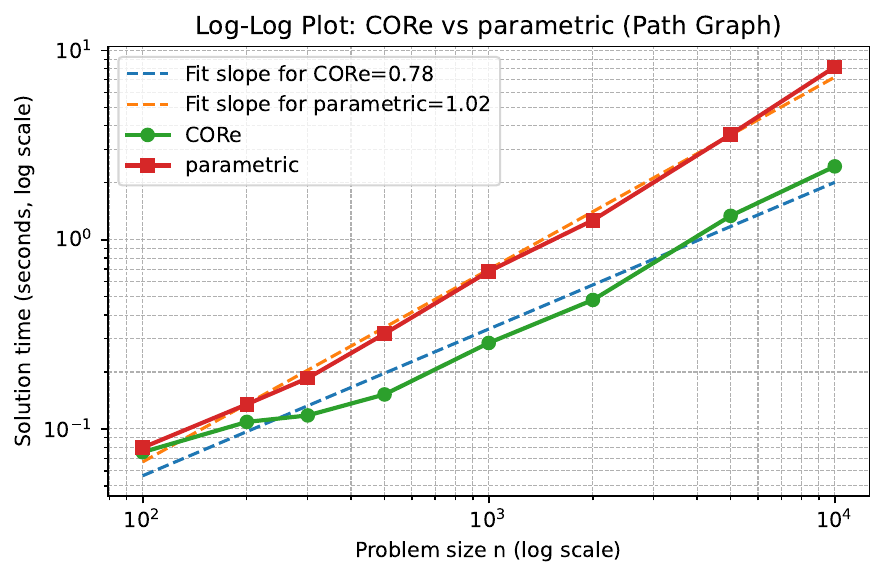}
    \caption{Log-log plot of solution time versus problem size for the CORe formulation and parametric algorithm on path-graph instances.}
    \label{fig:path_loglog}
\end{figure}

Table~\ref{tab:path_problem_size} reports the average computational performance of the original big-$M$ formulation, CORe, and the specialized parametric algorithm of \cite{bhathena2025parametric} on path-graph instances as the problem size $n$ increases from $100$ to $10{,}000$. We use the penalty parameter $\lambda = 0.2\log(n)/n.$
As in the previous experiments, this parameter is fixed to produce a nondegenerate sparse-solution regime rather than tuned for statistical performance. Across the tested sizes, the average active fraction decreases from \(16.70\%\) at \(n=100\) to \(3.86\%\) at \(n=10{,}000\).
It can be observed that CORe solves every instance to proven optimality essentially at the root node: the root gap is zero for all tested values of $n$, the average number of explored nodes is about one, and the runtime increases only mildly from $0.08$ seconds at $n=100$ to $2.44$ seconds at $n=10{,}000$. Moreover, our experiments are consistent with the observation of~\cite{bhathena2025parametric} that the parametric algorithm runs in almost linear time. Nevertheless, CORe consistently outperforms it, with the gap widening as $n$ increases: at $n = 10{,}000$, CORe requires $2.44$ seconds versus $8.19$ seconds for the parametric algorithm. The original big-M formulation, by contrast, performs poorly across all settings: it hits the time limit in every instance, exhibits root gaps of $41.2\%$--$62.2\%$, and terminates with substantial optimality gaps despite exploring many nodes. Overall, these results show that CORe is empirically competitive with---and on this benchmark faster than---a specialized exact algorithm designed for tree-structured problems.

Figure~\ref{fig:path_loglog} gives a complementary view of the path-graph experiments by plotting solution time against problem size on log-log scales. The approximately linear trends indicate stable polynomial-type empirical scaling for both methods over the tested range. In particular, the fitted slope is about $0.78$ for CORe and about $1.02$ for the parametric algorithm, suggesting that CORe exhibits a milder growth in runtime as $n$ increases, while the parametric algorithm scales close to linearly on these instances. Moreover, the CORe curve remains below the parametric curve throughout the entire range of problem sizes, and the gap becomes more visible for larger instances. 

We note that the remarkable performance of CORe on the random path instances does not conflict with Proposition~\ref{prop:path_counterexample_exp}. Rather, the proposition identifies a particular adversarial structure, in which the coefficients and thresholds are aligned so that many regime assignments on the path remain feasible. The random instances in Table~\ref{tab:path_problem_size} do not exhibit this alignment; instead, they lead to sparse solutions in which many inactive coordinates can be certified early by the CORe conditions. To examine this worst-case mechanism computationally, we next consider a family of path instances derived from the construction in Proposition~\ref{prop:path_counterexample_exp}.

\paragraph{Adversarial path graph instances.}
To better understand the worst-case behavior on path graphs, we generate a family of adversarial path instances motivated by Proposition~\ref{prop:path_counterexample_exp}. For each problem size \(n\), we set \(Q\) to be the tridiagonal path matrix with \(Q_{ii}=1\) and \(Q_{i,i+1}=Q_{i+1,i}=\rho\). The worst-case construction corresponds to the limiting choice \(\rho=1/2\). However, for a path matrix with \(Q_{ii}=1\) and constant off-diagonal value \(\rho\), the smallest eigenvalue is approximately \(1-2\rho\) for large \(n\). Thus, choosing \(\rho=1/2\) makes the matrix increasingly ill-conditioned as \(n\) grows. To avoid confounding the combinatorial difficulty with numerical ill-conditioning, we use \(\rho=0.45\), which preserves the same local path interaction structure while keeping \(Q\) uniformly positive definite.

This modification changes the threshold range needed for the worst-case mechanism. In the corresponding independent-set construction (see Appendix~\ref{sec:proof_counter} for more details), an inactive coordinate with exactly one active neighbor has \(|g_i|=1-\rho\). Therefore, the inactive CORe certificate \(|g_i|\le \tau\) requires \(\tau>1-\rho\). With \(\rho=0.45\), this lower boundary becomes \(1-\rho=0.55\). We therefore vary \(\tau\in\{0.6,0.7,0.8,0.9\}\), and set \(\lambda_i=\tau^2/2\) for all \(i\), since \(Q_{ii}=1\) and hence the CORe threshold is \(\tau_i=\sqrt{2\lambda_i}=\tau\). Under this choice, \(\tau=0.6\) is close to the boundary of the inactive certificate, while larger values of \(\tau\) provide increasingly wider certificate margins.

\begin{table}[t]
\centering
\caption{Average computational results for QP with indicators on adversarial path graphs as the threshold \(\tau\) and problem size \(n\) vary.}
\label{tab:path_adversarial}
\resizebox{\columnwidth}{!}{\begin{tabular}{cccrrrrr}
\toprule
\(\tau\) & \(n\) & Formulation & Root gap & End gap & Active (\%) & Nodes & Time (s) \\
\midrule
0.6 & 100 & CORe & 10.8\% & 1.9\% & 51.00\% & 16400931 & TL \\
0.6 & 100 & parametric & - & 0 & 51.00\% & - & 0.09 \\
0.6 & 100 & Original & 41.8\% & 17.9\% & 51.00\% & 21757580 & TL \\
\midrule
0.6 & 1000 & CORe & 14.6\% & 10.2\% & 50.10\% & 1896469 & TL \\
0.6 & 1000 & parametric & - & 0 & 50.10\% & - & 0.61 \\
0.6 & 1000 & Original & 89.8\% & 30.9\% & 50.10\% & 2380792 & TL \\
\midrule
0.7 & 100 & CORe & 0 & 0 & 50.00\% & 1 & 1.41 \\
0.7 & 100 & parametric & - & 0 & 50.00\% & - & 0.10 \\
0.7 & 100 & Original & 71.2\% & 31.5\% & 50.00\% & 21269267 & TL \\
\midrule
0.7 & 1000 & CORe & 0 & 0 & 50.00\% & 1 & 0.51 \\
0.7 & 1000 & parametric & - & 0 & 50.00\% & - & 0.53 \\
0.7 & 1000 & Original & 124.0\% & 50.5\% & 50.00\% & 1991237 & TL \\
\midrule
0.8 & 100 & CORe & 0 & 0 & 50.00\% & 1 & 2.05 \\
0.8 & 100 & parametric & - & 0 & 50.00\% & - & 0.09 \\
0.8 & 100 & Original & 161.9\% & 62.0\% & 50.00\% & 19745207 & TL \\
\midrule
0.8 & 1000 & CORe & 0 & 0 & 50.00\% & 1 & 0.49 \\
0.8 & 1000 & parametric & - & 0 & 50.00\% & - & 0.52 \\
0.8 & 1000 & Original & 197.4\% & 91.6\% & 50.00\% & 1772636 & TL \\
\midrule
0.9 & 100 & CORe & 0 & 0 & 50.00\% & 1 & 0.87 \\
0.9 & 100 & parametric & - & 0 & 50.00\% & - & 0.11 \\
0.9 & 100 & Original & 465.1\% & 159.1\% & 50.00\% & 19616860 & TL \\
\midrule
0.9 & 1000 & CORe & 0 & 0 & 50.00\% & 1 & 0.47 \\
0.9 & 1000 & parametric & - & 0 & 50.00\% & - & 0.52 \\
0.9 & 1000 & Original & 627.1\% & 220.1\% & 49.90\% & 1793454 & TL \\
\bottomrule
\end{tabular}}
\end{table}

Table~\ref{tab:path_adversarial} shows that the adversarial construction can make these instances substantially harder for CORe when the threshold is close to the boundary \(1-\rho\). For \(\tau=0.6\), CORe reaches the time limit for both \(n=100\) and \(n=1000\), exploring about \(1.64\times 10^7\) and \(1.90\times 10^6\) nodes, respectively. In contrast, once \(\tau\) is increased to \(0.7\) or larger, CORe again solves the instances at the root node with zero root gap. Interestingly, the active fraction remains essentially unchanged across these values of \(\tau\), staying near \(50\%\). This indicates that the computational difficulty is not explained solely by the sparsity level of the final solution. Rather, the more direct mechanism is the margin in the CORe regime conditions: when \(\tau\) is close to \(1-\rho=0.55\), many inactive coordinates are nearly on the boundary \(|g_i|=\tau\), making the coordinate-wise certificates much less decisive. The specialized parametric algorithm solves all instances quickly, as expected for path graphs, while the original big-M formulation remains weak and fails to close the gap within the time limit in every setting.

\subsubsection{Tree graphs}

\begin{table}[t]
\centering
\caption{Average computational results for QP with indicators on tree graphs as the problem size $n$ varies.}
\label{tab:tree_problem_size}
\resizebox{\columnwidth}{!}{\begin{tabular}{ccrrrrr}
\toprule
$n$ & Formulation & Root gap & End gap & Active (\%) & Nodes & Time (s) \\
\midrule
100   & CORe       & 0& 0& 15.00\%  & 1        & 0.06 \\
100   & parametric & -& 0& 15.00\%  & -        & 0.10 \\
100   & Original   & 59.9\%& 14.9\%& 15.00\%  & 16235418 & TL   \\
\midrule
200   & CORe       & 0& 0& 12.80\%  & 1        & 0.12 \\
200   & parametric & -& 0& 12.80\%  & -        & 0.15 \\
200   & Original   & 62.5\%& 37.3\%& 12.80\%  & 7666071  & TL   \\
\midrule
300   & CORe       & 0& 0& 12.87\%  & 1        & 0.13 \\
300   & parametric & -& 0& 12.87\%  & -        & 0.21 \\
300   & Original   & 61.4\%& 46.8\%& 12.87\%  & 5665235  & TL   \\
\midrule
500   & CORe       & 0& 0& 11.76\%  & 1        & 0.15 \\
500   & parametric & -& 0& 11.76\%  & -        & 0.35 \\
500   & Original   & 61.5\%& 52.5\%& 11.76\%  & 3628496  & TL   \\
\midrule
1000  & CORe       & 0& 0& 9.60\%  & 1        & 0.28 \\
1000  & parametric & -& 0& 9.60\%  & -        & 0.73 \\
1000  & Original   & 63.0\%& 58.5\%& 9.60\%  & 1846712  & TL   \\
\midrule
2000  & CORe       & 0& 0& 9.55\% & 1        & 0.59 \\
2000  & parametric & -& 0& 9.51\% & -        & 1.52 \\
2000  & Original   & 62.6\%& 60.5\%& 9.51\% & 881081   & TL   \\
\midrule
5000  & CORe       & 0& 0& 8.09\% & 1        & 1.41 \\
5000  & parametric & -& 0& 8.07\% & -        & 3.84 \\
5000  & Original   & 64.8\%& 64.4\%& 8.07\% & 258943   & TL   \\
\midrule
10000 & CORe       & 0& 0& 7.31\% & 1        & 2.70 \\
10000 & parametric & -& 0& 7.30\% & -        & 7.74 \\
10000 & Original   & 64.2\%& 64.0\%& 7.31\% & 117797   & TL   \\
\bottomrule
\end{tabular}}
\end{table}

\begin{figure}[t]
    \centering
    \includegraphics[width=0.72\linewidth]{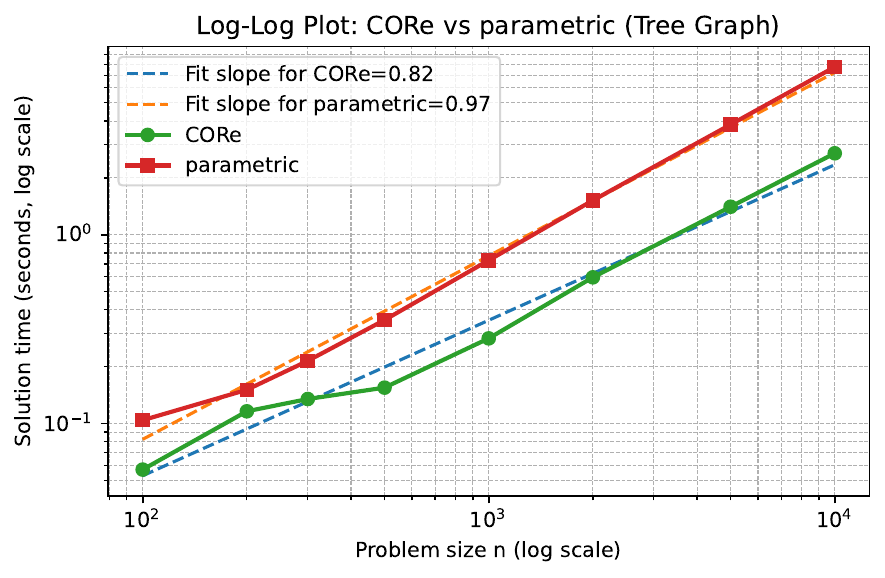}
    \caption{Log-log plot of solution time versus problem size for the CORe formulation and parametric algorithm on tree-graph instances.}
    \label{fig:tree_loglog}
\end{figure}

The data generation process for the tree graphs is the same as in Section \ref{sec:qp_experiments} except that the sparsity pattern of \(Q\) is chosen to be a random tree. For each problem size \(n\), we first sample a uniformly random labeled tree on \(n\) nodes using a Prüfer sequence. Each tree edge \((i,j)\) is assigned an independent weight \(w_{ij}\sim \mathrm{Unif}(0.1,1)\).

As shown in Table~\ref{tab:tree_problem_size}, the three approaches exhibit sharply different behavior on tree graphs. The original big-$M$ formulation fails to solve any of the tested instances within the time limit, and its relaxation remains weak throughout, with root gaps around $60\%$ and large end gaps at termination. In contrast, both CORe and the parametric algorithm solve all instances to proven optimality. Moreover, CORe is consistently the fastest method across all problem sizes, solving essentially at the root node and requiring only a fraction of a second for small and medium instances and only a few seconds even when $n=10{,}000$. The parametric algorithm also scales well and remains exact, but is uniformly slower than CORe by a moderate constant factor. Overall, Table~\ref{tab:tree_problem_size} shows that on tree-structured instances, CORe substantially outperforms the standard formulation and is also competitive with, and in these experiments faster than, the specialized parametric algorithm.

Figure~\ref{fig:tree_loglog} further illustrates the scalability of CORe and the parametric algorithm on tree graphs. Both curves are close to linear on the log-log scale, indicating subquadratic empirical runtime growth as the problem size increases. At the same time, the CORe curve lies consistently below the parametric curve over the entire tested range, confirming its lower runtime across all tested instance sizes. The fitted slopes are approximately $0.82$ for CORe and $0.97$ for the parametric algorithm, suggesting that CORe not only has a smaller runtime constant but also exhibits slightly better empirical scaling in the tested range.

\subsection{Robust Single-Index Models}\label{subsec:exp-single-index}

We demonstrate that the benefits of CORe extend beyond the basic quadratic setting, focusing on the robust classification task via logistic regression
introduced in Section~\ref{subsec:classification}. We construct the test instances from the Breast Cancer Wisconsin diagnostic dataset. For each sample size \(n\in \{50,100,200,500\}\), we use the first $n$ observations and all available covariates. The covariates are standardized separately for each value of $n$ by subtracting the sample mean and dividing by the sample standard deviation. The binary response labels are kept in their original form. We do not synthetically corrupt the labels; instead, the robust formulation determines which observations should be treated as contaminated through the binary variables. Since the purpose of this experiment is to compare formulations rather than tune predictive performance, we set the contamination penalty uniformly to $\lambda = 20\log(n)/n$. This choice yields instances with a modest but nonzero number of selected contaminated observations, avoiding degenerate cases in which nearly all or none of the observations are excluded. For the original formulation, we use \(M_i=1\) in the big-$M$ constraints on the correction variables. 

Table~\ref{tab:single_index_model} reports the computational results, where the CORe formulation yields the reformulation \eqref{eq:robust_logistic_core}. Under a one-hour time limit, the CORe formulation consistently outperforms the original big-$M$ formulation in terms of final solution quality. Although both formulations start with large root gaps, CORe makes substantially more progress during branch-and-bound. The final optimality gaps are much smaller for CORe across all problem sizes; in particular, for $n=100$ and $n=200$, CORe closes the gap to below $0.5\%$, while the original formulation still terminates with gaps above $30\%$. Even for the largest instance, $n=500$, CORe reduces the final gap from $84.8\%$ to $48.6\%$. Interestingly, the number of explored nodes is not uniformly smaller. For $n\le 200$, CORe may even explore more nodes, yet it still produces substantially tighter final bounds. This suggests that, for this class of instances, the main benefit of CORe does not come simply from reducing the search tree size, but rather from more effectively encoding the correct regime logic for each observation, leading to stronger subproblems and improved branch-and-bound performance. Moreover, as expected and noted in Remark~\ref{remark:CF-single-index}, CORe is less effective for single-index models than for quadratic programs with indicators.

\begin{table}[t]
\centering
\caption{Computational results for the robust single-index model comparing the original formulation and the CORe formulation.}
\label{tab:single_index_model}
\small
\resizebox{\columnwidth}{!}{\begin{tabular}{ccrrrrrrr}
\toprule
$n$ & Formulation & Root gap & End UB & End LB & End gap & Active (\%) & Nodes & Time (s) \\
\midrule
50  & Original & 100.0\%& 14.872 & 6.693 & 55.0\%& 10.00\% & 7,562,196 & TL\\
50  & CORe  & 91.9\%& 14.502 & 13.516 & 6.8\%& 6.00\%  & 8,741,463 & TL\\
\midrule
100 & Original & 100.0\%& 8.030  & 5.568 & 30.7\%& 5.00\%  & 4,532,493 & TL\\
100 & CORe  & 90.1\%& 7.733  & 7.707 & 0.3\%& 4.00\%  & 7,054,500 & TL\\
\midrule
200 & Original & 100.0\%& 5.123  & 3.398 & 33.7\%& 2.00\%  & 2,131,796 & TL\\
200 & CORe  & 100.0\%& 4.847  & 4.837 & 0.2\%& 2.00\%  & 3,444,753 & TL\\
\midrule
500 & Original & 100.0\%& 7.615  & 1.160 & 84.8\%& 3.20\% & 1,052,618 & TL\\
500 & CORe  & 100.0\%& 6.600  & 3.394 & 48.6\%& 1.80\%  & 525,868   & TL\\
\bottomrule
\end{tabular}}
\end{table}

\subsection{Real-world Accelerometer Data}

We next evaluate CORe on a real-world accelerometer dataset collected from a wearable sensor worn by a participant across a variety of daily activities \cite{casale2011human,casale2012personalization}. The dataset consists of a sequence of acceleration measurements recorded over time. Intuitively, the signal reflects the intensity of the participant's body motion: periods of little movement tend to produce relatively stable acceleration values, whereas activities such as walking, climbing stairs, or other physical motions generate larger and more rapidly changing measurements. As a result, the time series exhibits alternating periods of low and high activity, along with substantial measurement noise and occasional abrupt fluctuations. Such data naturally arise in activity recognition and human motion monitoring applications, where the objective is to recover an underlying activity signal from noisy sensor observations. The temporal nature of the measurements induces a path structure, as neighboring time points are expected to exhibit similar activity levels, making this dataset a natural real-world test case for sparse and smooth signal estimation models.

This patient was “working at a computer” until timestamp 4,415; then engaged in “standing up, walking, and going upstairs” until timestamp 4,735; followed by “standing” from timestamp 4,735 to 5,854, from 8,072 to 9,044, and again from 9,045 to 9,720. Subsequently, they were “walking” from timestamp 5,854 to 8,072; involved in “going up or down stairs” from timestamp 9,044 to 9,435; “walking and talking with someone” from timestamp 9,720 to 10,430; and “talking while standing” from timestamp 10,457 to 13,800 (with the status between timestamps 10,430 and 10,457 being unknown).

We partition the accelerometer signal into non-overlapping blocks of $K$ consecutive observations and introduce one latent state \(x_t\) for each block. Thus, \(t=1,\ldots,T\) indexes a coarser time scale, where $T$ is the number of blocks. This is natural because physical activities typically persist across multiple sensor readings, whereas individual measurements can be noisy. Following the preprocessing in \cite{bhathena2025parametric}, we set \(K=10\), so each latent state represents ten consecutive accelerometer measurements. We can model the problem as the following problem:

\begin{equation}
\label{eq:accelerometer_nonrobust}
\begin{aligned}
\min_{x,z}\quad
& \sum_{t=1}^{T}\sum_{k=1}^{K}
\left(y_{k,t}-x_t\right)^2
+ \frac{1}{2}x_1^2 + \frac{1}{2} \sum_{t=2}^{T}\left(x_t-x_{t-1}\right)^2
+ \lambda \sum_{t=1}^{T}z_t \\
\text{s.t.}\quad
& x_t(1-z_t)=0,
\qquad t=1,\ldots,T,\\
& z_t\in\{0,1\},
\qquad t=1,\ldots,T.
\end{aligned}
\end{equation}

\begin{figure}[t]
\centering
\includegraphics[width=0.9\columnwidth]{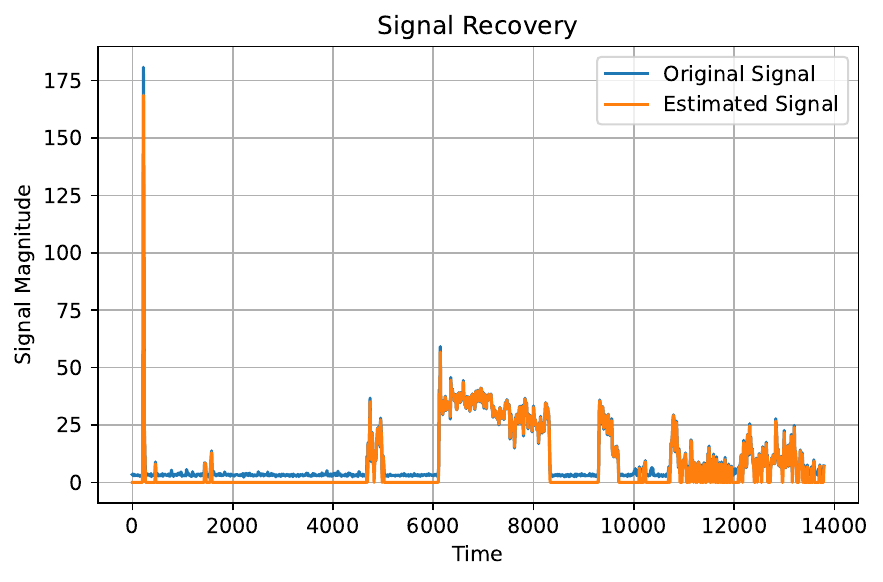}
\caption{Signal recovery on the real-world accelerometer dataset.}
\label{fig:accelerometer_nonrobust}
\end{figure}

Figure~\ref{fig:accelerometer_nonrobust} shows the recovered signal for the accelerometer instance. The estimate produced by CORe captures the main temporal pattern while filtering out near-zero and high-frequency variation. This instance is solved to proven optimality in \(0.1\) seconds using CORe, compared with \(0.7\) seconds using the parametric algorithm of \cite{bhathena2025parametric}. This demonstrates that CORe can efficiently recover a sparse and smooth latent signal from real sensor data.

\section{Conclusion}
\label{sec:conclusion}
We introduced the Coordinate Optimality Reformulation (CORe) framework for
mixed-integer convex programs with indicator variables. The main idea is to exploit the coordinate-wise analytic structure already present in the continuous subproblem. By comparing the inactive and active regimes of each coordinate, CORe derives optimality conditions, closed-form coordinate minimizers, and threshold rules that can be embedded directly into the mixed-integer formulation. These constraints remove regime assignments that cannot occur at global optimality, while preserving
all globally optimal solutions.

We showed the broad applicability of this framework on indicator-constrained models. For quadratic programs with indicators, CORe yields explicit three-regime disjunctions based on coordinate-wise optimality. The same logic also extends to structured settings, such as quadratic programs with unbalanced bipartite structure, and to nonlinear models, such as robust single-index formulations. When suitable bounds are available, these disjunctions can be encoded through strong extended formulations, thereby exposing structure that is absent from standard big-$M$ formulations.

The computational results demonstrate that this additional structure can lead to substantial improvements in branch-and-bound performance. Across the tested quadratic instances, CORe consistently outperforms the original big-$M$ formulation, often solving instances at or near the root node. On path- and tree-structured instances, CORe is also competitive with specialized parametric algorithms and is faster on the tested instances. The robust single-index experiments further show that the benefits of CORe are not limited to quadratic objectives: even when the root gaps
remain large, the CORe formulation produces stronger subproblems and substantially better final bounds.

Several directions remain open for future work. One direction is to characterize more general classes of mixed-integer convex problems that admit useful coordinate-wise optimality certificates. Another is to extend the framework to settings with additional constraints that couple the indicator-controlled coordinates, where coordinate-wise arguments must be combined with dual or sensitivity information. Finally, it would be
interesting to study branching rules and presolve techniques that are designed specifically to exploit the regime structure induced by CORe.

\section*{Declarations}
\textbf{Conflicts of interest} The authors declare that they have no conflict of interest.

\section*{Data Availability}
The datasets generated and analyzed in this study are available at:\\
\url{https://github.com/AtomXT/QP_indicator}.

\section*{Code Availability}
The source code and documentation used for the computational experiments are available at:\\
\url{https://github.com/AtomXT/QP_indicator}.

\bibliographystyle{plain}
\bibliography{reference}

\appendix

\section{Proof of Proposition~\ref{prop:path_counterexample_exp}}
\label{sec:proof_counter}

\begin{proof}

Let \(Q\in\mathbb R^{n\times n}\) be the tridiagonal matrix
\[
Q_{ii}=1 \quad (i=1,\dots,n),
\qquad
Q_{i,i+1}=Q_{i+1,i}=\frac12 \quad (i=1,\dots,n-1),
\]
and all other entries equal to zero. Let
\[
c_i=-1,
\qquad
\tau_i=\tau \in \left(\frac12,1\right),
\]
and choose \(\lambda_i=\tau^2/2\) for all \(i=1,\dots,n\). Assume \(M\ge 1\). Then the number of feasible integer leaves of the CORe formulation is at least the number of maximal independent sets of the path graph \(P_n\), and hence grows exponentially in \(n\).

We divide the proof into three steps.

\medskip
\noindent
\textbf{Step 1: The instance is valid and has path sparsity.}
By construction, the sparsity graph of \(Q\) is the path graph \(P_n\). Moreover, \(Q\) is positive definite: its eigenvalues are
\[
\lambda_k(Q)=1+\cos\!\Bigl(\frac{k\pi}{n+1}\Bigr),
\qquad k=1,\dots,n,
\]
and therefore \(\lambda_k(Q)>0\) for all \(k\). Hence \(Q\succ 0\) and \(Q_{ii}>0\) for every \(i\).

For this instance, 
\[
g_i
=
c_i+\sum_{j\neq i}Q_{ij}x_j
=
-1+\frac12 x_{i-1}+\frac12 x_{i+1}.
\]
Since \(Q_{ii}=1\), the three CORe regimes are
\begin{align*}
\mathcal D_{i, \mathrm{CF}}^0 &:= \{x_i=0,\; |g_i|\le \tau\},\\
\mathcal D_{i, \mathrm{CF}}^+ &:= \{x_i=-g_i,\; g_i\ge \tau\},\\
\mathcal D_{i, \mathrm{CF}}^- &:= \{x_i=-g_i,\; g_i\le -\tau\}.
\end{align*}

\medskip
\noindent
\textbf{Step 2: Every maximal independent set gives a feasible integer leaf.}
Let \(S\subseteq \{1,\dots,n\}\) be any maximal independent set of the path graph \(P_n\). Define
\[
x_i=
\begin{cases}
1,& i\in S,\\
0,& i\notin S.
\end{cases}
\]
Since \(M_i\ge 1\), this choice satisfies the box constraints \(-M_iz_i\le x_i\le M_iz_i\) once the corresponding binary variables are chosen consistently.

We claim that \(x\) satisfies the CORe disjunction at every coordinate.

\smallskip
\noindent
\emph{Case 1: \(i\in S\).}
Because \(S\) is an independent set, neither neighbor of \(i\) belongs to \(S\). Hence
\[
x_{i-1}=x_{i+1}=0
\]
(with the obvious interpretation at the endpoints), and therefore
\[
g_i=-1.
\]
Thus
\[
x_i=1=-g_i,
\qquad
g_i=-1\le -\tau,
\]
because \(\tau<1\). Therefore \(i\) satisfies \(\mathcal D_{i, \mathrm{CF}}^-\).

\smallskip
\noindent
\emph{Case 2: \(i\notin S\).}
Since \(S\) is maximal, vertex \(i\) must have at least one neighbor in \(S\); otherwise \(i\) could be added to \(S\), contradicting maximality. On a path, \(i\) has at most two neighbors, and because \(S\) is independent, both neighbors may belong to \(S\) only if they are \(i-1\) and \(i+1\). Therefore the number of neighbors of \(i\) that belong to \(S\) is either \(1\) or \(2\). Consequently,
\[
g_i=-1+\frac12 x_{i-1}+\frac12 x_{i+1}
\in \left\{-\frac12,\,0\right\}.
\]
Since \(\tau>\frac12\), it follows that
\[
|g_i|\le \tau.
\]
Because \(x_i=0\), we conclude that \(i\) satisfies \(\mathcal D_{i, \mathrm{CF}}^0\).

\smallskip
Thus, every coordinate satisfies one of the three CORe regimes, so the vector \(x\) is consistent with a complete regime assignment. Hence, every maximal independent set \(S\) yields a feasible integer leaf of the CORe formulation.

\medskip
\noindent
\textbf{Step 3: The number of maximal independent sets of \(P_n\) is exponential.}

Even for path graphs, the number of maximal independent sets grows exponentially: if $a_n$ denotes the number of maximal independent sets of the path $P_n$, then $a_n=a_{n-2}+a_{n-3}$, and hence $a_n=\Theta(\rho^n)$, where
$\rho\approx 1.3247$ is the real root of $\rho^3=\rho+1$
\cite{wilf1986number,furedi1987number}.

Since every maximal independent set yields a feasible integer leaf, the number of feasible integer leaves of the CORe formulation is at least \(a_n\), and is therefore exponential in \(n\). \qed
\end{proof}

\section{Exact Polynomial-Time Algorithm for Quadratic Programs with Unbalanced Bipartite Structure via Geometric Pruning}
\label{sec:poly-algo}
In this section, we present a polynomial-time algorithm for solving Problem~\eqref{problem:low_rank_general} when $m$ is fixed and $G\succ 0$. Without loss of generality, we take $D=I_n$, $G=I_m$, and $b=0$: substituting $\tilde x = D^{1/2}x$ and $\tilde y = G^{1/2}\bigl(y+G^{-1}b\bigr)$ preserves the indicator constraints (up to rescaling the constants $M_i$) and puts the objective in this normalized form, with coupling matrix $V := D^{-1/2}FG^{-1/2}$ and a suitably updated linear coefficient vector $c$. Positive semidefiniteness of the original quadratic form is then equivalent to $I - VV^\top \succeq 0$.

\subsection{Rank-2 case}
To explicitly illustrate the geometric decomposition and candidate enumeration, we first consider the case $m=2$. After the normalization above, and writing the coupling matrix columnwise as $V=[\,u\;\; v\,]$ with $u, v\in \mathbb{R}^n$ such that $I-uu^\top - vv^\top\succ 0$, Problem~\eqref{problem:low_rank_general} reads

\begin{subequations}\label{eq: MIQP_rank2_ext}
	\begin{align}
		\min_{x\in\mathbb{R}^n,z\in\{0,1\}^n, y_1, y_2\in \mathbb{R}}\qquad& \dfrac{1}{2}y_1^2+\dfrac{1}{2}y_2^2 +y_1(u^\top x)+y_2(v^\top x) + \dfrac{1}{2}x^\top x+c^\top x+\lambda^\top z\label{eq: MIQP obj_rank2_ext}\\
		\text{s.t.}\qquad &x_i(1-z_i)=0,\quad   i=1,2\ldots n,\label{eq: MIQP_const_rank2_ext}
	\end{align}
\end{subequations}
where we write the indicator constraints in complementarity form (equivalent to the big-$M$ form for sufficiently large $M_i$) and $c$ denotes the normalized linear coefficient vector.

\begin{theorem}
    The optimization problem~\eqref{eq: MIQP_rank2_ext} can be solved in $O(n^{3})$ time and memory using Algorithm~\ref{alg: rank-2}.
\end{theorem}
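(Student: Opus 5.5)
The plan is to reduce the mixed-integer problem to enumerating the cells of a line arrangement in the $(y_1,y_2)$-plane, and to solve one closed-form convex quadratic in each cell. For fixed $y=(y_1,y_2)$, the objective \eqref{eq: MIQP obj_rank2_ext} separates across coordinates $i$. Each coordinate contributes the one-dimensional problem $\tfrac12 x_i^2 + g_i(y)x_i + \lambda_i z_i$, with $g_i(y)=c_i+u_iy_1+v_iy_2$. By the CORe threshold analysis of Section~\ref{subsec:bipartite_qp} (with $d_i=1$), the optimal coordinate value is $\min\{0,\,\lambda_i - g_i(y)^2/2\}$. So \eqref{eq: MIQP_rank2_ext} is equivalent to minimizing over $y\in\mathbb R^2$
\[
\Psi(y)=\tfrac12\|y\|_2^2+\sum_{i=1}^n \min\Bigl\{0,\;\lambda_i-\tfrac12 g_i(y)^2\Bigr\}.
\]
The $2n$ lines $g_i(y)=\pm\tau_i$, with $\tau_i=\sqrt{2\lambda_i}$, form an arrangement with at most $\sum_{\ell=0}^{2}\binom{2n}{\ell}=O(n^2)$ cells \cite{zaslavsky1975facing}. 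On the closure of each cell the active set $A$ is fixed, and $\Psi$ agrees with the quadratic
\[
\Psi_A(y)=\tfrac12\|y\|^2+\sum_{i\in A}\bigl(\lambda_i-\tfrac12 g_i(y)^2\bigr).
\]
As in the proof of Proposition~\ref{prop:poly_sim}(ii), the branches agree on cell boundaries, so some cell closure contains a global minimizer.

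Next I need convexity of $\Psi_A$ so that each cell subproblem is tractable. Its Hessian is $I_2-V_A^\top V_A$, where $V_A$ collects the rows $(u_i,v_i)$ for $i\in A$. This matrix is positive definite because $I-VV^\top\succ0$ implies $\|V\|<1$, and hence $V_A^\top V_A\preceq V^\top V\prec I$. Rather than solving a constrained QP on each closed cell, I would use the standard pruning observation. The unconstrained minimizer of $\Psi_A$ is $y_A=-(I-V_A^\top V_A)^{-1}V_A^\top c_A$. If the global minimizer $y^\star$ lies in a cell with active set $A$, then $y^\star$ is a local minimizer of $\Psi$. At a point interior to a cell, this forces $y^\star=y_A$ by strict convexity, so it suffices to evaluate $\Psi$ at the candidates $y_A$.

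To handle boundary minimizers, I would also include candidates on each line and at each vertex. On a line $g_j(y)=\pm\tau_j$, the breakpoints of the other lines split it into $O(n)$ intervals. On each interval the restricted objective is a strictly convex univariate quadratic, giving one candidate per interval and $O(n^2)$ in total. The $O(n^2)$ vertices are also candidates. This is a more careful statement of the fact that the global optimum is either a stationary point of some $\Psi_A$ or lies on lower-dimensional faces.

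The complexity bound rests on incremental updating. For each of the $2n$ lines I would sort the intersection points with the other lines in $O(n\log n)$ time. I would then sweep along the line, updating the sums $V_A^\top V_A$ (a $2\times2$ matrix), $V_A^\top c_A$, and $\sum_{i\in A}(\lambda_i-c_i^2/2)$ in $O(1)$ per crossing. Each crossing toggles exactly one index in or out of $A$. The adjacent cells on either side of each segment give every cell's active set, and each candidate $y_A$ and its objective value are then computed in $O(1)$ from the maintained sums.

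The main obstacle is that this naive sweep gives $O(n^2\log n)$ time. That is better than the claimed $O(n^3)$, so the stated bound should follow even with a cruder implementation: for each of the $O(n^2)$ cells, identify $A$ by evaluating the signs of all $n$ functions $g_i$ at an interior point such as a vertex perturbation, then compute $y_A$ in $O(n)$. This gives $O(n^3)$ time. The delicate points are correctness issues: degenerate arrangements with parallel or coincident lines, and choosing interior points of cells robustly. I would handle these by taking a sample point adjacent to each vertex, or along each segment midpoint, for every cell. This yields $O(n^2)$ sample points, each costing $O(n)$, for $O(n^3)$ time. Storing the candidates gives memory within $O(n^3)$. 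Finally, $z$ and $x$ are recovered from the best $y$ via $x_i=-g_i(y)$ for $i\in A$ and $x_i=0$ otherwise.
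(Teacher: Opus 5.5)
Your proposal is correct and follows essentially the paper's argument: project out $(x,z)$ to a piecewise objective in $y$, enumerate the $O(n^2)$ cell patterns of the $2n$-line arrangement by sorting breakpoints along each line, and solve one strictly convex quadratic per pattern. Your $y_A$ is exactly the $y$-part of the paper's fixed-$z$ QP, though the correct formula is $y_A=(I-V_A^\top V_A)^{-1}V_A^\top c_A$, without the minus sign. The extra line and vertex candidates are harmless but unnecessary: $\Psi=\min_A\Psi_A$ pointwise, with $\Psi=\Psi_A$ on the closure of the cell with pattern $A$, so $\min_y\Psi_A(y)$ for any cell whose closure contains $y^\star$ already equals the optimum, even when $y^\star$ lies on a boundary. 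Your rank-one sweep, which computes these values in $O(1)$ per crossing, goes beyond the paper and sharpens its $O(n^3)$ time bound to $O(n^2\log n)$.
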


\begin{proof}
Projecting out $x$ and $z$, problem~\eqref{eq: MIQP_rank2_ext} can be rewritten as

\begin{align}
    &\min_{y_1,y_2} \dfrac{1}{2}y_1^2+\dfrac{1}{2}y_2^2 + \sum_{i=1}^n \underbrace{\min_{x_i(1-z_i)=0} 1/2x_i^2+ (u_iy_1+v_iy_2+c_i)x_i+\lambda_i z_i}_{:=g_i(y_1,y_2)} \notag\\
    =&\min_{y_1,y_2} \dfrac{1}{2}y_1^2+\dfrac{1}{2}y_2^2 + \sum_{i=1}^n g_i(y_1,y_2),\label{eq_g}
\end{align}
where
$$
g_i(y_1,y_2) = \begin{cases}
    0 & -\sqrt{2\lambda_i}\leq u_iy_1+v_iy_2+c_i\leq \sqrt{2\lambda_i}\\
    -\frac{(u_iy_1+v_iy_2+c_i)^2}{2}+\lambda_i & \text{otherwise}.
\end{cases}
$$
Solving~\eqref{eq_g} boils down to finding the pieces of $\{g_i(y_1,y_2)\}$ that contain the optimal solution $(y_1^*,y_2^*)$. 
At first glance, this appears to have exponential complexity: each $g_i(y_1,y_2)$ has two pieces, and to find the pieces that contain the optimal solution, one may need to exhaust all $2^n$ possible combinations of these pieces. However, we show that the majority of these combinations are infeasible for~\eqref{eq_g} and hence can be ``pruned". 

To see this, note that each function $g_i$ divides the $\mathbb{R}^2$-space in two regions with its two supporting lines: for all points in the set $\gC^1_i = \{(y_1,y_2): -\sqrt{2\lambda_i}\leq u_iy_1+v_iy_2+c_i\leq \sqrt{2\lambda_i}\}$, we have $z_i=0$ and $g_i(y_1,y_2)=0$. Similarly, for all points in the set $\gC_i^2 = \mathbb{R}^2\backslash \gC_i^1$, we have $z_i=1$ and $g_i(y_1,y_2) = -\frac{(u_iy_1+v_iy_2+c_i)^2}{2}+\lambda_i$. Consider the set of supporting lines $\gL = \bigcup_{i=1}^n\left\{\ell_{i,1}(y_1,y_2)=0, \ell_{i,2}(y_1,y_2)=0\right\}$, where $\ell_{i,1}(y_1,y_2) = u_iy_1+v_iy_2+c_i - \sqrt{2\lambda_i}$ and $\ell_{i,2}(y_1,y_2) = u_iy_1+v_iy_2+c_i + \sqrt{2\lambda_i}$. 

This decomposes the $\mathbb{R}^2$-space into disjoint polytopes. Within each polytope, the sparsity pattern of $\{z_i\}$ remains the same. Therefore, by counting the number of polytopes, we can characterize the number of feasible sparsity patterns for~\eqref{eq_g}.

\begin{algorithm}[ht]
	\caption{Efficient algorithm for the case $m=2$}\label{alg: rank-2}
	\textbf{Input:} Parameters of the optimization problem~\eqref{eq: MIQP_rank2_ext}\\
	\textbf{Output:} Optimal solution to~\eqref{eq: MIQP_rank2_ext}
	\begin{algorithmic}[1]
        \State Set $\gZ = \{\}$.
        \State Compute the set of supporting lines $\gL$ from~\eqref{eq_g}. \Comment{It can be done in $O(n)$ time}
		\For{$\ell_i = u_iy_1+v_iy_2+c_i\pm \sqrt{2\lambda_i}\in \gL$}
            \State Find the lower and upper bounds using Equation~\eqref{eq_intervals}.
            \State Sort these bounds and obtain the corresponding partial sparsity patterns $\{\bar z^{(k)}\}_{k=1}^{2n-1}\subset \mathbb{R}^{n-1}$.
            \State Obtain the full candidate sparsity patterns $\{z^{(2k-1)}, z^{(2k)}\}_{k=1}^{2n-1}$, as $z^{(2k-1)}_j=z^{(2k)}_j = \bar z^{(k)}_j$ for $j\not=i$, and $z^{(2k-1)}_i=0$, $z^{(2k)}_i=1$.
            \State Set $\gZ\leftarrow \gZ\cup \{z^{(2k-1)}, z^{(2k)}\}_{k=1}^{2n-1}$
		\EndFor
  \State $f^* = +\infty$, $x^* = \{\}$
  \For{$\bar z\in \gZ$}
  \State find the optimal solution and optimal cost $(\bar x, \bar f)$ of~\eqref{eq: MIQP_rank2_ext} subject to $z = \bar z$
  \If{$\bar f<f^*$}
  \State Set $f^*\leftarrow \bar f$ and $x^*\leftarrow \bar x$.
  \EndIf
  \EndFor
  \State Return {$(x^*, f^*)$}
	\end{algorithmic}
\end{algorithm}

\begin{lemma}
    The set of lines in $\gL$ decomposes $\mathbb{R}^2$-space into at most $\max\{n(4n-2),3\}$ polytopes.
\end{lemma}
\begin{proof}
     Each line in $\ell_i\in \gL$ can be the face of at most $4n-2$ polytopes. To see this, note that the set of lines $\gL\backslash \ell_i$ intersects $\ell_i$ in at most $2n-2$ points. This creates at most $2n-1$ segments on $\ell_i$, each of which can be a face of at most two polytopes. 
     Since each polytope has at least one face in the set $\gL$
    and the size of $\gL$ is $2n$, the total number of polytopes is upper bounded by ${2n(4n-2)}$.

    Next, we show that this bound can be improved by a factor of 2. We consider two cases. First, suppose that there exists a polytope that has only one face in $\gL$. This implies that the lines in $\gL$ do not intersect, i.e., they are parallel. In such a scenario, since we have $2n$ parallel lines, the number of polytopes is upper bounded by $2n+1$. Second, suppose that all polytopes have at least two faces in $\gL$. Based on our previous argument, the total number of polytopes in this case is upper bounded by $\frac{2n(4n-2)}{2} = n(4n-2)$. Combining these two cases leads $\max\{n(4n-2),2n+1\} = \max\{n(4n-2),3\}$ polytopes. \qed
\end{proof}

Next, we explain our algorithm. To find an optimal solution, it is sufficient to first determine the sparsity pattern associated with each polytope defined by the lines in 
$\gL$. By Lemma 1, the number of such polytopes is bounded by $O(n^2)$, leading to at most $O(n^2)$ distinct sparsity patterns. Once these patterns are identified, an optimal solution can be determined by solving a convex quadratic program constrained by each identified sparsity pattern. 

Indeed, the facets of the polytope are included in the lines in $\gL$. Therefore, to identify candidate sparsity patterns within the polytopes, it suffices to determine them at these facets. For instance, to identify the sparsity pattern for all polytopes that are adjacent to line $\ell_i$ (equivalently, $\ell_i$ defines a facet), we consider all pairs $\gY_i = \{(y_1,y_2): \ell_i(y_1,y_2) = u_iy_1+v_iy_2 +c_i\pm\sqrt{2\lambda_i}= 0\}$. Without loss of generality, let us assume that $u_i\not=0$. Then, this implies that $y_1 = -(v_i/u_i)y_2-(c_i\pm\sqrt{2\lambda_i})/a_i$. For every $(y_1,y_2)\in \gY_i$, $z_j = 0, j\not=i$ if and only if 
\begin{align}
    &-\sqrt{2\lambda_j}\leq u_jy_1+v_jy_2+c_j\leq \sqrt{2\lambda_j} \notag\\
    \iff& -\sqrt{2\lambda_j}\leq \left(v_j-\frac{u_jv_i}{u_i}\right)y_2+c_j-u_j(c_i\pm\sqrt{2\lambda_i})\leq \sqrt{2\lambda_j}\notag\\
    \iff& \underbrace{\frac{-\sqrt{2\lambda_j}+u_j(c_i\pm\sqrt{2\lambda_i})-c_j}{\left(v_j-\frac{u_jv_i}{u_i}\right)}}_{\alpha_i}\leq y_2\leq \underbrace{\frac{\sqrt{2\lambda_j}+u_j(c_i\pm\sqrt{2\lambda_i})-c_j}{\left(v_j-\frac{u_jv_i}{u_i}\right)}}_{\beta_i}.\label{eq_intervals}
\end{align}
Combining and sorting the above upper and lower bounds yields $ 2n-1$ intervals for $ y_2$. Within each interval, the sparsity pattern of $\{z_j\}_{j\not=i}$ can be found and remains fixed. Therefore, on the line $\ell_i$, we obtain $2n-1$ sparsity patterns $\{\bar z^{(k)}\}_{k=1}^{2n-1}\subset \mathbb{R}^{n-1}$, which are the possible sparsity patterns for the variables $\{z_j\}_{j\not=i}$ on the polytopes adjacent to $\ell_i$. To find the full sparsity pattern for these polytopes, it remains to determine $z_i$, which can be either $z_i=0$ or $z_i=1$, depending on which side of $\ell_i$ the polytope resides in. This implies that the set of candidate sparsity patterns for the polytopes adjacent to $\ell_i$ can be defined as $\{z^{(2k-1)}, z^{(2k)}\}_{k=1}^{2n-1}$, where both $z^{(2k-1)}$ and $z^{(2k)}$ share the same sparsity pattern at coordinates $j\not=i$, and only differ at coordinate $i$. The runtime for finding these candidate sparsity patterns is $O(n\log n)$ (due to the sorting step).

Performing the above steps for all $\ell\in \gL$ yields all $O(n^2)$ candidate sparsity patterns. These sparsity patterns can be found in $O(n^2\log n)$ time. Moreover, for each fixed sparsity pattern, the optimal solution can be found in $O(n)$ by solving a convex quadratic problem, leading to $O(n^3)$ total complexity. \qed
\end{proof}

\subsection{Generalization to General \texorpdfstring{$m$}{m}}
\label{subsec:poly-algo-rankm}

The geometric pruning framework described for $m=2$ naturally extends to general fixed $m$. After the normalization above, with coupling matrix $V\in \mathbb{R}^{n\times m}$ satisfying $I-VV^\top\succ 0$, Problem~\eqref{problem:low_rank_general} reads
\[
\min_{x\in\mathbb{R}^n, z\in\{0,1\}^n, y\in\mathbb{R}^m} \frac{1}{2}\|y\|^2 + y^\top V^\top x + \frac{1}{2}x^\top x + c^\top x + \lambda^\top z, \quad x_i(1-z_i)=0.
\]

For fixed $y$, the problem separates across coordinates, giving piecewise functions
\[
g_i(y) = 
\begin{cases}
0 & \text{if } |V_i^\top y + c_i| \le \sqrt{2 \lambda_i},\\
-\frac{(V_i^\top y + c_i)^2}{2} + \lambda_i & \text{otherwise,}
\end{cases}
\]
where $V_i^\top$ denotes the $i$-th row of $V$.  
Each $g_i$ induces two parallel hyperplanes in $\mathbb{R}^m$,
\[
V_i^\top y + c_i = \pm \sqrt{2 \lambda_i}.
\]

The union of these $2n$ hyperplanes decomposes $\mathbb{R}^m$ into polytopes, inside each of which the sparsity pattern $z$ is fixed.  
To enumerate candidate sparsity patterns efficiently, each hyperplane can be treated as a facet, and one coordinate of $y$ can be parametrized in terms of the remaining $m-1$ coordinates.  
For instance, assume without loss of generality that $V_{i,1}\neq 0$, we can write
\[
y_1 = - \frac{1}{V_{i,1}} \Big( V_{i,2:m}^\top y_{2:m} + c_i \pm \sqrt{2 \lambda_i} \Big).
\]

For any $j\neq i$, the condition $z_j=0$ is equivalent to
\[
-\sqrt{2\lambda_j} \le V_j^\top y + c_j \le \sqrt{2\lambda_j}.
\]
Substituting the above expression for $y_1$ yields
\[
-\sqrt{2\lambda_j}
\le
\Big( V_{j,2:m} - \frac{V_{j,1}}{V_{i,1}} V_{i,2:m} \Big)^\top y_{2:m}
+ c_j - \frac{V_{j,1}}{V_{i,1}} (c_i \pm \sqrt{2\lambda_i})
\le
\sqrt{2\lambda_j}.
\]
Fixing all but one coordinate of $y_{2:m}$ (e.g., $y_3,\dots,y_m$), this inequality reduces to an interval constraint on the remaining coordinate, say $y_2$,
\[
\alpha_{ij}^{\pm} \le y_2 \le \beta_{ij}^{\pm},
\]
where $\alpha_{ij}^{\pm}$ and $\beta_{ij}^{\pm}$ are obtained by dividing the above bounds by the coefficient of $y_2$.

Sorting these interval boundaries partitions the remaining coordinates into regions with fixed partial sparsity patterns. Combining with the choice of $z_i$ on each side of the hyperplane yields all candidate sparsity patterns adjacent to this facet. Repeating this for all hyperplanes generates all $O(n^m)$ candidate sparsity patterns.

\begin{lemma}
\label{lem:polytopes-rankm}
The $2n$ hyperplanes in $\mathbb{R}^m$ decompose the space into at most $O(n^m)$ polytopes, each corresponding to a distinct candidate sparsity pattern for $z$.
\end{lemma}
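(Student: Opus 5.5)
The lemma has two parts: a count of the cells of the arrangement, and the claim that each cell carries a single sparsity pattern.

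\emph{Counting.} I would use the classical bound on the number of regions of an affine hyperplane arrangement. The arrangement consists of the $2n$ hyperplanes $H_i^{\pm}=\{y: V_i^\top y+c_i=\pm\sqrt{2\lambda_i}\}$. I would prove by induction on $(N,m)$ that $N$ affine hyperplanes in $\mathbb{R}^m$ create at most $\Phi(N,m):=\sum_{\ell=0}^{m}\binom{N}{\ell}$ regions.

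The base cases are $m=0$ (one region) and $N=0$ (one region). For the inductive step, add the $N$-th hyperplane $H$ to an arrangement of $N-1$ hyperplanes, which has at most $\Phi(N-1,m)$ regions. Each old region cut by $H$ splits into exactly two pieces, so the number of new regions equals the number of old regions that $H$ meets. That number is at most the number of regions of the induced arrangement on $H\cong\mathbb{R}^{m-1}$. The induced arrangement consists of the intersections $H\cap H'$; an intersection that is empty (parallel case) or equal to all of $H$ (coincident case) contributes no cut. Hence the induced arrangement has at most $\Phi(N-1,m-1)$ regions.

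Pascal's rule $\binom{N-1}{\ell}+\binom{N-1}{\ell-1}=\binom{N}{\ell}$ then gives $\Phi(N,m)=\Phi(N-1,m)+\Phi(N-1,m-1)$, which closes the induction. This is the same bound of \cite{zaslavsky1975facing} already invoked in the proof of Proposition~\ref{prop:poly_tree}, so the whole count could alternatively be cited directly. With $N=2n$ and $m$ fixed, $\Phi(2n,m)\le (m+1)(2n)^m=O(n^m)$. The cells are intersections of open half-spaces (or their closures), hence polyhedra; I would call them polytopes as the paper does, noting that they may be unbounded.

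\emph{One pattern per cell.} Fix a cell $C$. It is connected, and by definition no hyperplane $H_i^\pm$ meets its interior. Therefore the continuous functions $V_i^\top y+c_i\mp\sqrt{2\lambda_i}$ have constant sign on $C$. It follows that the predicate $|V_i^\top y+c_i|\le\sqrt{2\lambda_i}$, which determines $z_i$ through the piecewise definition of $g_i$, takes the same value for every $y\in C$. So $z$ is constant on $C$, and the map from cells to patterns is well defined.

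The statement says each cell corresponds to a \emph{distinct} pattern. Strictly, two different cells may carry the same $z$: the off-region of coordinate $i$ is a slab, but its on-region is the union of two half-spaces lying on opposite sides of that slab, and these receive the same $z_i=1$. I would therefore prove, and state explicitly, the form actually needed downstream: the number of distinct candidate patterns is at most the number of cells, hence $O(n^m)$. Points lying exactly on the hyperplanes satisfy both regime conditions with equality, since $g_i$ is continuous there. Such points are covered by the closures of the adjacent cells, so they introduce no new patterns.

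The main obstacle I expect is in the inductive step: arguing carefully that degenerate intersections (parallel or coincident hyperplanes, including the two parallel hyperplanes $H_i^+$ and $H_i^-$ of the same coordinate) only lower the count. I would handle this by the remark above, that empty or coincident intersections produce no cut of $H$.
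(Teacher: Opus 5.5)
Your proposal is correct and follows the paper's reasoning. The paper gives no separate proof of this lemma; it relies on the arrangement bound $\sum_{\ell=0}^{m}\binom{2n}{\ell}$ of \cite{zaslavsky1975facing} together with sign-constancy on each cell, exactly as in the proof of Proposition~\ref{prop:poly_tree}. Your deletion--restriction induction re-derives that cited bound. Your caveat about ``distinct'' is also correct: with binary $z$, the two cells on opposite sides of the slab $|V_i^\top y+c_i|\le\sqrt{2\lambda_i}$ can carry the same pattern. The weaker statement you prove, that the number of patterns is at most the number of cells, is all Corollary~\ref{cor:rankm} needs.
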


Once the candidate sparsity patterns are identified, the optimal solution can be found by solving a convex quadratic program for each pattern, leading to a total complexity of $O(n^{m+1})$.  
This generalizes the rank-2 algorithm while preserving the same geometric pruning principle.

\begin{corollary}
\label{cor:rankm}
For any fixed $m$, Problem~\eqref{problem:low_rank_general} with $G\succ 0$ can be solved in $O(n^{m+1})$ time and memory using the geometric pruning algorithm described above.
\end{corollary}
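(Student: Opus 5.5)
The plan is to reduce Corollary~\ref{cor:rankm} to the elimination of $(x,z)$ for fixed $y$, followed by enumeration of the cells of a hyperplane arrangement in $\mathbb{R}^m$. I would work with the normalized form ($D=I_n$, $G=I_m$, $b=0$, coupling matrix $V$ with $I-VV^\top\succeq 0$) described at the start of this appendix. First, project out $(x,z)$ coordinate-wise exactly as in the rank-2 proof. For fixed $y$, the problem separates, and the CORe threshold computation of Section~\ref{subsec:bipartite_qp} gives
\[
\frac12\|y\|^2+\sum_i g_i(y),
\]
where $g_i$ is $0$ on the slab $|V_i^\top y+c_i|\le\sqrt{2\lambda_i}$ and equals $\lambda_i-\frac12(V_i^\top y+c_i)^2$ outside it. Hence an optimal $y^\star$ lies in the closure of some cell of the arrangement of the $2n$ hyperplanes $V_i^\top y+c_i=\pm\sqrt{2\lambda_i}$. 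On that closed cell the optimal $z$ pattern is the one the cell dictates; on boundaries, ties are harmless because both branches agree there. Therefore, for each cell, fixing its pattern $z$ and solving the resulting convex QP in $(x_S,y)$ recovers the global optimum as the best value over all cells. This is the same argument as in Proposition~\ref{prop:poly_sim}(ii).

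Second, bound the number of cells by Zaslavsky's formula: there are at most $\sum_{\ell\le m}\binom{2n}{\ell}=O(n^m)$ cells. This is Lemma~\ref{lem:polytopes-rankm}, which I would prove directly by citing \cite{zaslavsky1975facing}.

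Third, and this is the main obstacle, enumerate one representative pattern per cell within $O(n^{m+1})$ time. I would use induction on $m$ via the facet parametrization sketched above. Every bounded or unbounded cell, except in the degenerate all-parallel case (handled by a one-dimensional sort), has a facet on some hyperplane $H$. Restricting the other $2n-1$ hyperplanes to $H\cong\mathbb{R}^{m-1}$ gives an $(m-1)$-dimensional arrangement whose cells list the partial patterns on $H$. Each such cell, extended by $z_i\in\{0,1\}$ according to the side of $H$, yields every cell adjacent to $H$. Recursing down to $m=1$ (sorting $O(n)$ breakpoints) enumerates $O(n^{m-1})$ facet cells per hyperplane, giving $O(n^m)$ patterns in total, possibly with duplicates, each produced with $O(n)$ bookkeeping. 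Care is needed here: the naive ``fix all but one coordinate'' description in the text is not literally exhaustive, so I would state the recursion on genuine sub-arrangements, and treat hyperplanes parallel to $H$ as either constant patterns or empty intersections.

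Finally, count the cost. Each pattern requires solving a QP with $z$ fixed. Eliminating $x_S=-(V_Sy+c_S)$ leaves an $m$-dimensional positive-definite system in $y$, because $I-V_S^\top V_S\succeq I-V^\top V$ and is nonsingular when $G\succ0$. Forming this system costs $O(nm^2)=O(n)$ and solving it costs $O(1)$ for fixed $m$. With $O(n^m)$ patterns the total time is $O(n^{m+1})$, and the memory is of the same order because all patterns are stored. I expect the enumeration step, in particular making the recursion and its complexity rigorous together with degeneracy handling, to be where most of the effort goes.
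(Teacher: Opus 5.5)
Your proposal is correct and follows the same route as the paper. You eliminate $(x,z)$ for fixed $y$, bound the cells of the $2n$-hyperplane arrangement by $O(n^m)$ via Zaslavsky (Lemma~\ref{lem:polytopes-rankm}), enumerate patterns by facet parametrization, and solve one $O(n)$ convex QP per pattern. You add more care than the paper's two-line count on the cell reduction and on making the enumeration recursion exhaustive.

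One small slip: nonsingularity of $I-V_S^\top V_S$ does not follow from $G\succ 0$. For example, $n=m=1$ with $D=G=F=1$ gives $I-V^\top V=0$. Nonsingularity comes from the standing assumption $I-VV^\top\succ 0$ imposed in Section~\ref{subsec:poly-algo-rankm}, and you should cite that assumption instead.
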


\begin{proof}
By Lemma~\ref{lem:polytopes-rankm}, the number of candidate sparsity patterns is at most $O(n^m)$. For each fixed pattern, the convex quadratic problem in $x$ can be solved in $O(n)$ time. Therefore, enumerating all patterns and solving the corresponding convex optimization problems yields a total complexity of $O(n^{m+1})$. \qed
\end{proof}

\end{document}

%% file: math_commands.tex
\usepackage{amsmath,amsfonts,bm}

\newcommand*{\rom}[1]{%
\textup{\uppercase\expandafter{\romannumeral#1}}%
}

\def\1{\bm{1}}

\DeclareMathAlphabet{\mathsfit}{\encodingdefault}{\sfdefault}{m}{sl}
\SetMathAlphabet{\mathsfit}{bold}{\encodingdefault}{\sfdefault}{bx}{n}

\def\gC{{\mathcal{C}}}

\def\gL{{\mathcal{L}}}

\def\gY{{\mathcal{Y}}}
\def\gZ{{\mathcal{Z}}}

\newcommand{\ignore}[1]{}

\newtheorem{assumption}{Assumption}